\documentclass[11pt,reqno]{amsart}
\usepackage[shortlabels]{enumitem}
\usepackage{amssymb,commath,amscd}
\usepackage{txfonts}
\usepackage{amsthm}
\usepackage{mathrsfs}
\usepackage{thmtools}
\usepackage{amsmath}
\usepackage{
}
\usepackage[colorlinks,linkcolor=black,anchorcolor=blue,citecolor=green]{hyperref}
\usepackage[all]{xy}

\declaretheorem[numberwithin=section]{theorem}
\declaretheorem[sibling=theorem]{lemma}
\declaretheorem[sibling=theorem]{corollary}
\declaretheorem[sibling=theorem]{proposition}

\declaretheorem[sibling=theorem, style=remark]{remark}

\declaretheorem[sibling=theorem, style=remark]{definition}
\numberwithin{equation}{section}
\allowdisplaybreaks


\newcommand{\injrad}{\operatorname{inj.rad}}
\newcommand{\conjrad}{\operatorname{conj.rad}}
\newcommand{\Ric}{\operatorname{Ric}}
\newcommand{\diam}{\operatorname{diam}}
\newcommand{\vol}{\operatorname{vol}}

\newcommand{\op}[1]{\operatorname{#1}}

\begin{document}
	\title[Convergence of Ricci-limit spaces] {Convergence of Ricci-limit spaces under bounded Ricci curvature and local covering geometry I}
	
	
	\author{Zuohai Jiang}
	\address{Beijing international center for mathematical research, peking university, Beijing, China}
	\email{jiangzuohai08@pku.edu.cn}
	
	
	\author{Lingling Kong}
	\address{School of Mathematical and statistical,  Northeast Normal Universiy, Changchun China}
	\curraddr{}
	\email{kongll111@nenu.edu.cn}
	\thanks{The second author would like to thank Capital Normal University for
		a warm hospitality during his visit}
	\author{Shicheng Xu}
	\address{School of Mathematical Sciences, Capital Normal Universiy, Beijing China}
	\address{Academy for Multidisciplinary Studies, Capital Normal University, Beijing  China}
	\curraddr{}
	\email{shichengxu@gmail.com}

	\subjclass[2010]{53C23, 53C21, 53C20, 5324}
	
	\date{\today}
	\begin{abstract}
		We extend Cheeger-Gromov's and Anderson's convergence theorems to regular limit spaces of manifolds with bounded Ricci curvature and local covering geometry, by establishing the $C^{1,\alpha}$-regularities that are the best one may expect on those Ricci-limit spaces. As an application we prove an optimal generalization of Fukaya's fibration theorem on collapsed manifolds with bounded Ricci curvature, which also improves the original version to $C^{1,\alpha}$ limit spaces.
	\end{abstract}
	
	\maketitle
	
	\setcounter{section}{-1}
	\section{Introduction}\label{section-0}
	
One of the fundamental tools in the study of geometry and topology of manifolds with curvature bound is the
Cheeger-Gromov's convergence theorem \cite{Cheeger1970,GLP1981}, which implies that
the space $\mathcal{M}^n(v,D)$ of all closed Riemannian $n$-manifolds of sectional curvature $|\sec|\leq 1$, volume $\geq v>0$ and diameter $\leq D<+\infty$ is precompact in the $C^{1,\alpha}$-topology (cf. also \cite{Peters1987,GW1988,Kasue1989}). This $C^{1,\alpha}$-convergence theorem had been generalized by Anderson \cite{Anderson1990} (cf. \cite{Gao1990}) and Anderson-Cheeger \cite{AnCh1992} to Riemannian manifolds with (lower) bounded Ricci curvature. That is, the space $\mathcal{M}_{|Ric|}^n(i_0,D)$ (resp. $\mathcal{M}_{Ric}^n(i_0,D)$) consisting of all closed Riemannian $n$-manifolds of Ricci curvature $|\Ric|\leq n-1$ (resp. $\Ric\ge -(n-1)$), injectivity radius $\geq i_0>0$ and diameter $\leq D<+\infty$ is precompact in the $C^{1,\alpha}$-topology (resp. $C^{0,\alpha}$-topology). A weaker replacement of the injectivity radius is \emph{$(\delta,\rho)$-Reifenberg condition} \cite{CC1997I} (cf. \cite{HKRX2020}) for $0\le \delta\leq \delta(n)$,
i.e., around each point $x$ in such a manifold the Gromov-Hausdorff distance
\begin{equation}\label{def-Reifenberg-*}
d_{GH}\left(B_{r}(x), B_{r}^{n}(0)\right)\le \delta \cdot r, \qquad \text{for all $0<r\le \rho$},
\end{equation} where $B_{r}^{n}(0)$ is the $r$-ball at the origin in $\mathbb R^n$, $\delta(n)$ is a constant depending only on $n$, and $\rho$ is a small positive constant. A point satisfying \eqref{def-Reifenberg-*} is called a $(\delta,\rho)$-Reifenberg point.
By Anderson \cite{Anderson1990} and Colding \cite{Colding1997}, it was well known that Anderson's convergence theorem still holds after replacing the lower bound of injectivity radius by the $(\delta,\rho)$-Reifenberg condition.
\par	
By the convergence theorems above, a limit space of manifolds in $\mathcal{M}^n_{Ric}(i_0,D)$ (resp. $\mathcal{M}^n_{|Ric|}(i_0,D)$) in the Gromov-Hausdorff topology is always a smooth manifold with a $C^{\alpha}$-Riemannian metric (resp. $C^{1,\alpha}$-Riemannian metric).
However, without a positive injectivity radius bound a Ricci limit space may contain non-Euclidean points. A collapsed Ricci limit space may even have non integer Hausdorff dimension, see \cite{PanWei2021}.
The geometric properties of Ricci limit spaces had played a fundamental role in solving some open problems and conjectures on manifolds with (lower) Ricci curvature bound; see for example \cite{CC1996}, \cite{CC1997I}, \cite{KW}.
\par	
A Riemannian manifold $(M,g)$ with normalized curvature bound is called $\epsilon$-collapsed, if the volume of any unit ball $B_{1}(x)$ at $x\in M$ is less than $\epsilon$. As the counterpart of Cheeger-Gromov's convergence theorem, $\epsilon$-collapsed Riemannian manifolds with bounded sectional curvature had been extensively studied by Cheeger-Fukaya-Gromov \cite{CFG1992} (cf. Cheeger-Gromov \cite{CheegerGromov1990, CheegerGromov1990II}, and
Fukaya \cite{Fukaya1987, Fukaya1988, Fukaya1989}).
In contrast, the geometry and topology of collapsed manifolds with bounded Ricci curvature are much more complicated and rarely known at present. Many progresses on the collapsed manifolds with lower bounded Ricci curvature are achieved recently under some additional geometric assumptions, such as bounded local covering geometry, see \cite{Rong2018}, \cite{CRX2019,CRX2017}, \cite{NaberZhang2016}, \cite{HKRX2020}, \cite{Huang2020}, \cite{HW2020-1,HW2020-2}, etc. and the survey paper \cite{HRW2020}.

According to \cite{HKRX2020} (cf. \cite{CGT1982,Cheeger-Gromov1985}), a complete Riemannian $n$-manifold $(M,g)$ with $|\Ric_M|\leq n-1$ is said to have \emph{$(r,v)$-local covering geometry}, if for any $x\in M$, the local rewinding volume of $B_r(x)$, $\op{Vol}(B_r(\tilde x))\ge vr^n>0$, where $\tilde x$ is a preimage point of $x$ in the (incomplete) Riemannian universal cover $\pi: (\widetilde{B_{r}(x)},\tilde x)\to (B_{r}(x),x)$.  Moreover, $(M,g)$ is said to have \emph{$(\delta,r)$-Reifenberg local covering geometry}, if for any $x\in M$, $\tilde x$ is a $(\delta,r)$-Reifenberg point. By Cheeger-Fukaya-Gromov \cite[Theorem 1.3]{CFG1992}, for any $\delta>0$, there is $r(n,\delta)>0$ such that any collapsed complete manifold with sectional curvature bound $|\sec|\le 1$ admits $(\delta,r(n,\delta))$-Reifenberg local covering geometry.

In this paper, we prove the $C^{1,\alpha}$-regularity of those regular limit spaces under bounded Ricci curvature and local covering geometry, which naturally extends the above Cheeger-Gromov's and Anderson's convergence theorems.

Let $\mathcal{X}_{n,r,v}^m(\delta,\rho)$ be the set consisting of all compact Ricci-limit spaces of Riemannian $n$-manifolds with $|\Ric_M|\leq n-1$ and $(r,v)$-local covering geometry, such that the dimension of $X$ is $m$ in the sense of Colding-Naber \cite{ColdingNaber2011}, and any point $x$ in $X\in \mathcal{X}_{n,r,v}^{m}(\delta,\rho)$ is $(\delta,\rho)$-Reifenberg.

\begin{theorem}[bounded local covering geometry]\label{thm-bounded-local-covering-geometry}
	Given $r,v,\rho>0$ and positive integers $m\leq n$, there are constants $\delta=\delta(n),T(n,r,v,\rho)>0$ such that the followings hold.
	\numberwithin{enumi}{theorem}
	\begin{enumerate}[leftmargin=0pt,itemindent=*]
		\item \label{full-rank1} Any $X\in\mathcal{X}_{n,r,v}^m(\delta,\rho)$ is a $C^{1,\alpha}$-Riemannian manifold $(X,h)$ with a positive $C^{1,\alpha}$-harmonic radius $\ge r_0(n,r,v,\rho,\alpha)>0$ for any $0<\alpha<1$.  And for each $0<\epsilon\leq T(n,r,v,\rho)$, there is a nearby Riemannian metric $h(\epsilon)$ on $(X,h)$ such that
		\begin{enumerate}[(a)]
			\item\label{full-rank1a} $\|h(\epsilon)-h\|_{C^{1,\alpha}}\le \Psi(\epsilon\,|\,n,r,v,\rho,\alpha)$, where $\Psi(\epsilon\,|\,n,r,v,\rho,\alpha)\to 0$ as $\epsilon\to 0$ with other parameters $n,\dots, \alpha$ fixed,
			\item\label{full-rank1b} the sectional curvature of $h(\epsilon)$ satisfies $|\sec_{h(\epsilon)}|\le C(n,r,v,\rho)\epsilon^{-\frac{1}{2}}$,
			\item\label{full-rank1c} the $k^{\text{th}}$-derivative of curvature tensor $\left|\nabla^{k}\operatorname{Rm}(h(\epsilon))\right|_{h(\epsilon)}\leq C(n,r,v,\rho,k,\epsilon),$
		\end{enumerate}
		where the constants $r_0(n,r,\dots)$ and $C(n,r,\dots)$ depend only on the given parameters.
		\item \label{full-rank2}
		The subset $\mathcal{X}_{n,r,v}^m(\delta,\rho,D)=\{X\in \mathcal{X}_{n,r,v}^m(\delta,\rho) : \diam(X)\le D\}$ is compact in the $C^{1,\alpha}$-topology. In particular, $\mathcal{X}_{n,r,v}^m(\delta,\rho,D)$ contains only finitely many diffeomorphism types.
	\end{enumerate}
\end{theorem}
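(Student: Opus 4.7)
The plan is to reduce to Anderson's non-collapsed $C^{1,\alpha}$-convergence theorem on local universal covers and then descend the regularity to $X$ using the isometric action of the limit pseudo-group of local deck transformations, with the Reifenberg condition on $X$ ruling out singular orbits. Write $X$ as the Gromov-Hausdorff limit of manifolds $M_i\in\mathcal{M}_{|\Ric|}^n$ with $(r,v)$-local covering geometry, and pick $x_i\to x\in X$. The incomplete covers $\pi_i\colon(\widetilde{B_r(x_i)},\tilde x_i)\to(B_r(x_i),x_i)$ satisfy $|\Ric|\le n-1$ and $\vol(B_r(\tilde x_i))\ge vr^n$, i.e., they are non-collapsed with bounded Ricci; by Anderson's theorem a subsequence converges in $C^{1,\alpha}$ to a $C^{1,\alpha}$-Riemannian $n$-manifold $(\widetilde Y,\tilde y)$ with harmonic radius $\ge r_0(n,r,v,\alpha)>0$.

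The deck pseudo-groups $\Gamma_i$ act by isometries, and an equivariant Gromov-Hausdorff limit yields a limit pseudo-group $\Gamma$ acting isometrically on $\widetilde Y$ whose orbit space is identified with $B_{r/2}(x)\subset X$. Combined with the non-collapsed $C^{1,\alpha}$-structure on $\widetilde Y$, the Reifenberg condition at $x$ forces the identity component $\Gamma^0$ to be a connected nilpotent Lie group whose orbits foliate $\widetilde Y$ regularly with fibers of constant dimension $n-m$. Choosing $\Gamma^0$-invariant harmonic coordinates on $\widetilde Y$ adapted to the foliation, in the spirit of the Cheeger-Fukaya-Gromov nilpotent Killing structure, produces harmonic coordinates on $B_{r/2}(x)$ that realize the asserted $C^{1,\alpha}$-harmonic radius $\ge r_0(n,r,v,\rho,\alpha)$ on $X$. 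To obtain $h(\epsilon)$, mollify the limit metric on $\widetilde Y$ at scale $\epsilon$ in harmonic coordinates and average over the $\Gamma$-action so the result descends to $X$; standard embedding-smoothing arguments of Abresch and Dai-Wei-Ye then give (a), the interpolation $\|\op{Rm}\|_\infty\lesssim \epsilon^{-1/2}$ after smoothing (starting from the $L^\infty$-bound on $\op{Ric}$) gives (b), and the smoothness of the convolution kernel gives (c). The second part is then standard: the uniform harmonic radius on $\mathcal{X}_{n,r,v}^m(\delta,\rho,D)$ produces a bounded atlas of harmonic charts with $C^{1,\alpha}$-bounded transitions, yielding precompactness; closedness holds because the bounded-Ricci, local covering, and Reifenberg conditions all pass to GH-limits; and finiteness of diffeomorphism types follows from the bounded cardinality of such atlases.

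The main obstacle will be the descent step: upgrading equivariant Gromov-Hausdorff convergence of the covers to the statement that the limit pseudo-group acts on $\widetilde Y$ with regular $(n-m)$-dimensional orbits, and that $\Gamma^0$-invariant harmonic coordinates on $\widetilde Y$ descend to honest $C^{1,\alpha}$-harmonic coordinates on $X$. Without the Reifenberg hypothesis on $X$, the quotient could have singular orbit stratification and only be $C^{1,\alpha}$ off a lower-dimensional set; making the Reifenberg-to-regular-orbit implication quantitative and propagating it to the full $C^{1,\alpha}$ bound on $X$ is where the bulk of the technical work will lie.
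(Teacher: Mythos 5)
Your overall skeleton (pass to local covers, use Anderson's non-collapsed convergence there, descend through the limit isometric action, then smooth) does follow the paper's strategy, but several of your key steps have genuine gaps. First, you work with the incomplete universal covers $\widetilde{B_r(x_i)}$ and assert a convergent subsequence; universal covers of balls need not be precompact in the pointed Gromov--Hausdorff topology (this is exactly why the paper replaces them by the normal covers $\widehat{B}(x_i,r/2,r)$ and invokes the precompactness principle of \cite{Xu2021}, Theorem \ref{thm-precompactness-a}). Second, the limit cover is only a $C^{1,\alpha}$-Riemannian manifold, which may not even possess an exponential map, so the standard theory of isometric actions (slice theorem, principal orbit theorem, regularity of the orbit foliation) that your ``Reifenberg forces regular $(n-m)$-dimensional orbits'' step relies on cannot be applied directly; the paper has to smooth upstairs by the Ricci flow, transfer the \emph{same} deck-group action to the smooth limits $Y_t$, and only then run the orbit-space argument (Lemma \ref{local-regularity-limit}, Proposition \ref{prop-C1alpha-regularity}). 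Third, ``$\Gamma^0$-invariant harmonic coordinates on $\widetilde Y$ produce harmonic coordinates on $X$'' is not correct as stated: a fiberwise-constant harmonic function pushes down to a function that is only \emph{almost} harmonic (the mean curvature of the fibers enters), and averaging coordinates over the group destroys harmonicity. The paper produces the adapted chart by lifting Cheeger--Colding $\delta$-splitting maps from $M_i$ (so fiberwise constancy is automatic), controls the Hessian, and then must correct the descended chart by solving a Dirichlet problem with Schauder estimates (Lemma \ref{almost-C1-harmonic-coordinate-chart-on-X} plus Theorem \ref{tech-theorem-*}); this correction step is missing from your plan.

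Your route to the smoothed metrics $h(\epsilon)$ also fails at item (\ref{full-rank1}.b). Mollifying a $C^{1,\alpha}$ metric in harmonic coordinates at scale $\epsilon$ (the Abresch/PWY-type smoothings) only yields $|\sec_{h_\epsilon}|\le C(r,m,\epsilon)$, not the bound $C\epsilon^{-1/2}$, and there is no $L^\infty$ Ricci bound on the limit metric $h$ from which to ``interpolate'': $h$ is merely $C^{1,\alpha}$. In the paper the $\epsilon^{-1/2}$ bound comes from running the Dai--Wei--Ye Ricci flow on the approximating manifolds $(M_i,g_i)$, passing the estimates \eqref{ineq-smoothing-sec} to the limit $\hat h(t)$ on the cover, and then transferring them to the quotient metric $h(t)$ through the Riemannian submersion \emph{inside harmonic coordinates} (O'Neill only gives the lower curvature bound; the uniform harmonic radius on the quotient is what makes the upper bound descend). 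So while the compactness statement (\ref{full-rank2}) would indeed follow routinely once a uniform harmonic radius is in hand, the heart of the theorem --- producing that harmonic radius on a merely $C^{1,\alpha}$ quotient and the sharp curvature scale of $h(\epsilon)$ --- is not reached by the argument you outline.
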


Recently Naber and Zhang \cite{NaberZhang2016} proved the $\epsilon$-regularity theorem for locally full-rank collapsed Riemannian manifolds with (lower) bounded Ricci curvature, which says that there are uniform constants $\epsilon=\epsilon(n),r(n,\alpha)>0$ such that for any complete Riemannan $n$-manifold $M$ with $|\Ric_{M}|\le n-1$ and $0<\rho\le 1$, if an open ball $B_{\rho}(x)$ is $\epsilon \rho$-Gromov-Hausdorff close to a $\rho$-ball in a lower dimensional Euclidean space $\mathbb R^m$, then the subgroup $\Gamma_{\epsilon, \rho}$ generated by loops at $x$ of length $< 2\epsilon\rho$ in the fundamental group $\pi_1(B_\rho(x),x)$ has the full rank $n-m$ if and only if the preimages of $x$ in the universal cover of $B_{\rho/2}(x)$ admit a uniform $C^{1,\alpha}$-harmonic radius $\ge r(n,\alpha)\rho>0$. Recall that by the generalized Margulis lemma \cite{KW}, the local fundamental group $\Gamma_{\epsilon, \rho}$
contains a nilpotent subgroup $N$ of finite index $\le \omega(n)$, where the rank of $\Gamma_{\epsilon, \rho}$ is defined to be that of $N$, which is no more than $n-m$ (see e.g., \cite[Theorem 2.27]{NaberZhang2016}).

Let $\mathcal{Y}_n^{m}(\delta,\rho)$ be the set consisting of all compact Ricci-limit spaces of locally full-rank collapsed manifolds with $|\operatorname{Ric}|\le n-1$, such that the dimension of $X\in\mathcal{Y}^{m}_n(\delta,\rho)$ is $m$, and any point in $X$ is $(\delta,\rho)$-Reifenberg. By Naber-Zhang's $\epsilon$-regularity, $\mathcal Y_n^m(\delta,\rho)\subset \mathcal{X}_{n,r,v}^{m}(\delta,\rho)$ for some $r=r(n,\rho)$ and $v>0$.

\begin{corollary}[locally full-rank collapsed limit spaces]\label{cor-full-rank-collapsed-Ricci-limit-space-*}
	 Let $\delta=\delta(n)>0$ be the constant in Theorem \ref{thm-bounded-local-covering-geometry}.
	 The conclusions in Theorem \ref{thm-bounded-local-covering-geometry} hold for $\mathcal{Y}_n^{m}(\delta,\rho)$, where the dependence of constants $T, C$ and $r_0$ on $n,r,v,\rho$ can be simplified to $n$ and $\rho$ only.
\end{corollary}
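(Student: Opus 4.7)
The plan is to deduce the corollary directly from Theorem \ref{thm-bounded-local-covering-geometry} by verifying the inclusion $\mathcal{Y}_n^m(\delta,\rho)\subset \mathcal{X}_{n,r,v}^m(\delta,\rho)$ with $r=r(n,\rho)$ and $v=v(n)$, a reduction already alluded to in the paragraph preceding the statement.

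First, I would fix $X\in\mathcal{Y}_n^m(\delta,\rho)$ together with an approximating sequence $M_i\xrightarrow{GH} X$ of locally full-rank collapsed $n$-manifolds with $|\Ric_{M_i}|\le n-1$. For any $x\in X$ with approximants $x_i\in M_i$, the global $(\delta,\rho)$-Reifenberg condition on $X$ passes (after shrinking $\rho$ to $\min\{\rho,1\}$ if necessary) to the statement that $B_\rho(x_i)$ is $\epsilon(n)\rho$-Gromov-Hausdorff close to a $\rho$-ball in $\mathbb R^m$ for all large $i$; combined with the full-rank assumption on $\Gamma_{\epsilon(n),\rho}(x_i)$, this places me squarely in the hypotheses of Naber-Zhang's $\epsilon$-regularity \cite{NaberZhang2016}, producing a lift $\tilde x_i$ in the universal cover of $B_{\rho/2}(x_i)$ with $C^{1,\alpha}$-harmonic radius $\ge r(n,\alpha)\rho$. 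Standard volume comparison on the harmonic chart then yields a uniform rewinding volume bound $\op{Vol}(B_s(\tilde x_i))\ge v_0(n)\,s^n$ for all $s\le r_0(n,\rho)$, so every $M_i$ has $(r_0(n,\rho),v_0(n))$-local covering geometry. Hence $X\in \mathcal{X}_{n,r,v}^m(\delta,\rho)$ with $r:=r_0(n,\rho)$ and $v:=v_0(n)$, and Theorem \ref{thm-bounded-local-covering-geometry} then applies as-is: it delivers the $C^{1,\alpha}$-Riemannian structure with uniform lower harmonic radius, the smoothing family $h(\epsilon)$ satisfying (\ref{full-rank1a})--(\ref{full-rank1c}), and the $C^{1,\alpha}$-compactness of the subset with $\diam\le D$. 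Since the output constants $T(n,r,v,\rho)$, $C(n,r,v,\rho)$, $r_0(n,r,v,\rho,\alpha)$ receive inputs $r$ and $v$ themselves determined by $(n,\rho)$, their dependences collapse to $(n,\rho)$ alone, as claimed.

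The one substantive point is the uniformity just above: I must verify that Naber-Zhang's $\epsilon$-regularity applies at \emph{every} point of every approximating $M_i$ (not merely at a generic regular point), so that the harmonic-radius lower bound, and with it the $(r,v)$-local covering geometry hypothesis of Theorem \ref{thm-bounded-local-covering-geometry}, is realized uniformly in $x_i\in M_i$. Once this uniformity is in place, the corollary follows without any new estimates beyond those already available in \cite{NaberZhang2016} and Theorem \ref{thm-bounded-local-covering-geometry}.
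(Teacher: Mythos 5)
Your proof is correct and follows the same route the paper intends: the paper gives no separate proof of Corollary 0.2 beyond the one-sentence observation preceding it (``By Naber-Zhang's $\epsilon$-regularity, $\mathcal Y_n^m(\delta,\rho)\subset \mathcal{X}_{n,r,v}^{m}(\delta,\rho)$ for some $r=r(n,\rho)$ and $v>0$''), and your argument is a faithful expansion of exactly that inclusion followed by an application of Theorem~\ref{thm-bounded-local-covering-geometry}. The details you supply (passing the Reifenberg condition from $X$ to the approximants $M_i$ so Naber-Zhang's hypotheses hold at every point, then converting the uniform harmonic-radius bound on the local covers into a rewinding-volume lower bound via volume comparison in harmonic charts) are the right ones and resolve the ``substantive point'' you flag.
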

\par

Since all closed manifolds in $\mathcal{M}_{|Ric|}^m(i_0,D)$ are contained in $\mathcal{Y}^m_n(\delta(n),\rho,D)$ for some $\rho=\rho(n,i_0)$,
Theorem \ref{thm-bounded-local-covering-geometry} generalizes Anderson's $C^{1,\alpha}$-convergence theorem.

Let us recall that in general, a positive lower volume bound for manifolds with bounded Ricci curvature and diameter is weaker than
a positive injectivity radius bound, though they are equivalent under sectional curvature bound.
Similar to the Anderson's \cite{Anderson1990} and Anderson-Cheeger's \cite{AnCh1992} convergence theorems, Theorem \ref{thm-bounded-local-covering-geometry} fails after loosing the $(\delta,\rho)$-Reifenberg condition to a positive volume lower bound on limit spaces; it shares the same counterexamples as those for Anderson's convergence (see \cite{Anderson1990}).

Next, we will generalize the Cheeger-Gromov's convergence to limit spaces of manifolds with bounded Ricci curvature and  Reifenberg-bounded local covering geometry, where the Reifenberg condition (resp. positive injectivity radius) on the limit spaces is replaced by a positive lower bound of the $m$-Hausdorff measure (resp. the volume).

Let $\mathcal{Z}_{n,\delta,r}^m(\tau)$ be the set consisting of all compact Ricci-limit spaces of Riemannian $n$-manifolds with $|\Ric_M|\leq n-1$ and $(\delta,r)$-Reifenberg local covering geometry, such that each element $X\in \mathcal{Z}_{n,\delta,r}^{m}(\tau)$ is $\tau$-almost regular in the sense that for any $x\in X$, any tangent cone $(T_xX,x)$ at $x$ is $\tau$-close to $(\mathbb{R}^{m},0)$ in the pointed Gromov-Hausdorff topolgy. 

\begin{theorem}[Reifenberg-bounded local covering geometry]\label{thm-Reifenberg-local-covering-geometry}
	Let $\delta=\delta(n)>0$ be the constant in Theorem \ref{thm-bounded-local-covering-geometry}.
	\begin{enumerate}[leftmargin=0pt,itemindent=*]
		\item\label{local-covering-geometry-1} Any $X\in \mathcal{Z}_{n,\delta,r}^{m}(\delta)$ is a $C^{1,\alpha}$-Riemannian manifold $(X,h)$ such that for any point $x\in X$, $\op{Vol}(B_1(x))\ge w>0$ implies that the $C^{1,\alpha}$-harmonic radius at $x$ is no less than $r_h(n,r,w,\alpha)>0$ for any $0<\alpha<1$.
		\item\label{local-covering-geometry-2} Let $\mathcal{Z}_{n,\delta,r}^{m}(\delta,w,D)=\{X\in\mathcal{Z}_{n,\delta,r}^{m}(\delta): \op{diam}(X)\le D, \op{Vol}(X)\ge w>0\}$. There is $\rho=\rho(n,r,w,D)>0$ such that $\mathcal{Z}_{n,\delta,r}^{m}(\delta,w,D)\subset \mathcal Y_{n}^m(\delta,\rho,D)$.
		
		In particular,
		the conclusions in Theorem \ref{thm-bounded-local-covering-geometry} hold for $\mathcal{Z}_{n,\delta,r}^{m}(\delta,w,D)$.
	\end{enumerate}
\end{theorem}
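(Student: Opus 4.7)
The plan is to reduce Theorem \ref{thm-Reifenberg-local-covering-geometry} to Theorem \ref{thm-bounded-local-covering-geometry} by converting Reifenberg local covering into bounded local covering, and then upgrading $\delta$-almost regularity of $X$ to a uniform Reifenberg scale using the volume lower bound. The key observation enabling the reduction is that a $(\delta(n),r)$-Reifenberg point $\tilde x$ on an $n$-manifold with $|\Ric|\le n-1$ has, by Colding's volume convergence, $\op{Vol}(B_r(\tilde x))\ge (1-\Psi(\delta\,|\,n))\omega_n r^n\ge v(n)r^n$. Hence each approximating manifold $M_i$ of $X\in\mathcal{Z}^m_{n,\delta,r}(\delta)$ automatically has $(r,v(n))$-local covering geometry, and by Naber-Zhang's $\epsilon$-regularity it is locally full-rank at scale $r$.

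For part \eqref{local-covering-geometry-1}, fix $x\in X$ with $\op{Vol}(B_1(x))\ge w$. I would pass to the equivariant GH limit of the rewound balls $(\widetilde{B_r(x_i)},\tilde x_i,\Gamma_i)$ in the sense of Fukaya-Yamaguchi, obtaining $(\widetilde{B_r(x)},\tilde x,\Gamma)$. Since the Reifenberg condition is closed under pointed GH convergence and the dimension is preserved under the bounded local covering, $\tilde x$ remains $(\delta,r)$-Reifenberg in the $n$-dimensional limit; Anderson's theorem then produces harmonic coordinates of radius $\ge r_0(n,\alpha)r$ at $\tilde x$. Pushing this chart down by the covering $\pi:\widetilde{B_r(x)}\to B_r(x)$ yields $C^{1,\alpha}$-harmonic coordinates at $x$ on the scale on which $\pi$ is injective, and this injectivity scale is controlled below by $w$ via the quotient volume identity: a short $\Gamma$-orbit at $\tilde x$ would force $\mathcal H^m(B_1(x))$ to be small, contradicting $\op{Vol}(B_1(x))\ge w$.

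For part \eqref{local-covering-geometry-2}, after the first part it suffices to produce, for every $X\in \mathcal{Z}^m_{n,\delta,r}(\delta,w,D)$, a uniform scale $\rho=\rho(n,r,w,D)$ at which every point is $(\delta,\rho)$-Reifenberg; then Naber-Zhang's $\epsilon$-regularity applied to the approximating $M_{k,i}$ realizes $X$ as a locally full-rank collapsed Ricci-limit, i.e.\ as an element of $\mathcal Y^m_n(\delta,\rho,D)$. The uniform scale will be extracted by contradiction and precompactness: if it failed, there would exist $X_k\in\mathcal{Z}^m_{n,\delta,r}(\delta,w,D)$, $y_k\in X_k$, and $\rho_k\to 0$ with $d_{GH}(B_{\rho_k}(y_k),B^m_{\rho_k}(0))>\delta\rho_k$. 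Using a diagonal argument on the doubly-indexed approximating family and the uniform control on universal covers established above, I would pass to the pointed limit $(X_k,\rho_k^{-1}d,y_k)\to (C,o)$; since the rescaled covering scale $r/\rho_k\to \infty$ while the Ricci bound rescales to zero, $C$ is isometric to $\mathbb R^m$ by Naber-Zhang together with almost regularity at the GH-limit point $y_\infty\in X_\infty$, contradicting the Reifenberg failure that is inherited by $(C,o)$ at unit scale.

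The main obstacle will be the diagonal limit argument in part \eqref{local-covering-geometry-2}: one must simultaneously blow up, pass to $k\to\infty$, and preserve the equivariant structure on the rewound balls, while verifying that $\mathcal{Z}^m_{n,\delta,r}(\delta,w,D)$ is closed under GH limits so that the limit $X_\infty$ still enjoys the volume lower bound and $\delta$-almost regularity. This closedness relies on the lower semicontinuity of $\mathcal H^m$ for $\delta$-almost regular Ricci-limits (as in Colding-Naber) and on the stability of the Reifenberg covering scale $r$, which is uniform and therefore survives GH limits of the approximating manifolds.
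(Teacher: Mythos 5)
Your high-level plan (reduce to Theorem \ref{thm-bounded-local-covering-geometry} and extract a uniform regularity scale from the volume lower bound by a contradiction/blow-up) is in the right spirit, but both crucial steps have genuine gaps. In part \eqref{local-covering-geometry-1}, you push the Anderson chart on the limit of the rewound balls down ``on the scale on which $\pi$ is injective'' and claim this scale is controlled by $w$ because a short $\Gamma$-orbit would make $\mathcal H^m(B_1(x))$ small. This only works in the non-collapsed case $m=n$. In the collapsed case the limit group $G$ of the deck transformations in \eqref{eqGH-local-covers} is a positive-dimensional Lie group, $Y$ is $n$-dimensional while $X$ is $m$-dimensional, and $\pi_\infty:Y\to X$ is a submersion with positive-dimensional fibers: it is injective at no scale whatsoever, and arbitrarily short orbit displacements coexist with $\mathcal H^m(B_1(x))\ge w$ (e.g.\ $S^1(\epsilon_i)\times N^m$ collapsing to $N^m$). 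So charts cannot simply be ``pushed down''; one needs the adapted harmonic coordinates built from $\delta$-splitting maps and Theorem \ref{tech-theorem-*} (as in Propositions \ref{prop-C1alpha-regularity} and \ref{prop-adapted-harmonic-coordinates}), and, more importantly, a proof that the volume bound forces a definite regularity scale at $x$, which your argument never supplies.

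That missing mechanism is exactly where the paper's work lies: it argues by contradiction, smooths the approximating manifolds by the Ricci flow (Theorem \ref{thm-smoothing-ricci-flow}), shows the smoothed quotients carry two-sided sectional curvature bounds via the Fukaya-type estimate behind Theorem \ref{thm-limit-bounded-sec} (Lemma \ref{lem-curvature-group-action-*}), converts the volume lower bound into an injectivity radius bound by Cheeger--Gromov--Taylor, and uses Cheeger--Gromov compactness plus Lemma \ref{lem-Euclidean-quotient} to conclude the limit of the bad sequence is regular, whence a definite harmonic radius transfers back and gives the contradiction. Your part \eqref{local-covering-geometry-2} has the same hole in a different guise: after blowing up at $y_k$ with scales $\rho_k\to 0$ you assert the limit $(C,o)$ is isometric to $\mathbb R^m$ ``by Naber--Zhang together with almost regularity at $y_\infty$,'' but $\delta$-almost regularity of the $X_k$ (or of their limit) only gives $\delta$-closeness of tangent cones, not isometry, and the blow-up of $X_k$ at moving points and scales is not a tangent cone of the limit; without the smoothing/non-collapsing argument above (or some substitute) there is no reason the blow-up is Euclidean, so the contradiction with the Reifenberg failure at scale $\rho_k$ does not materialize. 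Once a uniform harmonic radius is in hand, your use of Lemma \ref{lem-standard-blow-up} and Naber--Zhang's $\epsilon$-regularity to place $X$ in $\mathcal Y_n^m(\delta,\rho,D)$ is consistent with how the paper concludes, but the core analytic input is missing from your proposal.
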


 Theorem \ref{thm-Reifenberg-local-covering-geometry}  generalizes the local estimate \cite{CGT1982}, \cite{ChLiYau} on the injectivity radius of manifolds with bounded sectional curvature to harmonic radius of limit spaces under bounded Ricci curvature and Reifenberg  local covering geometry.

We point out that the Theorem \ref{thm-Reifenberg-local-covering-geometry} fails for regular Ricci-limit spaces under bounded Ricci curvature and $(r,v)$-local covering geometry in the sense of rewinding volume, since it would contains all non-collapsed Ricci-flat manifolds.
	
\begin{remark}\label{rem-main-1}
	For those regular limit spaces of collapsed manifolds with two-sided bounded sectional curvature, the $C^{1,\alpha}$-regularity and compactness in Theorems \ref{thm-bounded-local-covering-geometry}-\ref{thm-Reifenberg-local-covering-geometry} are well-known to experts and can be easily derived by \cite{Fukaya1988}. In fact, they are direct corollaries of Cheeger-Gromov's convergence theorem, because those limit spaces of Hausdorff dimension $m$ can be smoothed to metrics whose sectional curvature is bounded two-sided uniformly by $C(n,w,D)$, where $\diam(X)\le D$, and $m$-Hausdorff measure $H^m(X)\ge w>0$; for details see Section 2, and also compare \cite[Theorem 0.9]{Fukaya1988} and \cite[Corollary 0.11]{Fukaya1988}, which states $X$ is a smooth manifold with a continuous metric tensor $h$ inducing a $C^{1,\alpha}$ distance function. 	
	
	However, the smoothed metrics on limit spaces of manifolds with bounded Ricci curvature generally admit no uniformly bounded sectional curvature.
	In order to derive the $C^{1,\alpha}$-convergence,
	one has to construct $C^{1,\alpha}$-harmonic coordinates directly on a limit space. This is the new ingredient in Theorem \ref{thm-bounded-local-covering-geometry}.
	
	For those non-collapsed $n$-manifolds with $|\Ric_M|\leq n-1$ and $(\delta,r)$-Reifenberg local covering geometry, the $C^{1,\alpha}$-precompactness of non-collapsed has been proved in \cite[Theorem E]{CRX2017}.
	
	It should be pointed out that, though a limit space $X$ in Theorems  \ref{thm-bounded-local-covering-geometry}-\ref{thm-Reifenberg-local-covering-geometry} admits the sythetic $CD(-(n-1), n)$ curvature condition (cf. \cite{GKMS2018}) or Bakry-\'Emery-Ricci curvature lower bound in a generalized sense by \cite{Lott2003}, their weighted measures cannot be used to detect how much $X$ is collapsed as in Theorem \ref{thm-Reifenberg-local-covering-geometry}. On the other hand, for the original $m$-Hausdorff measure on $X$, we do not know whether the volume comparison is satisfied; compare \cite{Pro-Wilhelm2014}.
\end{remark}
\begin{remark}\label{rem-main-2}
	It is well known by \cite{PWY1999} that once $X$ admits a positive $C^{1,\alpha}$-harmonic radius $r>0$, it can be smoothed to a new metric $h_\epsilon$, whose sectional curvature $|\op{sec}_{h_\epsilon}|\le C(r,m,\epsilon)$. The smoothed metric $h(\epsilon)$ in Theorem \ref{thm-bounded-local-covering-geometry} has a better order (\ref{full-rank1}.b-c), which arises from the Ricci flow solutions $g(\epsilon)$ on such manifolds, where $h(\epsilon)$ is their limit metric under Gromov-Hausdorff topology.
	
	Indeed, by \cite{HKRX2020} (see also Lemma \ref{lem-rewinding-reifenberg} below), for any $  X\in \mathcal{X}_{n,r,v}^m(\delta,\rho),$ the manifolds in its converging sequence with bounded Ricci curvature and $(r,v)$-local covering geometry  satisfy $(\delta,r')$-Reifenberg local covering geometry for some $r'=r'(n,r,v,\rho)>0$. Then by Dai-Wei-Ye \cite{DWY1996} (see also Theorem \ref{thm-smoothing-ricci-flow} below), the solution $g(t)$ of Ricci flow equation with initial value $g$ exists in $(0,T(n,r')]$ for some constant $T(n,r')>0$, and satisfies (\ref{thm-bounded-local-covering-geometry}.1.a-c) on the local universal cover for $0<\epsilon=t\le T(n,r')$. Hence any limit space in $\mathcal{X}_{n,r,v}^{m}(\delta,r')$ admits a nearby metric $h(t)$ that locally is a quotient orbit space of manifolds with bounded sectional curvature $C(n,r')t^{-1/2}$.
	
	By elementary facts on Riemannian submersions (e.g., see the proof of Lemma \ref{almost-C1-harmonic-coordinate-chart-on-X} below), the curvature condition on $g(t)$ naturally passes to the quotient metric $h(t)$ in a harmonic coordinate chart (see the diagram \eqref{eqGH-local-covers} below). Note that, though the lower curvature bound can be always passed to $h(t)$ by the O'Neill's formula, the radius of harmonic coordinates on the quotient is crucial for the upper curvature bound.
	There are limit spaces, e.g. \cite[\S 1-e, Example 1.13]{Fukaya1986}, whose sectional curvature blows up as the volume goes to zero.
\end{remark}

\par
We now give an application of Theorem \ref{thm-bounded-local-covering-geometry}. As a parametrized version of Gromov's almost flat manifold theorem (\cite{Gromov1978}, \cite{Ruh1982}), Fukaya \cite{Fukaya1987} constructed a bundle structure whose fibers absorb all collapsing directions on a manifold $M$ that is Gromov-Haussdorff close to a lower dimensional manifold under bounded sectional curvature.

\begin{theorem}[Fukaya's fibration theorem \cite{Fukaya1987,CFG1992}]\label{thm-fukaya}
	Given constants $n\ge2$, $1\ge i_0>0$, there are constants $\epsilon(n)>0$ and $C(n)>0$ such that the following holds.
	\par		
	Let $(M,g)$ and $(N,h)$ be a closed Riemannian $n$-manifold and $m ( \leq n )$-manifold respectively, whose sectional curvature and injectivity radius satisfy
	\begin{equation*}
	|\sec_{(M,g)}|\leq 1,\quad |\sec_{(N,h)}|\leq 1,\quad \injrad(N,h)\geq i_0.
	\end{equation*}
	If $d_{GH}(M,N)\le \epsilon\cdot i_0$ with $\epsilon< \epsilon(n)$, then there is a $C^\infty$-smooth fibration $f:M\to N$ such that
	\begin{enumerate}\numberwithin{enumi}{theorem}
		\item\label{thm-fukaya-1} $f$ is a $\varkappa(\epsilon\,|\,n)$-almost Riemannian submersion, i.e., for any vector $\xi$ perpendicular to an $f$-fiber,
		$e^{-\varkappa(\epsilon\,|\,n)}|\xi|_{g}\le |d f(\xi)|_h\leq e^{\varkappa(\epsilon\,|\,n)}|\xi|_{g},$
		where after fixing $n$, $\varkappa(\epsilon\,|\,n)\to 0$ as $\epsilon\to 0$.
		\item\label{thm-fukaya-2} The intrinsic diameter of any $f$-fiber $F_{x}=f^{-1}(x)$ over $x\in N$ satisfies $\op{diam}_{g}(F_x) \le C(n)\cdot d_{GH}(M,N).$
		\item\label{thm-fukaya-3} The second fundamental form is bounded by
		$\left|\nabla^2f\right|\leq C(n)i_0^{-1}.$
		\item\label{thm-fukaya-4} $F_x$ is diffeomorphic to an infra-nilmanifold.
	\end{enumerate}
\end{theorem}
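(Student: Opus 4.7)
The plan is to build $f : M \to N$ by smoothing a Gromov--Hausdorff approximation via the Riemannian center-of-mass technique in local harmonic charts on $N$, then to verify the geometric properties using standard comparison arguments under the bounded sectional curvature hypothesis. First, fix an $\epsilon_0 := \epsilon\, i_0$-Gromov--Hausdorff approximation $\varphi : M \to N$, and cover $N$ by balls $\{B_{i_0/4}(x_j)\}_{j\in J}$ of multiplicity uniformly bounded by $N(n)$. The bounds $|\sec_N|\le 1$ and $\injrad(N,h)\ge i_0$ provide harmonic coordinate charts $\psi_j : B_{i_0/2}(x_j) \to \mathbb R^m$ with $C^{1,\alpha}$-controlled metric components at scale $i_0$. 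For each $p \in M$ with $\varphi(p) \in B_{i_0/4}(x_j)$, I would form the local smoothed map $\tilde f_j(p)$ as the Riemannian center of mass on $N$ of the image set $\{\varphi(q) : q \in B_r(p) \subset M\}$, with radius $r = c(n)\sqrt{\epsilon_0\, i_0}$. Since $|\sec_M|\le 1$ and the image set is small relative to $i_0$, the center of mass is well defined and $\tilde f_j$ is $C^\infty$. Choosing a partition of unity $\{\chi_j\}$ subordinate to the cover, I would glue via the implicit formula
\[
f(p) \;:=\; \arg\min_{y\in N}\sum_{j\in J}\chi_j(\varphi(p))\,d_N(\tilde f_j(p), y)^2,
\]
to obtain a global smooth $f : M \to N$.

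To prove \ref{thm-fukaya-1}--\ref{thm-fukaya-3}, observe that on a ball of radius $r$ in $M$, the composition $\psi_j\circ\varphi$ differs from an isometric projection $\mathbb R^n \to \mathbb R^m$ by $O(\epsilon_0)$ in $C^0$. Averaging against the smooth Riemannian volume on $B_r(p)$ produces a Jacobian which is $\varkappa(\epsilon\,|\,n)$-close to an orthogonal projection onto an $m$-plane, giving \ref{thm-fukaya-1}. Differentiating the implicit center-of-mass formula twice introduces the Hessian of $d_N^2$, whose norm is bounded by $C(n) i_0^{-1}$ under $|\sec_N|\le 1$, yielding \ref{thm-fukaya-3}. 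The fiber diameter bound \ref{thm-fukaya-2} then follows because any $p,p'\in F_x = f^{-1}(x)$ satisfy $d_N(\varphi(p),\varphi(p')) \le C(n)\epsilon_0$, and \ref{thm-fukaya-1} upgrades this horizontal estimate to an intrinsic one along the fiber. For \ref{thm-fukaya-4}, each closed fiber $F_x$ inherits $|\sec|\le C(n)$ via the O'Neill formula applied to the almost submersion $f$, and $\op{diam}(F_x) \le C(n)\epsilon_0 \le C(n)\epsilon(n)$ can be made smaller than Gromov's almost flat threshold $\eta(n-m)/\sqrt{C(n)}$ by choosing $\epsilon(n)$ small, so Gromov's almost flat manifold theorem identifies $F_x$ with an infra-nilmanifold.

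The hardest step is obtaining the sharp $|\nabla^2 f|\le C(n) i_0^{-1}$ bound independent of $\epsilon$: a naive Euclidean convolution of $\psi_j\circ\varphi$ produces a second derivative of order $r^{-1}\sim \epsilon_0^{-1/2}$, not $i_0^{-1}$, so the averaging radius must be balanced precisely so that fluctuations of $\varphi$ cancel to leading order in a Jacobi-field expansion of $d_N^2$. Equally delicate is performing the partition-of-unity gluing in the center-of-mass sense on $N$ rather than in coordinates, since the transition functions $\psi_j\circ\psi_k^{-1}$ contribute Hessians of order $i_0^{-1}$ that must combine coherently. This is essentially the content of the Cheeger--Fukaya--Gromov smoothing scheme; the main technical work is verifying that each smoothing and gluing step respects the almost Riemannian submersion property uniformly across the cover.
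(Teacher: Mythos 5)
There is a genuine gap, and it sits exactly at the heart of the theorem. First, a regularity problem in your construction: a Gromov--Hausdorff approximation $\varphi:M\to N$ is in general not even continuous, and your averaging radius $r=c(n)\sqrt{\epsilon_0 i_0}$ is far below the injectivity radius of the collapsed manifold $M$, so the weight $q\mapsto \chi\bigl(d_M(p,q)/r\bigr)$ is only Lipschitz in $p$; a center of mass of $\varphi$ taken with such weights is at best Lipschitz, not $C^\infty$, and your gluing weights $\chi_j(\varphi(p))$ are not even continuous in $p$, so the implicit-minimization formula does not define a smooth map. Second, and more seriously, the sharp estimates \eqref{thm-fukaya-2} and \eqref{thm-fukaya-3} are asserted rather than proved. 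You yourself note that the naive bound from averaging at scale $r$ is $|\nabla^2 f|\lesssim r^{-1}\sim \epsilon_0^{-1/2}$, and the claim that ``fluctuations of $\varphi$ cancel to leading order in a Jacobi-field expansion'' is precisely the statement one must prove; without it there is no $C(n)i_0^{-1}$ bound, and this bound is what distinguishes Fukaya's theorem from the (much easier) existence of some almost Riemannian submersion. Your derivation of \eqref{thm-fukaya-2} is also not valid: knowing $d_N(\varphi(p),\varphi(p'))\le C(n)\epsilon_0$ for $p,p'$ in one fiber says nothing about the intrinsic diameter of the fiber; the standard argument (cf. the proof of (2.6.1) in \cite{CFG1992}, reproduced in Step 3 of the proof of Proposition \ref{prop-C2alpha-fibration}) is a covering/counting argument that uses \eqref{thm-fukaya-1}, the second fundamental form bound and volume comparison. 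Finally, for \eqref{thm-fukaya-4} the fiber curvature bound comes from the Gauss equation together with $|\nabla^2 f|\le C(n)i_0^{-1}$, not from O'Neill's formula (which controls the base, not the fibers); the subsequent appeal to Gromov's almost flat theorem is fine once that is fixed.

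For comparison: this paper does not reprove Theorem \ref{thm-fukaya} but quotes it from \cite{Fukaya1987,CFG1992} (with \cite{Rong2019} for removing the $A$-regularity dependence); its own analogous construction is Proposition \ref{prop-C2alpha-fibration}, and the way it avoids exactly your difficulties is instructive. The local maps are not averages of the GH approximation but harmonic $\delta$-splitting maps (Dirichlet solutions approximating Busemann functions) whose lifts to the local covers sit inside $C^{1,\alpha}$-harmonic coordinate charts; smoothness and the uniform Hessian bound \eqref{local-fibration-C2} then come from $C^{2,\alpha}$-compactness of these charts at the fixed scale $\tau(n)$, not from any cancellation in an averaging procedure. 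The gluing weights are composed with the smooth local fibrations (not with $\varphi$), and the center of mass is taken with respect to a smoothed metric $h(t_0)$ with two-sided curvature bounds, so that differentiating the implicit equation twice only involves controlled Hessians of $d^2$ and of the local maps. If you want to salvage your scheme, you essentially have to replace the raw average of $\varphi$ by smooth local maps with a priori second-derivative bounds at a definite scale (harmonic splitting maps, or the embeddings into function spaces used by Fukaya, or the frame-bundle argument of \cite{CFG1992}); the balance between \eqref{thm-fukaya-1} and \eqref{thm-fukaya-3} cannot be obtained by tuning the averaging radius alone.
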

\begin{remark}\label{rem-fukaya-fibration}
	The formulation of Theorem \ref{thm-fukaya} is similar to \cite[Theorem 2.6]{CFG1992}, where the estimates are better than its original versions \cite{Fukaya1987,Fukaya1989}, but depend on the higher regularities of $g$ and $h$, called $A$-regular in \cite{CFG1992}, i.e., the curvature tensor satisfies $|\nabla^i \operatorname{Rm}|\le A_i$ for all integer $i\ge 0$. It is well-known that the dependence of  $\{A_i\}_{i\ge 1}$ in (\ref{thm-fukaya}.1-3) can be removed in several ways, e.g. see \cite{Rong2019} for a simple proof, and also Theorem \ref{thm-fibration-of-C1-manifolds} below.
\end{remark}

The last main result in this paper is an optimal generalization of Fukaya's fibration theorem on collapsed manifolds under bounded Ricci curvature. Let $\delta(n)>0$ be the constant in Theorem \ref{thm-bounded-local-covering-geometry}.
\begin{theorem}\label{thm-fibration-of-C1-manifolds}
	Given $\rho>0$ and positive integers $n, m(\leq n)$, there exist constants $\epsilon(n), C(n)$ such that the following holds.
	\par		
	Let $(M,g)$ be a closed Riemannian $n$-manifold
	with $|\Ric_M|\leq n-1$ and $(\delta(n),\rho)$-Reifenberg local covering geometry, and $(X,h)\in \mathcal{Y}_{n}^{m}(\delta(n),\rho)$ for $0<\rho\le 1$.
	If $d_{GH}(M,X)\le \epsilon \cdot \rho$ with $\epsilon<\epsilon(n)$, then there is a $C^\infty$-smooth fibration $f:M\to X$ that satisfies \eqref{thm-fukaya-1}-\eqref{thm-fukaya-4} after replacing $i_0$ with $\rho$.
\end{theorem}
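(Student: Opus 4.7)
The plan is to reduce Theorem \ref{thm-fibration-of-C1-manifolds} to Fukaya's fibration theorem by smoothing both $(M,g)$ and $(X,h)$ to nearby Riemannian metrics with sectional curvature bounded only in terms of $n$ and $\rho$, and then transferring the resulting smooth fibration back to the original pair via $C^{1,\alpha}$-stability. Since $(X,h)\in\mathcal{Y}_n^m(\delta(n),\rho)$, Corollary \ref{cor-full-rank-collapsed-Ricci-limit-space-*} furnishes, for each $0<t\le T(n,\rho)$, a nearby metric $h(t)$ on $X$ with $\|h(t)-h\|_{C^{1,\alpha}}\le \Psi(t\,|\,n,\rho,\alpha)$, $|\sec_{h(t)}|\le C(n,\rho)t^{-1/2}$ and higher-derivative bounds. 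The Reifenberg local covering hypothesis on $(M,g)$ allows the parallel Dai-Wei-Ye Ricci flow smoothing recalled in Remark \ref{rem-main-2} to produce $g(t)$ on $M$ with analogous closeness and curvature bounds. Fixing once and for all $t_0=\tfrac{1}{2}T(n,\rho)$ makes the smoothed metrics $g(t_0)$ and $h(t_0)$ satisfy $|\sec|\le C(n,\rho)$ and remain $C^{1,\alpha}$-close to $g$ and $h$ by universal amounts depending only on $n$ and $\rho$.

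Next I would establish that the Reifenberg scale $\rho$ controls the effective geometric scale of the smoothed metrics. The rescaling $(\cdot)\mapsto \rho^{-2}(\cdot)$ turns $|\Ric|\le n-1$ and $(\delta,\rho)$-Reifenberg into $|\Ric|\le \rho^2(n-1)\le n-1$ and $(\delta,1)$-Reifenberg (using $\rho\le 1$); thus Theorem \ref{thm-bounded-local-covering-geometry} applied at unit scale yields a $C^{1,\alpha}$-harmonic radius $\ge r_0(n)$ in the rescaled metric, equivalently $\ge c(n)\rho$ in the original, and this passes to the smoothed metric. Combined with the bounded sectional curvature of $h(t_0)$ and the topological disk property of a Reifenberg ball for small $\delta(n)$, each $\rho$-ball of $(X,h(t_0))$ carries an injectivity radius bound of order $\rho$ and enjoys nearly-Euclidean geometry at this scale. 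The identical scaling argument applied on the local universal covers of $\rho$-balls of $(M,g(t_0))$ gives the matching unit-scale bounded geometry after a factor-$\rho$ rescaling.

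With these preparations, I would invoke the $A$-regular form of Fukaya's fibration theorem (Theorem \ref{thm-fukaya} via Cheeger-Fukaya-Gromov \cite{CFG1992}, at scale $\rho$) on $(M,g(t_0))$ and $(X,h(t_0))$. Choosing $\epsilon(n)$ below Fukaya's constant produces a smooth fibration $f:M\to X$ that is an almost Riemannian submersion for the smoothed metrics, with fiber diameter $\le C(n)\,d_{GH}(M,X)$, $|\nabla^2 f|\le C(n)\rho^{-1}$, and infra-nilmanifold fibers. The $C^{1,\alpha}$-closeness of $g(t_0),h(t_0)$ to $g,h$ then allows properties (\ref{thm-fukaya-1})-(\ref{thm-fukaya-3}) to transfer to the original pair after absorbing the perturbation into $\varkappa(\epsilon\,|\,n)$, while (\ref{thm-fukaya-4}) is purely topological.

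The main obstacle is that $(X,h(t_0))$ generally has no global injectivity radius lower bound of order $\rho$, because $X$ may itself collapse as an $m$-manifold at scales larger than $\rho$; consequently Fukaya's theorem cannot be applied globally as a black box, but only at the Reifenberg scale on each $\rho$-ball. I expect the local-to-global assembly of these local fibrations to be the technical core of the proof, carried out via center-of-mass averaging in the orthonormal frame bundle as in \cite{CFG1992}, or by working directly on the local universal covers where $\rho$ provides an effective unit of injectivity radius and the smoothed metrics satisfy the standard bounded-geometry hypotheses of the classical theory.
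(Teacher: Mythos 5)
There is a genuine gap at the step where you ``transfer'' the conclusions of Fukaya's theorem from the smoothed pair $(g(t_0),h(t_0))$ back to $(g,h)$. With $t_0=\tfrac12 T(n,\rho)$ \emph{fixed}, the smoothing errors are of definite size: Theorem \ref{thm-smoothing-ricci-flow} only gives $|g(t_0)-g|_g\le 4t_0$, and Theorem \ref{thm-bounded-local-covering-geometry}(\ref{full-rank1a}) only gives $\|h(t_0)-h\|_{C^{1,\alpha}}\le\Psi(t_0\,|\,\cdot)$; neither quantity tends to $0$ as $\epsilon\to 0$. Hence a map that is a $\varkappa(\epsilon\,|\,n)$-almost Riemannian submersion for $(g(t_0),h(t_0))$ is, for $(g,h)$, only an almost Riemannian submersion with error $\varkappa(\epsilon\,|\,n)+C(n,\rho)t_0$, and this fixed defect cannot be ``absorbed into $\varkappa(\epsilon\,|\,n)$'', so \eqref{thm-fukaya-1} is not obtained. (A related problem occurs already in the hypothesis: since $g(t_0)$ is only $e^{Ct_0}$-bi-Lipschitz to $g$, $d_{GH}((M,g(t_0)),(X,h(t_0)))$ may degrade by an amount proportional to $\operatorname{diam}$, so Fukaya's closeness assumption need not hold for the smoothed pair.) If instead you let $t_0=t_0(\epsilon)\to 0$ to repair \eqref{thm-fukaya-1}, then $|\sec_{g(t_0)}|,|\sec_{h(t_0)}|\sim t_0^{-1/2}$ blow up and Fukaya's bound yields $|\nabla^2 f|\lesssim t_0^{-1/2}$, destroying \eqref{thm-fukaya-3}. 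This is exactly the dilemma the paper isolates (see the remark following Theorem \ref{thm-fibration-of-C1-manifolds} and Remarks \ref{rem-local-almost-submersion}--\ref{rem-fibration-smooth}): no choice of $t_0$ makes the ``smooth, apply Fukaya, transfer back'' strategy produce \eqref{thm-fukaya-1} and \eqref{thm-fukaya-3} simultaneously.

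The paper's proof avoids this by never building the fibration from the smoothed metrics. In Proposition \ref{prop-C2alpha-fibration}, the local maps are $f_p=H_{\infty,\tau}^{-1}\circ u_{p,\tau}$, where $u_{p,\tau}$ is a harmonic $\delta$-splitting map for the \emph{original} metric at a fixed scale $\tau(n)$ and $H_{\infty,\tau}$ is the almost harmonic chart descending (Lemma \ref{almost-C1-harmonic-coordinate-chart-on-X}) from an adapted harmonic chart on the local normal cover; the $C^{2,\alpha}$-compactness of those charts and their closeness to the limit Riemannian submersion $\pi_\infty$ give simultaneously the $\varkappa(\epsilon\,|\,n)$-submersion property with respect to $(g,h)$ and the uniform Hessian bound. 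The smoothed metric $h(t_0)$ enters only as an auxiliary device (convexity radius for the center-of-mass gluing, and curvature bounds for the implicit-function estimates), after a further rescaling $\delta=\tau\sqrt{\Psi(\epsilon/\tau\,|\,n)}$, and the $C^\infty$ statement follows by replacing $H_{\infty,\tau}$ with its smooth approximation $H_{t,\infty,\tau}$. Your final paragraph correctly identifies the local-to-global assembly as the technical core, but deferring it to ``center-of-mass averaging in the frame bundle as in \cite{CFG1992}'' leaves precisely the part unproved where the original metric's lack of bounded sectional curvature (and $X$'s possible collapse above scale $\rho$) must be confronted.
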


Theorem \ref{thm-fibration-of-C1-manifolds} also holds for closed Riemannian $n$-manifolds with $|\Ric_M|\leq n-1$ and $(r,v)$-local covering geometry, because by Lemma \ref{lem-rewinding-reifenberg} it admits a uniform Reifenberg-bounded local covering geometry.

The existence of a fibration that is a $\varkappa(\epsilon\,|\,n)$-Gromov-Hausdorff approximation satisfying \eqref{thm-fukaya-1} and \eqref{thm-fukaya-4} was already well known (cf. \cite[Proposition 6.6]{NaberZhang2016}). In fact, after removing the upper Ricci curvature bound in Theorem \ref{thm-fibration-of-C1-manifolds}, a smooth fibration that is $\varkappa(\epsilon\,|\,n)$-Gromov-Hausdorff approximation was constructed in both \cite{HKRX2020} (by smoothing methods based on Perelman's pseudo-locality \cite{Perelman2002} for the Ricci flow) and \cite{Huang2020} (by gluing $\delta$-splitting maps together via center of mass), where the uniform regularity is H\"older. Such fibrations are also constructed between Alexandrov spaces \cite{Fujioka2021} recently.

What is new for the fibration in Theorem \ref{thm-fibration-of-C1-manifolds} is that, it provides the best possible regularity \eqref{thm-fukaya-3}, which is first known in the literature even for the case that $(X,h)$ is an Euclidean $1$-ball (cf. \cite[Proposition 6.6]{NaberZhang2016}).

\begin{remark}
	The fibration in Theorem \ref{thm-fibration-of-C1-manifolds} is constructed  via gluing the locally defined Cheeger-Colding's $\delta$-splitting maps together. However, the optimal regularities are not direct consequences of neither those smoothing methods (e.g., \cite{DWY1996} \cite{PWY1999}), nor  the Cheeger-Colding's $L^2$-estimates \cite{CC1996,CC1997I} on the $\delta$-splitting map. The subtle point is the balance between \eqref{thm-fukaya-1} and \eqref{thm-fukaya-3}. For example, if a fiber bundle $f_t$ is constructed with respect to a smoothed metric $g(t)$ by the earlier known methods, then $f_t:(M,g)\to (X,h)$ is a $\varkappa(\epsilon,t\,|\,n)$-almost Riemannian submersion depending also on $t$. In order to derive \eqref{thm-fukaya-1}, $t$ has to approach $0$, while $\op{sec}_{g(t)}$ and hence $\nabla^2f_t$ generally blows up as $t\to 0$. Similar issue also happens in applying Cheeger-Colding's $L^2$-estimates. Instead, we apply the $C^{2,\alpha}$-compactness of harmonic coordinate charts on the local covers, which is crucial in deriving the optimal regularities for Theorem \ref{thm-fibration-of-C1-manifolds}. For details, see Remarks \ref{rem-local-almost-submersion} and \ref{rem-fibration-smooth} below.
	
\end{remark}

\begin{remark}\label{rem-fibration-difference}
Compared with those earlier versions of the fibration Theorem in \cite{Fukaya1989}, \cite{CFG1992}, \cite{NaberZhang2016}, \cite{HKRX2020}, \cite{Huang2020}, etc., another improvement here is that, $(X,h)$ has only $C^{1,\alpha}$-regularity that may even admit no standard exponential map (see \cite{Hartman-Wintner1951}, cf. \cite{CalabiHartman1970}).  We will apply the center of mass technique with respect to a smoothed nearby metric  $h(t_0)$ offered by Theorem \ref{thm-bounded-local-covering-geometry}, which admits a convex radius depends on $t_0$, such that \eqref{thm-fukaya-1}-\eqref{thm-fukaya-4} are proved for the original metric $h$ with fixed $t_0$.
\end{remark}

\par
At the core of Theorem \ref{thm-bounded-local-covering-geometry} is the proof of the existence of harmonic coordinates, i.e., the charts for which the coordinate functions are harmonic functions on balls of a uniform size (depending only on the constants given), and uniform $C^{1,\alpha}$-norm estimates of the metric tensor in these coordinates \cite{Cheeger1970,JostKarcher1982}. The main ingredients are as follows.

Let $(M_i,g_i)$ be a sequence of Riemannian $n$-manifolds with $|\op{Ric}_{M_i}|\le n-1$ and $(r,v)$-local covering geometry that converges to $X\in \mathcal X_{n,r,v}^m(\delta,\rho)$ in the Gromov-Hausdorff topology. By Lemma \ref{lem-rewinding-reifenberg} and \cite{Anderson1990}, for $\delta=\delta(n)>0$, the $C^{1,\alpha}$-harmonic radius of $\tilde x_i$ in the universal cover of $B_{r}(x_i)$ admits a uniform lower bound $r_0(n,r,v,\rho)>0$. For simplicity we assume $r=\rho$.

According to the precompactness principle for domains with boundary \cite{Xu2021} (see Theorem \ref{thm-precompactness-a} below), there is a normal cover $\widehat{U}_i$ of $B_{\frac{\rho}{2}}(x_i, g_i)$ equipped with its length metric such that by passing to a subsequence, it converges equivariantly in the pointed Gromov-Hausdorff topology:
\begin{equation}\label{eqGH-local-covers}
\begin{CD}
(\widehat{U}_i, \hat{x}_i, \Gamma_i) @>GH>i\to \infty> (Y, \hat x_\infty, G)\\
@V\pi_iVV @V\pi_\infty VV\\
(B_{\frac{\rho}{2}}(x_i, g_i),x_i) @>GH>i\to \infty> Y/G,
\end{CD}
\end{equation}
where $\Gamma_i$ is the deck-transformation of $\Gamma_{\frac{1}{2},\rho}(x_i)$, $G$ is the limit group of $\Gamma_i$, and the quotient $Y/G$ is locally isometric to the limit ball $B_{\frac{\rho}{2}}(x_\infty)$ on $X$.
Moreover, by the definition of $\widehat{U}_i$ (see Remark \ref{rem-Reifenberg-normal-cover} below), it still admits a uniform $C^{1,\alpha}$-harmonic radius lower bound. Hence $Y$ is a $C^{1,\alpha}$-Riemannian manifold.

In order to present the idea shortly, we first assume that $Y$ is a smooth Riemannian manifold.
Since the tangent cone of $Y/G$ is the quotient space of $\mathbb R^n$, which is either isometric to $\mathbb R^m$ or definitely away from $\mathbb R^m$, the $(\delta,\rho)$-Reifenberg condition implies that $Y/G$ is regular. By the standard theory of isometric actions on Riemannian manifolds (e.g. \cite[\S 1]{Grove2000}), $Y/G$ is also a Riemannian manifold. Then we construct a harmonic coordinate on $X$ in the following two steps.

Step 1. Following Cheeger-Colding \cite{CC1996,CC1997I}, we construct a harmonic $\delta$-splitting map $\varphi_i:B_{\delta^{-1/4}\rho}(x_i, \delta^{-1}g_i)\to \mathbb R^m$. We lift $\varphi_i$ to a harmonic $\delta$-splitting map $\hat \varphi_i=\varphi_i\circ \pi_i$ on $\hat U_i$, and then by appending other harmonic functions, we complete it to a harmonic coordinate chart $(\hat \varphi_i, \hat \psi_i):B_{\delta^{-1/4}\rho}(\hat x_i, \delta^{-1}\hat g_i, \widehat{U}_i)\to \mathbb R^n$.

Step 2. By taking limit of $(\hat \varphi_i, \hat \psi_i)$, we get a harmonic coordinate chart $(\hat \varphi_\infty, \hat \psi_\infty)$ on $Y$ such that $\hat \varphi_\infty$ takes the same value on each $G$-orbit. Hence $\hat \varphi_\infty$ descends to a smooth map $\varphi_\infty$ on $X$. For simplicity such harmonic coordinate chart is called to be \emph{adapted} for a submersion $\pi$, i.e., each $y^j$ $(j=1,\dots, m)$ takes the same value along every $\pi$-fiber.

By the $C^{1,\alpha}$-precompactness on $\widehat{U}_i$ via harmonic coordinates, $(\hat \varphi_\infty, \hat \psi_\infty)$ admits a small Hessian up to a definite rescaling on the metric. By the technical result below, $\varphi_\infty$ gives rise to a harmonic coordinate chart of definite size on $X$.

\begin{theorem}\label{tech-theorem-*}
	Given any $r>0, 0<\alpha<1, 0<Q\le 10^{-2}$ and integers $n,m(\leq n)$, there is a constant $\tau(n,r,\alpha,Q)>0$ such that the following holds.
	\par	
	Let $\pi:(Y,g)\to (X,h)$ be a Riemannian submersion from a Riemannian $n$-manifold (may not complete) to  a Riemannian $m$-manifold. Suppose that there is an adapted $C^{1,\alpha}$-harmonic coordinate chart $(y^1,\dots, y^n): B_r(p)\to \mathbb R^n$ at $p\in Y$ with $(\alpha,Q)$-$C^{1,\alpha}$-control (see \eqref{def-hr-1}-\eqref{def-hr-2} below).
	If the Hessian of each adapted coordinate function satisfies
	\begin{equation}\label{Hessian-control-for-yj-*}
	\|\operatorname{Hess}y^j\|_{C^{0,\alpha}(B_{r}(p))}\leq \tau(n,r,\alpha,Q), \ \ j=1,\dots,m,
	\end{equation}
	where the $C^{0,\alpha}$-norm is taken in the coordinates $(y^{1},\dots,y^n)$,
	then there is a $C^{1,\alpha}$-harmonic coordinate chart $(x^1,\dots, x^m): B_{r/2}(\bar{p})\to \mathbb R^m$ at $\bar{p}=\pi(p)$ with $(\alpha,2Q)$-$C^{1,\alpha}$-control.
\end{theorem}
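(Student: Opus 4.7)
The natural candidate for the harmonic chart on $X$ is a harmonic correction of the descent of $y^1,\dots,y^m$ to $X$. The plan proceeds in four steps.

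First, I would define $\bar y^j\colon\pi(B_r(p))\to\mathbb R$ for $j=1,\dots,m$ by $\bar y^j\circ\pi=y^j$; this is well-defined by the adaptedness assumption. A horizontal geodesic lifting argument ($\pi$ is distance non-increasing, and horizontal lifts of geodesics of length $<r$ starting at $p$ are geodesics in $Y$ staying inside $B_r(p)$) gives $B_{3r/4}(\bar p)\subset\pi(B_r(p))$. Since $dy^j$ annihilates vertical vectors for $j\le m$, the $d\bar y^j$ are linearly independent, so $(\bar y^j)$ is a local chart near $\bar p$. Writing $\nabla y^j$ for the gradient on $Y$ (which is horizontal for $j\le m$), the Riemannian submersion property yields $d\pi(\nabla y^j)=\nabla\bar y^j$, hence the pointwise identity $h^{jk}(\bar q)=g^{jk}(p)$ for any $p\in\pi^{-1}(\bar q)$ and $j,k\le m$. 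In particular $g^{jk}$ depends only on $y^1,\dots,y^m$ for such $j,k$, which lets one read off $(\alpha,Q')$-$C^{1,\alpha}$-control for $h$ in the $(\bar y^j)$-chart from the corresponding control for $g$, with $Q'$ a universal multiple of $Q$.

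Second, the key pointwise computation: for horizontal basic lifts $\widetilde X_i=(\bar X_i)^\ast$, one verifies $\op{Hess}_g y^j(\widetilde X_1,\widetilde X_2)=\pi^\ast\op{Hess}_h\bar y^j(\bar X_1,\bar X_2)$, because $y^j=\bar y^j\circ\pi$ kills vertical vectors and the vertical part of $\nabla^Y_{\widetilde X_1}\widetilde X_2$ contributes nothing when applied to $y^j$. Expressing the horizontal lift in the $y$-chart as $(\partial/\partial\bar y^k)^\ast=\partial/\partial y^k-(g_{VV}^{-1}g_{VH})^a_k\,\partial/\partial y^a$ (sum over $a>m$), the components of $\op{Hess}_h\bar y^j$ in the $(\bar y^k)$-chart are linear combinations of components of $\op{Hess}_g y^j$ with coefficients bounded by the $(\alpha,Q)$-control. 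Consequently $\|\op{Hess}_h\bar y^j\|_{C^{0,\alpha}(B_{3r/4}(\bar p))}\le C(n,Q)\tau$, and in particular $\|\Delta_h\bar y^j\|_{C^{0,\alpha}}\le C'(n,Q)\tau$.

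Third, on the ball $B_{3r/4}(\bar p)$, I would solve the Dirichlet problem $\Delta_h x^j=0$ with $x^j|_{\partial B_{3r/4}(\bar p)}=\bar y^j$; existence follows from standard elliptic theory on a $C^{1,\alpha}$-Riemannian manifold. The difference $u^j:=x^j-\bar y^j$ satisfies $\Delta_h u^j=-\Delta_h\bar y^j$ with zero boundary data, and interior Schauder estimates applied on $B_{r/2}(\bar p)\Subset B_{3r/4}(\bar p)$ give $\|u^j\|_{C^{2,\alpha}(B_{r/2}(\bar p))}\le C(n,r,\alpha,Q)\tau$. Choosing $\tau=\tau(n,r,\alpha,Q)$ sufficiently small, the transition map $(\bar y^k)\mapsto(x^j)$ has Jacobian $C^{1,\alpha}$-close to the identity, so $(x^1,\dots,x^m)$ is a genuine coordinate chart on $B_{r/2}(\bar p)$; its components are harmonic by construction, and the metric $h$ expressed in the $(x^j)$-chart inherits $(\alpha,2Q)$-$C^{1,\alpha}$-control from that in the $(\bar y^k)$-chart.

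The delicate point I expect to be the main obstacle is the passage between Hölder norms in the two different coordinate systems. The pointwise identities $h^{jk}=g^{jk}$ and the analogous formula for Hessians are clean, but comparing $C^{1,\alpha}$-seminorms in the $(y^i)$-chart on $Y$ with those in the $(\bar y^j)$-chart on $X$ relies on the fiber-constancy of $g^{jk}$ for $j,k\le m$ (ensured by horizontality of $\nabla y^j$) together with careful chain-rule computations along horizontal lifts. Once this calibration is in place, horizontal geodesic lifting, Dirichlet existence with $C^{1,\alpha}$-coefficients, and interior Schauder estimates are standard, and the threshold $\tau(n,r,\alpha,Q)$ absorbing all universal constants is routine bookkeeping.
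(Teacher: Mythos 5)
Your proposal is correct and follows essentially the same route as the paper: descend the adapted coordinates via fiber-constancy, use the Riemannian-submersion identities ($h_{st}=g_{st}$ and the Hessian identity along horizontal lifts) to get $C^{1,\alpha}$-control of $h$ and smallness of $\Delta_h$ of the descended functions, then solve a Dirichlet problem with those functions as boundary data and use Schauder estimates to obtain a nearby genuinely harmonic chart with $(\alpha,2Q)$-control. The only cosmetic difference is that the paper poses the Dirichlet problem on a Euclidean coordinate ball $B^{m}_{3r/5}(0)\subset\Omega$ rather than on the metric ball $B_{3r/4}(\bar p)$, which sidesteps any discussion of boundary regularity of metric balls and makes existence and the Schauder step entirely standard.
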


Note that the limit space $Y$ of the normal covers of balls in \eqref{eqGH-local-covers} is only a $C^{1,\alpha}$-Riemannian manifold, and in general, a $C^{1,\alpha}$-Riemannian manifold may even admit no standard exponential map (see \cite{Hartman-Wintner1951}, cf. \cite{CalabiHartman1970}). In order to guarantee the arguments above, we will show that $\pi_\infty$ is still a smooth submersion between $C^{1,\alpha}$-Riemannian manifolds. This can be seen by applying the Ricci flow on $(M_i,g_i)$ to obtain smooth limits $Y_t$ and $X_t$, which share the same limit group action in \eqref{eqGH-local-covers}; see Proposition \ref{prop-C1alpha-regularity} below. Thus a harmonic coordinate chart on $X$ can be constructed as above.

The organization of this paper is as following. In section
\ref{preliminaries}, we will supply some notations and preliminary facts that will be used later. In section 2 we give a simple proof of the $C^{1,\alpha}$-compactness for limit spaces under bounded sectional curvature.
Section 3 is devoted to the proof of Theorem \ref{tech-theorem-*}.
In section 4, we shall prove that each element $X\in\mathcal{X}_{n,r,v}^m(\delta,\rho)$ is a smooth manifold with a $C^{1,\alpha}$-Riemannian metric.
In section 5, we will construct $C^{1,\alpha}$-harmonic coordinate charts on $X$, and complete the proof of Theorem \ref{thm-bounded-local-covering-geometry}.
Theorem \ref{thm-Reifenberg-local-covering-geometry} and Theorem \ref{thm-fibration-of-C1-manifolds} will be proved in section 6 and 7 respectively.

{\bf Acknowledgement.} The authors are grateful to Professor Xiaochun Rong for his interest and very helpful discussion on the results in this paper. Z. J. is supported by China Postdoctoral Science Foundation Grant No. 8206300494.
S. X. is supported in part by Beijing Natural Science Foundation Grant No. Z19003 and National Natural Science Foundation of China Grant No. 11871349.

\section{Preliminaries}\label{preliminaries}
In this section, we will supply some notations and basic results that will be used through the rest of the paper.
\subsection{$C^{1,\alpha}$-Convergence}
In this subsection, we will introduce the concepts such as the harmonic radius of a Riemannian manifold, $C^{1,\alpha}$-convergence of a sequence of Riemannian manifolds. After that, we will give the well-known Cheeger-Gromov's and Anderson's $C^{1,\alpha}$-convergence theorems \cite{Cheeger1970}, \cite{GLP1981}, \cite{Anderson1990}, \cite{AnCh1992} (cf. also \cite{Peters1987}, \cite{GW1988}, \cite{Kasue1989}, \cite{Gao1990}, \cite{HebeyHerzlich1997}).
\begin{definition}\label{harmonic-radius}
Given $\alpha\in (0,1)$ and $Q>0$. Let $(M,g)$ be a smooth $n$-manifold with a $C^{1,\alpha}$-Riemannian metric $g$. For any $q\in M$, we define the $C^{1,\alpha}$-harmonic radius at $q$ as the largest number $r_{h}=r_{h}(\alpha,Q)(q,g)$ such that on the geodesic ball $B_{r_{h}}(q,g)$ of radius $r_{h}$ centered at $q$, there is a harmonic coordinate chart $\varphi=(x^{1},\dots,x^{n}):B_{r_{h}}(q,g)\to \Omega\subset\mathbb{R}^{n}$ such that the metric tensor admits the following $(\alpha,Q)$-$C^{1,\alpha}$-control:
\begin{enumerate}
	\item\label{def-hr-1} $e^{-Q}\delta_{ij}\leq g_{ij}\leq e^{Q}\delta_{ij}$ as bilinear forms, where $g_{ij}=g\left(\frac{\partial}{\partial x^{i}},\frac{\partial}{\partial x^{j}}\right)$, for $i,j=1,\dots,n$, and $\delta_{ij}$ are the Kronecker symbols,
	\item\label{def-hr-2} $r_{h}^{1+\alpha}\left\|\partial_{k}g_{ij}\right\|_{C^{0,\alpha}(\Omega)}\leq e^{Q}$, which means
	$$\sum_{k=1}^{n}r_{h}\sup_{x\in \Omega}\left|\partial_{k}g_{ij}(x)\right|+\sum_{k=1}^{n}r_{h}^{1+\alpha}\sup_{y,z\in \Omega, y\neq z}\frac{\left|\partial_{k}g_{ij}(y)-\partial_{k}g_{ij}(z)\right|}{d_{g}(y,z)^{\alpha}}\leq e^{Q},$$
	holds for $\partial_{k}=\frac{\partial}{\partial x^{k}}$ and the distance $d_{g}$ associated with $g$.
\end{enumerate}

The harmonic radius $r_{h}(\alpha,Q)(M,g)$ of $(M,g)$ is now defined by $r_{h}(\alpha,Q)(M,g)=\inf_{q\in M}r_{h}(q,g)$. For simplicity we will omit $\alpha$ and $Q$ when there is no confusion.
\end{definition}

In general, the $C^{1,\alpha}$-norms of the components, $g_{ij}$, of metric $g$ in the coordinates $\{x_i\}_{i=1}^{n}$ are defined on the Euclidean domain $\Omega$. For convenience, we also denote the $C^{1,\alpha}$-norm of $g_{ij}$ on $\Omega=\varphi(B_{r_{h}}(q,g))$ by $\left\|g_{ij}\right\|_{C^{1,\alpha}(B_{r_{h}}(q,g))}$.

Note that, $g\left(\frac{\partial}{\partial x^{i}},\frac{\partial}{\partial x^{j}}\right)$ and  $\partial_{k}g_{ij}$ in \eqref{def-hr-1} and \eqref{def-hr-2} can be replaced equivalently with $g(\nabla x^i,\nabla x^j)$ and $\nabla x^{k}\tilde{g}_{ij}$ respectively, where $\nabla x^{k}$ is the gradient of the coordinate function $x^k$ with respect to metric $g$.

\begin{remark}\label{scaling-invariant-harmonic-radius}
By definition, $r_{h}(q,g_\delta)=\delta^{-1}r_{h}(q,g)$ for $g_\delta=\delta^{-2}g$.
\end{remark}

Now we give the concept of $C^{1,\alpha}$-convergence of a sequence of Riemannian manifolds.
\begin{definition}\label{ck-close-1}
Let $M$ be a closed smooth $n$-manifold. Let $g_{i}$ and $g$ be complete $C^{1,\alpha}$-smooth Riemannian metrics on $M$. We say that $g_{i}$ converges to $g$ in the sense of $C^{1,\alpha}$-norm if for any $p\in M$, there exists a coordinate chart around $p$, $\left(x^{1},\dots,x^{n}\right):U\to \Omega\subset\mathbb{R}^{n}$, such that $g_{i,st}=g_{i}\left(\frac{\partial}{\partial x^{s}},\frac{\partial}{\partial x^{t}}\right)$ $C^{1,\alpha}$-converges to $g_{st}=g\left(\frac{\partial}{\partial x^{s}},\frac{\partial}{\partial x^{t}}\right)$ as $i\to \infty$.
i.e.,
\begin{equation}
\left\|g_{i,st}-g_{st}\right\|_{C^{1,\alpha}(\Omega)}\to 0, \ \text{as}\ i\to \infty,
\end{equation}
where the $C^{1,\alpha}$-norm $\left\|f\right\|_{C^{1,\alpha}(\Omega)}$ of a smooth function $f$ is defined by
$$\left\|f\right\|_{C^{1,\alpha}(\Omega)}=\sup_{x\in \Omega}\left|f(x)\right|+\sum_{k=1}^{n}\sup_{x\in \Omega}\left|\partial_{k}f(x)\right|
+\sum_{k=1}^{n}\sup_{y\neq z \in \Omega}\frac{|\partial_{k}f(y)-\partial_{k}f(z)|}{d_{g}(y,z)^{\alpha}}
$$
and $\partial_{k}=\frac{\partial}{\partial x^{k}}$.
\end{definition}
In practice, an open cover of coordinate charts $\left(x_{j}^{1},\dots,x_{j}^{n}\right):U_{j}\to \Omega_{j}\subset\mathbb{R}^{n}$ are usually fixed as the background coordinate charts for the $C^{1,\alpha}$-convergence on $M$.
\begin{definition}\label{C1-convergence-for-Rie-manifolds}
Let $(M_{j},g_{j})$ and $(M,g)$ be closed smooth  $n$-manifolds with $C^{1,\alpha}$-smooth Riemannian metrics. We say that $(M_{j},g_{j})$ converges to $(M,g)$ in the $C^{1,\alpha}$-topology if there exists an integer $j_{0}>0$ such that the following holds: for each $j\geq j_{0}$ there exists $C^{2,\alpha}$-diffeomorphism $\Phi_{j}:M\to M_{j}$ such that the pullback metric $\Phi_{j}^{*}g_{j}$ converges to $g$ in the sense of $C^{1,\alpha}$-norm.
\end{definition}
Note that, the pullback of $g_j$ by a diffeomorphism is crucial in Definition \ref{C1-convergence-for-Rie-manifolds}: even if $(M,g_j)$ converges to $(M,g)$ on the same manifold $M$ in the sense of $C^{1,\alpha}$-topology, it does not mean that $g_j$ converges to $g$ in the $C^{1,\alpha}$-norm. A counterexample can be found in \cite[Remark 3 below Main theorem]{HebeyHerzlich1997}.
\par

We say that a sequence of pointed complete Riemannian manifold $(M_i,g_i,p_i)$ converges in the $C^{1,\alpha}$-topology to a limit $(M,g,p)$ if
\begin{enumerate}
	\item there exists an exhaustion of $M$ by open subsets $\{U_i\}_{i=1}^{\infty}$ such that $U_i\subseteq U_{i+1}$ and $M=\bigcup U_i$;
	\item there exists a sequence of $C^{2,\alpha}$-embeddings $\phi_{i}:U_i\to M_i$ such that
	\begin{equation*}
	\phi_i(p)=p_i, \ \text{and}\ \phi_i^{*}g_i\overset{C^{1,\alpha}}{\longrightarrow} g
	\end{equation*}
	uniformly on any compact subset of $M$.
\end{enumerate}

Let us view $(\Omega,g_i)$ as the domain in Definition \ref{harmonic-radius} with the pullback metric by $\varphi_i^{-1}$, where $\varphi_i$ is a $C^{1,\alpha}$-harmonic coordinate chart $\varphi_i:B_{r_{h}}(q_i,g_i)\to \Omega\subset \mathbb R^n$. Then the Cartesian coordinates on $\mathbb R^n$ $(x^1,\dots,x^n):(\Omega,g_i)\to \mathbb R^n$ is harmonic with respect to $g_i$, which satisfies the $(\alpha,Q)$-$C^{1,\alpha}$-control \eqref{def-hr-1}-\eqref{def-hr-2}.
In the harmonic coordinates for a metric tensor $g$, the Ricci curvature satisfies the following equation:
$$g^{ij}\frac{\partial^2 g_{rs}}{\partial x^i\partial x^j}+B(\frac{\partial g_{kl}}{\partial x^m},g_{kl})=-2(\op{Ric}_{g})_{rs},$$
where $B$ is a quadratic term in $\frac{\partial g_{kl}}{\partial x^m}$ for $m=1,\dots,n$ (cf. \cite{DeturckKazdan1981}). By the standard $L^p$-estimate for elliptic PDEs, the $L^{2,p}$-norm of $g_{kl}$ admits a uniform bound that depends on $L^{1,p}$-norm of $g^{ij}$, $L^{p}$-bound on the term $B$ and the $L^{p}$-bound on $(\op{Ric}_g)_{rs}$ for any $1<p<+\infty$. Hence a subsequence of metric tensors $g_i$ converges to a limit $C^{1,\alpha}$-metric $g$ in the $C^{1,\alpha}$-norm if $|\op{Ric}_{g_i}|\leq n-1,$ where $\alpha=1-\frac{n}{p}$ for any $p>n$. More generally, if $g_i$ converges to $g$ in the $C^{1,\alpha}$-norm with respect to another fixed coordinates on $\Omega$, then similarly by the elliptic $L^p$ regularity, the harmonic coordinates $(x^1_i,\cdots,x^n_i)$ of $g_i$ admit a uniform $L^{3,p}$-bound, which implies that they converge to the harmonic coordinates of $g$ in the $C^{2,\alpha}$-norm.

Conversely, given a harmonic coordinate chart $(x^1,\dots,x^{n}):(\Omega,g)\to \mathbb{R}^{n}$ at $p$ with $(\alpha,Q)$-$C^{1,\alpha}$-control for $g$, the Dirichlet problem associated with $C^{1,\alpha}$-nearby metric $g_i$ can be solved on $(\Omega,g_i)$, with boundary value $x_{i}^{k}=x^{k}$ for each $k=1,\dots,n$. And the Schauder estimates give almost the same $C^{1,\alpha}$-control in the interior of $\Omega$.

Therefore, the $C^{1,\alpha}$-harmonic radius under bounded Ricci curvature is continuous in the sense of $C^{1,\alpha}$-topology, i.e., the following proposition.
\begin{proposition}[\cite{Anderson1990},\cite{AnCh1992}]\label{continuity-of-harmonic-radius}
Let $(M_{i},g_{i})$ be a sequence of Riemannian manifolds with $|\op{Ric}_{(M_i,g_i)}|\le n-1$, which $C^{1,\alpha}$-converges to a $C^{1,\alpha}$-Riemannian manifold $(M,g)$. Then
\begin{equation*}
r_{h}(M,g)=\lim_{i\to\infty}r_{h}(M_{i},g_i),
\end{equation*}
The same holds for $r_h(z_i,g_i)$ and $r_h(z,g)$ as $z_i\in (M_i,g_i)$ converges to $z\in (M,g)$.
\end{proposition}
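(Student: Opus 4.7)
The plan is to establish the continuity $r_{h}(M,g)=\lim_{i\to\infty}r_{h}(M_{i},g_{i})$ as two separate semi-continuity inequalities. For the first direction $\liminf_{i\to\infty}r_{h}(M_{i},g_{i})\ge r_{h}(M,g)$, fix any $r<r_{h}(M,g)$ and a harmonic chart $\varphi=(x^{1},\dots,x^{n}):B_{r}(q,g)\to\Omega\subset\mathbb{R}^{n}$ realizing the $(\alpha,Q)$-$C^{1,\alpha}$-control from Definition \ref{harmonic-radius}. Because restricting to a strictly smaller ball $r<r_{h}(M,g)$ forces the scale-invariant bounds in \eqref{def-hr-1}--\eqref{def-hr-2} to hold with strict inequality (with some slack $Q'<Q$), there is room to absorb small perturbations. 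Using the embeddings $\phi_{i}$ from the $C^{1,\alpha}$-convergence, transport the values of $x^{k}$ onto $\partial B_{r}(q_{i},g_{i})$ as boundary data, and solve the Dirichlet problem $\Delta_{g_{i}}x_{i}^{k}=0$ on $B_{r}(q_{i},g_{i})$. Since $\phi_{i}^{*}g_{i}\to g$ in $C^{1,\alpha}(\Omega)$, standard Schauder estimates for the Dirichlet problem yield $x_{i}^{k}\to x^{k}$ in $C^{2,\alpha}$ on compact subsets. Thus for $i$ large, $(x_{i}^{1},\dots,x_{i}^{n})$ is a genuine harmonic coordinate chart on $B_{r}(q_{i},g_{i})$ realizing the $(\alpha,Q)$-control, whence $r_{h}(M_{i},g_{i})\ge r$.

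For the reverse direction $\limsup_{i\to\infty}r_{h}(M_{i},g_{i})\le r_{h}(M,g)$, I argue by contradiction: suppose after passing to a subsequence that $r_{h}(M_{i},g_{i})\ge r_{0}>r_{h}(M,g)$, and select harmonic coordinate charts $\psi_{i}=(y_{i}^{1},\dots,y_{i}^{n}):B_{r_{0}}(q_{i},g_{i})\to\mathbb{R}^{n}$ realizing $(\alpha,Q)$-control. Combining $|\op{Ric}_{g_{i}}|\le n-1$ with the harmonic-coordinate Ricci equation
\begin{equation*}
g_{i}^{st}\partial_{s}\partial_{t}g_{i,kl}+B\!\left(\partial g_{i,\cdot},g_{i,\cdot}\right)=-2(\op{Ric}_{g_{i}})_{kl}
\end{equation*}
and the interior $L^{p}$-estimate yields uniform $W^{2,p}$-bounds on $g_{i,kl}$ for every $p<\infty$. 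Bootstrapping via $\Delta_{g_{i}}y_{i}^{k}=0$ produces uniform $W^{3,p}$-bounds on the coordinate functions themselves. Extracting a subsequence gives $C^{2,\alpha}$-limits $y^{k}$ forming a harmonic coordinate chart on $B_{r_{0}}(q,g)$ with $(\alpha,Q)$-control, contradicting $r_{0}>r_{h}(M,g)$.

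The pointed statement $r_{h}(z_{i},g_{i})\to r_{h}(z,g)$ for $z_{i}\to z$ follows by localizing both arguments at the converging base points through the background charts. The principal technical hurdle is the first direction: one must verify that the Schauder constants associated to the Dirichlet problems on $(B_{r}(q_{i},g_{i}),g_{i})$ are controlled uniformly in $i$. This is ensured because the coefficients $g_{i}^{st}$ form a $C^{0,\alpha}$-equicontinuous family in the fixed background coordinates (by $C^{1,\alpha}$-convergence of $\phi_{i}^{*}g_{i}\to g$), while the right-hand sides $(\op{Ric}_{g_{i}})_{kl}$ are uniformly bounded in $L^{\infty}$ by the Ricci hypothesis; together these make the boundary-to-interior passage uniform in $i$, which closes the argument.
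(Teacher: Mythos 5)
Your proposal follows essentially the same route the paper sketches before stating the proposition: the lower semicontinuity direction via solving the Dirichlet problem with boundary data transported from the limiting harmonic chart and then applying Schauder estimates, and the upper semicontinuity direction via the Ricci equation in harmonic coordinates combined with interior $L^p$-estimates and bootstrapping to extract $C^{2,\alpha}$-convergent charts. The only minor gap is in the $\limsup$ direction, where you should explicitly choose the base points $q_i$ to converge to a point $q$ realizing $r_h(q,g)<r_0$ so that the extracted limit chart yields the contradiction; and in the $\liminf$ direction note that only bound \eqref{def-hr-2} automatically acquires slack under shrinking $r$, whereas \eqref{def-hr-1} needs a normalization (e.g.\ a linear change of coordinates making $g_{ij}(q)=\delta_{ij}$) before the perturbation argument closes cleanly.
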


Cheeger-Gromov-Anderson's $C^{1,\alpha}$-convergence theorem says that the diffeomorphism types of the whole manifolds are also stable under the $C^{1,\alpha}$-topology, which provides a fundamental tool in this paper.
\begin{theorem}[\cite{Anderson1990}]\label{thm-An-convergence}
	The space $\mathcal{M}_{|Ric|}^n(i_0,D)$ of all closed Riemannian $n$-manifolds $(M,g)$ such that
	\begin{equation}\label{condition-C1alpha-convergence}
	\left|\Ric_{(M,g)}\right|\le n-1,\quad \injrad(M,g)\ge i_0>0,\quad  \diam(M,g)\le D
	\end{equation}
is precompact in the $C^{1,\alpha}$-topology for any $0<\alpha<1$. More precisely, any sequence of $n$-manifolds $\left\{(M_i,g_i)\right\}\subseteq \mathcal{M}_{|Ric|}^n(i_0,D)$ admits a subsequence $(M_{i_1},g_{i_1})$ that converges to a closed smooth manifold $(M,g)$ with a $C^{1,\alpha}$-Riemannian metric $g$ via $C^\infty$-smooth diffeomorphisms $f_{i_1}: M\to M_{i_1}$ in the $C^{1,\alpha}$-topology.
\par
In particular, there are only finitely many diffeomorphism types of $n$-manifolds satisfying \eqref{condition-C1alpha-convergence}.
\end{theorem}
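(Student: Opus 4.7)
The plan is to reduce the theorem to a single a priori estimate: under the hypotheses $|\Ric|\le n-1$ and $\injrad\ge i_0$, the $C^{1,\alpha}$-harmonic radius $r_h(q,g)$ is uniformly bounded below by some $r_0=r_0(n,i_0,\alpha,Q)>0$ for every $q\in M$. Granted this, Gromov's precompactness theorem (available from the Ricci lower bound and $\diam\le D$) combined with elliptic regularity for the harmonic-coordinate Ricci equation will deliver a $C^{1,\alpha}$-convergent subsequence, and finiteness of diffeomorphism types becomes automatic because any two sufficiently $C^{1,\alpha}$-close Riemannian $n$-manifolds are diffeomorphic.

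For the harmonic-radius estimate I would argue by contradiction with rescaling. Suppose $(M_i,g_i)\in\mathcal{M}^n_{|\Ric|}(i_0,D)$ admits $q_i$ with $r_i:=r_h(q_i,g_i)\to 0$. Rescale $\tilde g_i:=r_i^{-2}g_i$, so that by Remark \ref{scaling-invariant-harmonic-radius} one has $r_h(q_i,\tilde g_i)=1$, while $|\Ric_{\tilde g_i}|\le (n-1)r_i^2\to 0$ and $\injrad_{\tilde g_i}(q_i)\to\infty$. A pointed Gromov--Hausdorff subsequential limit $(Y,q_\infty)$ on balls of uniform size is non-collapsed by Bishop--Gromov together with the injectivity-radius bound; combined with the vanishing Ricci in the limit and volume-rigidity, $(Y,q_\infty)$ is forced to be $(\mathbb{R}^n,0)$. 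I then pull back the Euclidean coordinate functions on $B_{3/2}(q_\infty)$ via an $\epsilon_i$-Gromov--Hausdorff approximation to prescribe Dirichlet data on $\partial B_{3/2}(q_i,\tilde g_i)$ and solve for harmonic functions $(x^1_i,\dots,x^n_i)$. The Ricci equation $g^{ab}\partial_a\partial_b g_{jk}+Q(g,\partial g)=-2\Ric_{jk}$ in these coordinates, together with the standard elliptic $L^p$-estimate, yields uniform $L^{2,p}$-bounds on the metric components (hence $C^{1,\alpha}$ by Sobolev embedding), so the charts realize the $(\alpha,Q)$-$C^{1,\alpha}$-control of Definition \ref{harmonic-radius} on a ball of radius strictly greater than $1$. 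This contradicts $r_h(q_i,\tilde g_i)=1$.

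Granted the uniform harmonic radius $r_0$, precompactness proceeds as follows. Apply Gromov's theorem to any sequence in $\mathcal{M}^n_{|\Ric|}(i_0,D)$ and pass to a Gromov--Hausdorff limit $(X,d)$. Choose a finite $\tfrac{r_0}{10}$-net on $X$, lift it to each $M_i$ through the approximations, and cover $M_i$ with the resulting harmonic coordinate charts of radius $r_0$. Within every such chart the metric components $g^{(i)}_{jk}$ obey the fixed bounds \eqref{def-hr-1}--\eqref{def-hr-2}, so after a diagonal extraction they $C^{1,\alpha}$-converge to $C^{1,\alpha}$-tensors on the Euclidean targets. Because the transition maps between overlapping harmonic charts are themselves controlled in $C^{2,\alpha}$ (by elliptic regularity applied once more to the harmonic coordinate functions), the local limits patch to a global smooth structure on $X$ carrying a $C^{1,\alpha}$-metric $g$, and the same identifications produce $C^{2,\alpha}$-diffeomorphisms $f_i:X\to M_i$ with $f_i^*g_i\to g$ in the $C^{1,\alpha}$-norm, as required by Definition \ref{C1-convergence-for-Rie-manifolds}. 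Finiteness of diffeomorphism types follows from precompactness by covering the space with finitely many $C^{1,\alpha}$-balls, inside each of which the manifolds are diffeomorphic.

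The main obstacle is the harmonic radius lower bound in the second paragraph. Under bounded sectional curvature, Jost--Karcher gives harmonic coordinates directly from Rauch-type comparison, but a two-sided Ricci bound controls only the Laplacian of $g_{jk}$ in harmonic coordinates and not the full curvature operator; the argument is therefore circular in appearance, since one wants elliptic estimates on $\Delta g_{jk}$ yet their application presupposes harmonic coordinates whose very existence is at stake. Anderson's device of prescribing Dirichlet data pulled back from the Euclidean blow-up limit of the rescaled sequence breaks this circle, and the delicate technical point is to verify, by carefully combining the $\epsilon_i$-Gromov--Hausdorff closeness to $\mathbb{R}^n$ with the $L^p$-theory for the Ricci equation on the rescaled balls, that these harmonic functions actually realize the $(\alpha,Q)$-$C^{1,\alpha}$-control rather than merely being harmonic.
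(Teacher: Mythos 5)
First, note that the paper does not prove this statement at all: it is quoted as Anderson's theorem \cite{Anderson1990} and used as a tool, so your proposal can only be measured against the classical argument. Your skeleton is the right one (blow-up/contradiction for a uniform $C^{1,\alpha}$-harmonic radius bound, then Gromov precompactness, charts with fixed $(\alpha,Q)$-control, diagonal extraction and patching, and finiteness of diffeomorphism types), but the core step has two genuine gaps. (i) You never choose the blow-up points: picking arbitrary $q_i$ with $r_h(q_i,g_i)\to 0$ and rescaling gives $r_h(q_i,\tilde g_i)=1$ at the single point $q_i$ only. Without selecting $q_i$ to (almost) minimize the harmonic radius, you have no uniform chart control at nearby points, hence no $C^{1,\alpha}$ (or $L^{2,p}$) convergence of $\tilde g_i$ on balls of radius $3/2$; then ``vanishing Ricci in the limit'' is not an equation you can use, and your Dirichlet step collapses: to apply the elliptic $L^p$-estimate to the Ricci equation on $B_{3/2}(q_i,\tilde g_i)$ you need background coordinates with controlled coefficients covering that ball, which is exactly what is missing outside the unit chart at $q_i$. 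This is the circularity you flag in your last paragraph, and transporting boundary data by a Gromov--Hausdorff approximation does not break it; what breaks it is the almost-minimal point selection, after which the contradiction is obtained most cleanly from the continuity of the harmonic radius under $C^{1,\alpha}$-convergence (Proposition \ref{continuity-of-harmonic-radius}): $1=r_h(q_i,\tilde g_i)\to r_h(q_\infty)=\infty$.

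(ii) Your identification of the blow-up limit with $(\mathbb{R}^n,0)$ is not justified. The injectivity radius bound gives, via Croke, only a dimensional lower volume bound $\op{Vol}(B_1)\ge c_n>0$ with $c_n<\omega_n$; volume rigidity (Bishop equality, or Cheeger--Colding almost-rigidity) requires almost-Euclidean volume, which you do not have, and non-collapsed Ricci-flat limits need not be Euclidean (ALE spaces are exactly such blow-up limits when the injectivity radius is not controlled). The correct use of $\injrad_{\tilde g_i}\to\infty$ is different: once the limit exists in the $C^{1,\alpha}$-harmonic-coordinate topology it is a smooth Ricci-flat manifold (weak solution of $\Ric=0$ plus elliptic regularity), its injectivity radius is infinite by lower semicontinuity under this convergence, hence it has no conjugate points; Ricci-flat without conjugate points forces flatness, and flat plus $\injrad\equiv\infty$ forces $\mathbb{R}^n$. (Alternatively one proves the volume-normalized version of the lemma, where almost-maximal volume is a hypothesis.) With (i) and (ii) repaired, your third paragraph (globalization, $C^{2,\alpha}$ transition maps, patching, and finiteness of diffeomorphism types) is the standard and correct conclusion, up to the routine remark that the resulting $C^{2,\alpha}$ diffeomorphisms can be smoothed to the $C^\infty$ ones asserted in the statement.
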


\begin{remark}\label{notes-of-C1-convergence}
Theorem \ref{thm-An-convergence} also holds for bounded domains in Riemannian manifolds, and for pointed complete but noncompact manifolds, after restricting to those compact subsets definitely away from the incomplete boundary (see \cite[Main Lemma 2.2]{Anderson1990}).
\end{remark}

\subsection{Ricci Flows}
Let $(M,g)$ be a closed Riemannian manifold. The Ricci flow was introduced by Hamilton \cite{Hamilton1982} as the solution of the following degenerate parabolic PDE,
\begin{equation}\label{ricci-flow-equation}
\frac{\partial}{\partial t}g(t)=-2\Ric_{g(t)},\qquad g(0)=g.
\end{equation}
The solution always exists for a short time $t>0$, and if it admits a finite maximal flow time $T_{max}<+\infty$, then the curvature tensor blows up as $t\to T_{max}$, i.e., $\max\left|\operatorname{Rm}(g(t))\right|_{g(t)}\to +\infty$.
\par
A basic property of Ricci flow is that it improves the regularity of the initial metric (\cite{Shi1989-1, Shi1989-2}), which depends on the flow time. The existence of a uniform definite flow time is important in practice.

Dai-Wei-Ye \cite{DWY1996} proved that a uniform flow time $T(n,r_0)$ exists for a closed $n$-manifold $(M,g)$ satisfying $\left|\Ric_{(M,g)}\right|\leq n-1$ and the conjugate radius $\conjrad(M,g)\geq r_{0}>0$. As already pointed out by \cite{CRX2017}, the conjugate radius condition in their proof is only used to derive a uniform $L^{2,p}$-harmonic coordinates for the lifted metric on $B_{r_0}(0)\subseteq T_{x}M$ for all $p\geq 1$ (see \cite[Remark 1]{DWY1996}) and $x\in M$, which is required to apply the weak maximum principle \cite[Theorem 2.1]{DWY1996}. Since the same holds at a preimage point $\tilde x$ on the universal covering space of a $\rho$-ball $B_\rho(x)$ on $(M,g)$ when $(M,g)$ has $(\delta,\rho)$-local covering geometry, \cite[Theorem 1.1]{DWY1996} can be reformulated into the following form.
\begin{theorem}[\cite{DWY1996}, cf. {\cite[Theorem 1.5]{CRX2017}}]\label{thm-smoothing-ricci-flow}
	Given $n,\rho>0$, there exist constants $\delta(n),T(n,\rho)>0$ and $C(n,\rho)>0$ such that for any $0<\delta\le \delta(n)$, if $(M,g)$ is a closed $n$-manifold
	with $|\Ric_M|\leq n-1$ and $(\delta,\rho)$-Reifenberg local covering geometry, then the Ricci flow equation (\ref{ricci-flow-equation})
	has a unique smooth solution $g(t)$ for $0<t\leq T(n,\rho)$ satisfying
\begin{equation}\label{ineq-smoothing-sec}
	\left\{
	\begin{array}{llll}
	& \left|g(t)-g\right|_{g}\leq 4t;\\
	& \left|\operatorname{Rm}(g(t))\right|_{g(t)}\leq C(n,\rho)t^{-\frac{1}{2}};\\
    & \left|\nabla^{k}\operatorname{Rm}(g(t))\right|_{g(t)}\leq C(n,\rho,k,t);\\
	& \left|\Ric(M,g(t))\right|_{g(t)}\leq 2(n-1),
	\end{array}
	\right.
	\end{equation}
where $\operatorname{Rm}(g(t))$ denotes the curvature tensor of $g(t)$, $\nabla^{k}\operatorname{Rm}(g(t))$ the $k^{th}$-convariant derivative of $\operatorname{Rm}(g(t))$, whose norm is measured in $g(t)$.
\end{theorem}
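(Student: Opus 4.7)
My plan is to present Theorem \ref{thm-smoothing-ricci-flow} as a direct reduction to Dai-Wei-Ye's original theorem \cite{DWY1996}, tracking precisely where their conjugate radius hypothesis is used and replacing it with the lift to the local universal cover afforded by $(\delta,\rho)$-Reifenberg local covering geometry. Indeed, the excerpt already remarks that the conjugate radius was used solely to guarantee a uniform $L^{2,p}$-harmonic chart on a tangent-space ball; our task is to supply the same kind of chart from a different source.

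First I would recall the structure of Dai-Wei-Ye's proof. Hamilton's short-time existence gives a smooth solution $g(t)$ on a maximal interval $[0,T_{\max})$, and under $|\op{Ric}|\le n-1$ one derives a parabolic inequality schematically of the form $\partial_t|\op{Rm}|\le \Delta|\op{Rm}|+c(n)|\op{Rm}|^2$; a weak maximum principle argument \cite[Theorem 2.1]{DWY1996} then gives $|\op{Rm}(g(t))|_{g(t)}\le C(n,\rho)t^{-1/2}$ on some uniform interval $(0,T(n,\rho)]$. The only place the background geometric hypothesis enters is in supplying, at each $x\in M$, a chart of definite size on which the coefficients of $g$ admit uniform $L^{2,p}$ bounds for every $p<\infty$; these bounds are what legitimize the weak maximum principle in the pulled-back setting. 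In \cite{DWY1996} the chart is constructed as the harmonic coordinates on $B_{r_0}(0)\subset T_xM$ obtained by pulling $g$ back via $\exp_x$, and conjugate radius $\ge r_0$ is invoked precisely to make $\exp_x$ a local diffeomorphism of controlled geometry on which Anderson's harmonic-coordinate existence under bounded Ricci applies.

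Next I would change the supplier of charts to the local universal cover. Fix $\delta(n)$ to be the Anderson constant from Theorem \ref{thm-An-convergence} so that any $(\delta(n),\rho)$-Reifenberg point in a manifold with $|\op{Ric}|\le n-1$ carries a definite $L^{2,p}$-harmonic chart of radius $r_0(n,\rho)>0$. For each $x\in M$, let $\pi:(\widetilde{B_\rho(x)},\tilde x)\to (B_\rho(x),x)$ be the Riemannian universal cover; by hypothesis $\tilde x$ is $(\delta(n),\rho)$-Reifenberg in the lifted metric $\tilde g=\pi^*g$, so at $\tilde x$ one has a harmonic chart with uniform $L^{2,p}$ control. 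Since Ricci flow is a local PDE invariant under local isometries, the solution $g(t)$ on $M$ lifts uniquely to a solution $\tilde g(t)$ of $\partial_t\tilde g=-2\op{Ric}_{\tilde g(t)}$ with initial datum $\tilde g$, and the pointwise curvature norms at $\tilde x$ and at $x$ coincide. Running Dai-Wei-Ye's weak maximum principle verbatim on $B_{r_0/2}(\tilde x,\tilde g)$ in these coordinates then yields $T(n,\rho)>0$ and $|\op{Rm}(\tilde g(t))|_{\tilde g(t)}\le C(n,\rho)t^{-1/2}$ for $0<t\le T(n,\rho)$, which descends to the curvature estimate in \eqref{ineq-smoothing-sec}.

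Finally I would derive the remaining conclusions. The $C^0$-closeness $|g(t)-g|_g\le 4t$ follows by integrating the flow equation against the Ricci bound $|\op{Ric}(g(s))|_{g(s)}\le 2(n-1)$, which I establish by a parallel weak maximum principle applied to the evolution of $|\op{Ric}|^2$, shrinking $T(n,\rho)$ if needed so that the initial Ricci norm at most doubles. The higher-order estimates $|\nabla^k\op{Rm}(g(t))|_{g(t)}\le C(n,\rho,k,t)$ are then immediate from Shi's interior derivative estimates \cite{Shi1989-1,Shi1989-2} applied to the smooth solution on $(0,T(n,\rho)]$. The main technical point to verify is that the weak maximum principle of \cite{DWY1996}, originally stated for the complete lift to $T_xM$, remains valid on the incomplete ball $B_{r_0/2}(\tilde x,\tilde g)$; however their argument is already localized by cutoffs supported at scale $r_0$ and only consumes the $L^{2,p}$ harmonic coordinate bounds at that scale, so the transfer is essentially routine and constitutes the only substantive verification in the reduction.
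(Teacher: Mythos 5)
Your proposal is correct and follows essentially the same line as the paper: the paper itself does not re-prove the theorem but observes, in the paragraph preceding it, that Dai--Wei--Ye's conjugate-radius hypothesis is used only to extract uniform $L^{2,p}$-harmonic coordinates on a lift of a local ball, and that the $(\delta,\rho)$-Reifenberg local covering geometry supplies the same charts at a preimage point $\tilde x$ in the universal cover of $B_\rho(x)$. Your write-up is just a more explicit expansion of that reduction (spelling out Hamilton short-time existence, the weak maximum principle, lifting the flow to the cover, and then Shi estimates and integration of the flow equation for the remaining bounds), with no substantive departure from the paper's intended argument.
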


\subsection{Gromov-Hausdorff precompactness for the covering spaces of open balls}

Let $(M_i,g_i)$ be a sequence of complete Riemannian $n$-manifolds with $\op{Ric}_{(M_i,g_i)}\ge -(n-1)$. Let $B_r(x_i,g_i)$ be an open $r$-ball in $(M_i,g_i)$, and $\widetilde{B}(x_i,r)$ the Riemannian universal cover of $B_r(x_i,g_i)$. Then $\widetilde{B}(x_i,r)$ may not admit a convergence subsequence in the pointed Gromov-Hausdorff topology (see \cite[Example 3.2]{SormaniWei2004}).

Recently the third author \cite{Xu2021} proved a precompactness principle for open domains in complete Riemannian manifold with $\op{Ric}\ge -(n-1)$, which particularly is suitable for the covering spaces of open balls.

Let $\widehat{B}(x,r,R)$ be a component of the preimage $\pi^{-1}(B_r(x,g))$ in the Riemannian universal cover $\pi:\widetilde{B}(x,R)\to B_R(x,g)$. Then
$\widehat{B}(x,r,R)$ is a normal $G$-cover of $B_r(x,g)$, where
\begin{align*}
&G=\Gamma_{r/R,R}(x)=\op{Image}[\pi_1(B_r(x,g),x)\to \pi_1(B_R(x,g),x)], \quad\text{and}\\
&\pi_1(\widehat{B}(x,r,R))=\op{Kernel}[\pi_1(B_r(x,g),x)\to \pi_1(B_R(x,g),x)].
\end{align*}
We endow $\widehat{B}(x,r,R)$ with a base point $\hat x$ in the preimage of $x$ and its length metric induced from $\widetilde{B}(x,R)$.

\begin{theorem}[\cite{Xu2021}]\label{thm-precompactness-a}
	For any $R>r>0$, let $\widehat{\mathcal B}(r,R)$ be the set consisting of the Riemannian normal covers $\widehat{B}(x,r,R)$ of all open balls in complete Riemannian $n$-manifolds with $\op{Ric}\ge -(n-1)$.
	Then $\widehat{\mathcal B}(r,R)$ is precompact in the pointed Gromov-Hausdorff topology.
\end{theorem}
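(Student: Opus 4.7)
The plan is to verify Gromov's pointed Gromov--Hausdorff precompactness criterion by establishing, for every $s\in(0,r)$ and $\varepsilon\in(0,s)$, a uniform upper bound $N=N(n,r,R,s,\varepsilon)$ on the cardinality of any $\varepsilon$-separated subset of the length ball $B_s(\hat x,\widehat B(x,r,R))$, independently of the choice of $(M,g)$ and base ball.

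The geometric input I would rely on is the following lifting property: $\widetilde B(x,R)$ is an (a priori incomplete) Riemannian $n$-manifold with $\operatorname{Ric}\ge-(n-1)$ via the local isometry $\pi$, and the length metric on $\widehat B(x,r,R)$ agrees with the restriction of the one on $\widetilde B(x,R)$. Any length-minimizing path in $\widehat B$ from $\hat x$ projects to an equal-length path in $B_r(x,g)\subset M$, so each $\hat y\in B_s(\hat x,\widehat B)$ satisfies $d_M(x,\pi(\hat y))<s$; by completeness of $M$, every $M$-geodesic from $\pi(\hat y)$ of length less than $R-s$ stays inside $B_R(x,g)$ and therefore lifts uniquely to a radial geodesic in $\widetilde B$ starting at $\hat y$. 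Consequently the classical Bishop--Gromov relative volume comparison applies on $\widetilde B$ at any such $\hat y$ for radii not exceeding $R-d_M(x,\pi(\hat y))$.

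With this tool in hand, I would take a maximal $\varepsilon/2$-separated net $\{\hat y_j\}_{j=1}^N\subset B_s(\hat x,\widehat B)$, so that the balls $B_{\varepsilon/4}(\hat y_j,\widehat B)$ are mutually disjoint and all sit inside $B_{s+\varepsilon/4}(\hat x,\widehat B)$. Applying Bishop--Gromov at each $\hat y_j$ with outer radius $2s+\varepsilon/4$ (admissible once $3s+\varepsilon/4<R$) lower bounds $\operatorname{Vol}(B_{\varepsilon/4}(\hat y_j,\widehat B))$ by a fixed constant multiple of $\operatorname{Vol}(B_{2s+\varepsilon/4}(\hat y_j,\widetilde B))$, which in turn dominates $\operatorname{Vol}(B_{s+\varepsilon/4}(\hat x,\widehat B))$ by the triangle inequality. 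The standard disjointness argument then yields $N\le N(n,s,\varepsilon)$. The residual regime $3s+\varepsilon/4\ge R$ is handled by covering $B_s(\hat x,\widehat B)$ iteratively with length balls of radius $<R/4$, each of which plays the role of the base ball in its own instance of the same setup.

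The main obstacle is the rigorous extension of Bishop--Gromov to the incomplete Riemannian manifold $\widetilde B(x,R)$. This reduces precisely to the lifting property highlighted in the second paragraph, which guarantees that minimizing radial geodesics from the chosen centers remain in $\widetilde B$ up to the required radius, so that the standard Jacobi-field mean-curvature comparison proof of Bishop--Gromov goes through verbatim. Once that is in place, Gromov's pointed precompactness theorem delivers the desired compactness of $\widehat{\mathcal B}(r,R)$.
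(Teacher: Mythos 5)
The paper itself cites \cite{Xu2021} for this theorem and does not reproduce a proof, so I am evaluating your argument on its own terms. Your central observation is sound and is indeed the crucial geometric input: since every $\hat y\in\widehat B(x,r,R)$ projects into $B_r(x,g)$, any $M$-geodesic issuing from $\pi(\hat y)$ of length $<R-d_M(x,\pi(\hat y))$ stays in $B_R(x,g)$ and lifts, so $\exp_{\hat y}$ is defined in $\widetilde B(x,R)$ out to radius at least $R-r$; combined with a local Hopf--Rinow argument this does give Bishop--Gromov relative volume comparison in $\widetilde B(x,R)$ at every such $\hat y$ up to that radius. Your framing via Gromov's $\varepsilon$-capacity criterion is also the right one.

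The gap is in the packing step, and it is not a cosmetic one. You take an $\varepsilon/2$-separated set $\{\hat y_j\}$ in the \emph{intrinsic} metric $d_{\widehat B}$ and use the Bishop--Gromov bound, which lives in the \emph{ambient} metric $d_{\widetilde B}$. The inequality you need, $\operatorname{Vol}\bigl(B_{\varepsilon/4}(\hat y_j,\widehat B)\bigr)\ge c\cdot\operatorname{Vol}\bigl(B_{2s+\varepsilon/4}(\hat y_j,\widetilde B)\bigr)$, only follows from Bishop--Gromov once you know $B_{\varepsilon/4}(\hat y_j,\widehat B)=B_{\varepsilon/4}(\hat y_j,\widetilde B)$; since $\widehat B\subset\widetilde B$ forces $d_{\widehat B}\ge d_{\widetilde B}$ this identification holds exactly when $\varepsilon/4<r-d_M(x,\pi(\hat y_j))$, i.e.\ when $\hat y_j$ is $\varepsilon/4$-deep in the interior. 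In the first regime $s<r$ this is salvageable by taking $\varepsilon$ small relative to $r-s$, but Gromov's pointed criterion requires capacity bounds on $B_s(\hat x,\widehat B)$ for \emph{all} $s>0$ — and $\widehat B(x,r,R)$ can have unbounded $d_{\widehat B}$-diameter whenever the deck group $\Gamma_{r/R,R}(x)$ is infinite — so the "residual regime" is the whole ballgame, not an edge case. Your one-sentence fix ("covering $B_s(\hat x,\widehat B)$ iteratively with length balls of radius $<R/4$, each of which plays the role of the base ball in its own instance") does not work as stated: those balls are centered at arbitrary points $\hat z\in\widehat B$, possibly with $\pi(\hat z)$ arbitrarily near $\partial B_r(x)$, and for points $\hat y_j$ in such a ball there is no positive lower bound on $r-d_M(x,\pi(\hat y_j))$, so the two metrics need not agree on the small balls, the Bishop--Gromov volume lower bound is not available in $d_{\widehat B}$, and points that are $\varepsilon$-separated in $d_{\widehat B}$ may be arbitrarily close in $d_{\widetilde B}$, defeating the disjointness/volume argument. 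Closing the argument requires a genuinely new idea to control the intrinsic metric of $\widehat B$ near the lifted boundary $\pi^{-1}(\partial B_r(x))$, beyond the lifting property and classical Bishop--Gromov you invoke.
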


More generally, let $W(r)=\cap_jB_{r}(p_j,g)$ be a non-empty intersection of open $r$-balls $B_{r}(p_j,g)$ in $(M,g)$, and $\widetilde{W}(r)$ be the Riemannian universal cover of $W(r)$.
For $R>r>0$, we define $\widehat{W}(r,R)$ to be a component of the preimage of $W(r)$ in the Riemannian universal cover $\pi:\widetilde{W}(R)\to W(R)=\cap_jB_R(p_j,g)$.

\begin{theorem}[\cite{Xu2021}]\label{thm-precompactness-b}
	For any $R>r>0$, let $\widehat{\mathcal W}(r,R)$ be the set consisting of the normal covers $\widehat{W}(r,R)$ endowed with length metric of $W(r)$ in complete Riemannian $n$-manifolds with $\op{Ric}\ge -(n-1)$.
	Then $\widehat{\mathcal W}(r,R)$ is precompact in the pointed Gromov-Hausdorff topology.
\end{theorem}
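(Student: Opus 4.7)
The plan is to mirror the proof of Theorem \ref{thm-precompactness-a} in the single-ball case. First I would identify the essential geometric input, namely the buffer property $d(y, \partial W(R)) > R - r$ for every $y \in W(r)$. This is immediate from the triangle inequality: if $z$ satisfies $d(y, z) \le R - r$, then $d(z, p_j) \le d(y, p_j) + d(y, z) < r + (R - r) = R$ for every $j$, so $z \in W(R)$. This is exactly parallel to the single-ball case $d(y, \partial B_R(x)) > R - r$ for $y \in B_r(x)$, with the same quantitative constant $R - r$.

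Given this buffer, the proof of Theorem \ref{thm-precompactness-a} transplants to the intersection case. The key mechanism is that the universal cover $\pi : \widetilde{W}(R) \to W(R)$ is a local isometry, so $\widetilde{W}(R)$ is an open Riemannian manifold with $\Ric \geq -(n-1)$ inherited from $M$; and for any $\hat y \in \widehat{W}(r,R)$, the Riemannian ball $B_{R-r}(\hat y, d_{\widetilde{W}(R)})$ in the ambient metric stays inside $\widetilde{W}(R)$ without touching its incomplete boundary. Bishop--Gromov volume comparison therefore applies on these balls, yielding both a uniform upper bound on the Riemannian volume of any metric ball $B_\rho(\hat x, d_{\widehat W})$ (by iterating the comparison along chains of overlapping $(R-r)/4$-balls, whose cardinality is controlled by the chain length $\lesssim \rho/(R-r)$) and a uniform Bishop--Gromov lower bound on the volumes of small balls around each preimage point. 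A standard packing-to-covering argument converts this into a uniform bound $N(n, r, R, \rho, \epsilon)$ on the $\epsilon$-covering number of $B_\rho(\hat x, d_{\widehat W})$, and Gromov's precompactness criterion concludes.

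The principal obstacle, which already appears in Theorem \ref{thm-precompactness-a}, is the gap between the intrinsic length metric $d_{\widehat W}$ on $\widehat W(r,R)$ and the restricted ambient Riemannian metric $d_{\widetilde{W}(R)}$: length-minimizing paths in $\widehat W(r, R)$ must project into the smaller domain $W(r)$, while paths in $\widetilde{W}(R)$ are only constrained to stay in $W(R)$ and may dip into the shell $W(R) \setminus W(r)$. The buffer $R - r$ resolves this: on scales below the buffer, paths in $\widetilde{W}(R)$ departing a preimage of a point far from $\partial W(r)$ cannot exit $\widehat{W}(r,R)$, and a thin boundary-layer iteration absorbs the remainder. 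Since the argument uses only the buffer and the pointwise Ricci comparison on $\widetilde{W}(R)$, and neither the spherical shape of $B_r(x)$ nor any feature of $\partial W(r)$ beyond its relation to $\partial W(R)$, the single-ball case of Theorem \ref{thm-precompactness-a} and the intersection case are handled uniformly.
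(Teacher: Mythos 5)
You cannot ``mirror the proof of Theorem \ref{thm-precompactness-a}'' from this paper, because the paper proves neither statement: both Theorem \ref{thm-precompactness-a} and Theorem \ref{thm-precompactness-b} are imported from \cite{Xu2021} without proof, so your argument has to stand on its own. Your opening observation is correct and is indeed the raison d'\^etre of the two-scale cover: for $y\in W(r)$ the triangle inequality gives $d(y,M\setminus W(R))\ge R-r$, hence every point of $\widehat W(r,R)$ keeps a definite buffer $b=R-r$ to the incomplete part of $\widetilde W(R)$, and Bishop--Gromov comparison is available at scales below $b$ at every such point (this is what fails for $\widetilde B(x,r)$ itself, cf.\ the Sormani--Wei example quoted before Theorem \ref{thm-precompactness-a}).

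The gap is in the packing step, which is exactly the part you compress into ``a thin boundary-layer iteration absorbs the remainder.'' Two specific problems: (i) there is no ``uniform Bishop--Gromov lower bound on the volumes of small balls'' under $\op{Ric}\ge-(n-1)$ alone --- the manifolds may collapse, so only \emph{relative} comparison below the buffer scale is available, and volumes of balls at different centers must be related by chaining; (ii) the chaining/disjointness interplay is where the real difficulty sits, and the buffer by itself does not settle it. The covering numbers must be estimated for the length metric named in the statement (paths over $W(r)$), while volume comparison lives in the ambient metric of $\widetilde W(R)$. Points that are $\epsilon$-separated in the length metric need not be $\epsilon$-separated ambiently, since short connections through the shell $W(R)\setminus W(r)$ may exist in the cover; so the intrinsic $\epsilon/2$-balls are disjoint but admit no ambient-ball volume lower bound near the inner boundary of $W(r)$, whereas the ambient $\epsilon/2$-balls have controlled volume but need not be disjoint. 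Dually, if one works with the metric induced from $\widetilde W(R)$ (as in the single-ball convention), the intermediate points of a connecting path can lie deep in the shell, where the buffer and hence the comparison degenerate, and one cannot push such a path back over $W(r)$ without either changing the relevant homotopy class in the cover or losing all control of its length. Until you produce the actual local packing bound --- a uniform bound on the number of $\epsilon$-separated points of $\widehat W(r,R)$ inside a ball of radius comparable to $b$, compatible with the chosen metric --- the bound $N(n,r,R,\rho,\epsilon)$ feeding Gromov's criterion is not established; this is the substantive content of \cite{Xu2021}, not a routine transplant of a single-ball argument that, again, is not in this paper either.
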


Note that for each $j$, a component of $\pi_{j}^{-1}(W(R))$ in the universal cover $\pi_{j}:\widetilde{B}(p_{j},R)\to B_R(p_{j},g)$ is a normal cover of $W(R)$, such that $\widetilde{W}(R)$ covers $\pi_{j}^{-1}(W(R))$. By the definition of $\widehat{W}(r,R)$, we derive that
\begin{equation}\label{intersection-normal-cover}
\text{$\widehat{W}(r,R)$ is a normal cover of $\pi_{j}^{-1}(W(r))\subset \widehat{B}(p_{j},r,R)$.}
\end{equation}
This fact will be applied in Section 4.
\begin{remark}\label{rem-Reifenberg-normal-cover}
	Since $\widehat{B}(x,r/2,r)$ is a component of the preimage $\pi^{-1}(B_{r/2}(x,g))$ in the Riemannian universal cover $\pi:\widetilde{B}(x,r)\to B_r(x,g)$, the $(\delta,r)$-Reifenberg local covering geometry condition is naturally passed to $\widehat{B}(x,r/2,r)$. That is, if for any $x\in M$, $\tilde x\in \widetilde{B}(x,r)$ is a $(\delta,r)$-Reifenberg point, then $\hat x\in \widehat{B}(x,r/2,r)$ is $(\delta,r/4)$-Reifenberg.
\end{remark}

Finally we recall that, by \cite{HKRX2020} the local covering geometry via rewinding volume is equivalent to that by the Reifenberg condition for a manifold that is locally close to be Euclidean.
\begin{lemma}[{\cite[Lemma 2.1]{HKRX2020}}]\label{lem-rewinding-reifenberg}
	Given positive integer $n$ and real numbers $\delta, v>0$, there are constants $\epsilon(n), \rho(n,v,\delta)>0$ such that the following holds.
	
	Let $(M,g)$ be a Riemannian $n$-manifold with $\op{Ric}_{M}\ge -(n-1)$ and $$\op{Vol}(B_1(\tilde x))\ge v>0,$$ where $\tilde x$ is a preimage point of $x$ in the universal cover of $B_1(x)$.
	If $$d_{GH}(B_1(x),B_1^m(0))\le \epsilon(n),$$ then $\tilde x$ is a $(\delta,\rho(n,v,\delta))$-Reifenberg point, i.e.,
	$$d_{GH}(B_r(\tilde x), B_r^n(0))\le \delta r,\qquad \forall\; 0<r\le \rho(n,v,\delta).$$
\end{lemma}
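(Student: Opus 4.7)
The plan is to argue by contradiction, combining the equivariant Gromov-Hausdorff precompactness for covering balls (Theorem \ref{thm-precompactness-a}) with Cheeger-Colding's almost-splitting theorem and rigidity of non-collapsed Ricci limits. Assume the conclusion fails; then one finds $\delta_0,v_0>0$ and a sequence $(M_i,g_i,x_i)$ of complete Riemannian $n$-manifolds with $\op{Ric}_{M_i}\ge-(n-1)$ and $\op{Vol}(B_1(\tilde x_i))\ge v_0$, such that $d_{GH}(B_1(x_i),B_1^m(0))\to 0$ while $d_{GH}(B_{r_i}(\tilde x_i),B_{r_i}^n(0))>\delta_0 r_i$ for some $r_i\to 0$. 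I would first rescale to $\hat g_i=r_i^{-2}g_i$, so that $\op{Ric}_{\hat g_i}\ge -(n-1)r_i^2\to 0$ and, by Bishop-Gromov, $\op{Vol}_{\hat g_i}(B_1(\tilde x_i))\ge c(n)v_0>0$; moreover $B_R(x_i,\hat g_i)\to B_R^m(0)$ for each fixed $R>0$, since $B_R(x_i,\hat g_i)=B_{Rr_i}(x_i,g_i)$ sits inside a base that is almost Euclidean of dimension $m$ on scale $1$.

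Applying Theorem \ref{thm-precompactness-a} together with Fukaya's equivariant Gromov-Hausdorff compactness, I pass to a subsequence so that $(\widehat B(x_i,1/2,1),\tilde x_i,\Gamma_i,\hat g_i)\to(Y,\tilde x_\infty,G)$ equivariantly, with quotient $Y/G$ an open ball in $\mathbb R^m$. Colding's volume convergence forces $Y$ to be an $n$-dimensional non-collapsed Ricci-limit with $\op{Ric}_Y\ge 0$. Lifting Cheeger-Colding's $\delta$-splitting maps from the base to the covers through the Riemannian covering projection, the limit functions yield $m$ harmonic functions on $Y$ with vanishing Hessian; by Cheeger-Colding's splitting theorem these give an isometric decomposition $Y=\mathbb R^m\times Z$ with $Z$ a non-collapsed Ricci-limit of dimension $n-m$ and $\op{Ric}_Z\ge 0$. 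Because $Y/G\cong\mathbb R^m$, the group $G$ preserves the Euclidean factor and acts transitively on each fiber $\{p\}\times Z$, making $Z$ a homogeneous non-collapsed Ricci-limit.

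Next, $Y$ arises as the tangent cone of a non-collapsed Ricci-limit at $\tilde x_\infty$, hence by Cheeger-Colding's volume-cone rigidity it is a metric cone. The compatibility of the product decomposition with the cone structure forces $Z$ itself to be a cone over a homogeneous space, and combining the transitive $G$-isometry with $\op{Ric}_Z\ge 0$ and non-collapse (through iterated Cheeger-Gromoll splitting along lines produced by orbits through the vertex) forces $Z=\mathbb R^{n-m}$. Consequently $Y=\mathbb R^n$, contradicting $d_{GH}(B_1(\tilde x_i,\hat g_i),B_1^n(0))>\delta_0$ for all $i$. The hard part is this last rigidity step: deducing $Z=\mathbb R^{n-m}$ from the homogeneous-cone structure of a non-collapsed non-negatively Ricci curved limit; handling it will require carefully exploiting the $G$-orbit geometry to produce $n-m$ independent lines through the vertex so that the synthetic Cheeger-Gromoll splitting theorem can be iterated $n-m$ times.
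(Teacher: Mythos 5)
Your overall strategy (argue by contradiction, pass to equivariant limits of the covers, then split and use rigidity) is the same in spirit as the paper's, but the way you set up the limit creates two genuine gaps. First, after blowing up by $r_i^{-2}$ you assert that $B_R(x_i,\hat g_i)=B_{Rr_i}(x_i,g_i)$ converges to $B_R^m(0)$ ``since it sits inside a base that is almost Euclidean of dimension $m$ on scale $1$.'' This does not follow: Gromov--Hausdorff closeness of $B_1(x_i)$ to $B_1^m(0)$ at scale $1$ gives no control of the rescaled geometry at scales $r_i\to 0$; at scale $r_i$ the base of the collapsed $n$-manifold may look close to $\mathbb R^k$ with $k>m$, or to nothing Euclidean at all. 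Propagating almost-Euclideanness of the base down to arbitrarily small scales is essentially the content of the lemma itself and cannot be assumed. Since your splitting $Y=\mathbb R^m\times Z$ (via lifted $\delta$-splitting maps) and the transitivity of $G$ on the $Z$-fibers both rest on the blown-up quotient being an $\mathbb R^m$-ball, the remainder of the argument has no foundation as written.

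Second, the rigidity step is not merely ``hard'' but unsupported: your $Y$ is a limit of rescalings of a varying sequence at scales $r_i\to 0$, not a tangent cone of a fixed non-collapsed limit at a fixed point, so Cheeger--Colding volume-cone rigidity does not apply and $Y$ (hence $Z$) need not be a metric cone; $Z$ need not even be non-compact, so there may be no lines and the iterated Cheeger--Gromoll splitting cannot start (a flat torus factor, for instance, is homogeneous, non-collapsed, with $\op{Ric}\ge 0$, and is consistent with everything you derived while giving $Y\neq\mathbb R^n$). The paper avoids both problems by not rescaling along the sequence: it takes the equivariant limit of $\widehat B(x_i,\frac12,1)$ at the fixed scale, where the quotient is exactly $B^m_{1/2}(0)$, and only then blows up the fixed limit $Y$ at the fixed point $\hat x$. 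There the tangent cone is a metric cone because $Y$ is a non-collapsed Ricci limit; lines of $\mathbb R^m$ (the tangent cone of $Y/G$) lift, giving $T_{\hat x}=\mathbb R^m\times C(\Sigma)$ with the infinitesimal action $dG$ transitive on the cone factor, which forces $T_{\hat x}=\mathbb R^n$. Finally, Colding's volume convergence transfers the almost-maximal volume at one definite scale $\rho$ back to $\tilde x_i$ for large $i$, and Bishop--Gromov monotonicity upgrades this to the Reifenberg condition at all scales $r\le\rho$, yielding the contradiction. If you insist on blowing up along the sequence, you must first prove a quantitative propagation of the base's splitting down to the scales $r_i$, which is precisely what you are not entitled to assume.
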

\begin{proof}
	Lemma \ref{lem-rewinding-reifenberg} essentially is a restatement of \cite[Lemma 2.1]{HKRX2020}. For the reader's convenience we give a simple proof from a different viewpoint.
	
	Let us argue by contradiction. Assume that there is a sequence $(M_i,g_i)$ with $\op{Ric}_{M_i}\ge -(n-1)$, $\op{Vol}(B_1(\tilde x_i))\ge v>0$, $d_{GH}(B_1(x),B_1^m(0))\le \epsilon_i\to 0$, but $\tilde x_i$ is not a $(\delta,\rho)$-Reifenberg point for any fixed $\delta,\rho>0$ for all sufficient large $i$.
	
	By passing to a subseqence, let us consider the equivariant pointed Gromov-Hausdorff convergence as in \eqref{eqGH-local-covers}
	$$\begin{CD}
	(\widehat{B}(x_i,1/2,1),\hat x_i, \Gamma_i) @>GH>> (Y, \hat x, G)\\
	@V\pi_iVV @VV\pi V\\
	(B_{1/2}(x_i,g_i),x_i) @>GH>> (B_{1/2}^m(0),0)\subset \mathbb R^m.
	\end{CD}$$
	
	Since $Y/G=B_{1/2}^m(0)$, a standard blowing-up argument implies that the quotient of any tangent cone $T_{\hat x}$ modular the infinitesimal actions $dG$ induced by $G$ is $\mathbb R^m$, where the lines on $\mathbb R^m$ can be lifted onto $T_{\hat x}$. At the same time, by the fact that $Y$ is a Ricci-limit space of a non-collapsing sequence, $T_{\hat x}$ is a metric cone. Hence $T_{\hat x}$ splits to $\mathbb R^m\times C(\Sigma)$, and $dG$ acts transitively on $C(\Sigma)$. Hence $T_{\hat x}$ is $\mathbb R^n$.
	
	In particular, there is $\rho=\rho(\delta,\hat x)>0$ such that $\hat x$ is a $(\delta/10,\rho)$-Reifenberg point. Then by Colding's volume convergence (\cite{Colding1997}), $\tilde x_i$ is $(\delta,\rho)$-Reifenberg point for $i$ large, a contradiction.	
\end{proof}

\section{$C^{1,\alpha}$-regularity of limit spaces under bounded sectional curvature}
As the starting point of this paper, we give a simple proof of Theorem \ref{thm-Reifenberg-local-covering-geometry} for the limit spaces of closed manifolds with bounded sectional curvature.

In fact, it is essentially a corollary of \cite[Theorem 0.9, Corollary 0.11]{Fukaya1988}, which now can be improved as follows.
\begin{theorem}\label{thm-limit-bounded-sec}
	Let $(M_i,g_i)$ be a sequence of closed Riemannian $n$-manifolds with $|\op{sec}_{g_i}|\le 1$ such that $(M_i,g_i)\overset{GH}{\longrightarrow}X$. Then $X$ admits a stratification $X=S_0(X)\supset S_1(X)\supset \cdots \supset S_k(X)$ for $0\le k\le n$ (some of them may be the same) such that
	\numberwithin{enumi}{theorem}
	\begin{enumerate}[leftmargin=0pt,itemindent=*]
		\item\label{limit-bounded-sec-1} the Hausdorff dimension of $X$ is equal to $k$.
		\item\label{limit-bounded-sec-2} $S_{i}(X)\setminus S_{i+1}(X)=(R^{k-i}(X),h_{k-i})$ is a smooth $(k-i)$-manifold with a $C^{1,\alpha}$-Riemannian metric.
		
		Moreover, if the $k$-Hausdorff measure $H^k(X)\ge v>0$ and the diameter of $X$ $\le D$, then for any point $p\in S_{i}(X)\setminus S_{i+1}(X)$ that is $\epsilon$-away from $S_{i+1}(X)$, the $C^{1,\alpha}$-harmonic radius at $p$ is no less than $r_h(\epsilon\,|\,n,r,\alpha,Q,v,D)>0$, where $Q>0$, $0<\alpha<1$.
	\end{enumerate}

\end{theorem}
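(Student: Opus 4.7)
The strategy is to reduce to normal covers of small balls in the $M_i$, which under $|\op{sec}|\le 1$ are non-collapsed by Cheeger-Fukaya-Gromov, and then push the smooth structure down through the limit isometric group action.

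First, by \cite[Theorem 1.3]{CFG1992} there are constants $\delta=\delta(n)>0$ and $\rho_0=\rho_0(n)>0$ such that every $(M_i,g_i)$ admits $(\delta,\rho_0)$-Reifenberg local covering geometry. Fix $x_i\in M_i$ with $x_i\to x_\infty\in X$, and apply Theorem \ref{thm-precompactness-a} to extract, after passing to a subsequence, the equivariant Gromov-Hausdorff diagram
\begin{equation*}
\begin{CD}
(\widehat{B}(x_i,\rho_0/2,\rho_0),\hat x_i,\Gamma_i) @>GH>> (Y,\hat x_\infty,G)\\
@V\pi_i VV @V\pi_\infty VV\\
(B_{\rho_0/2}(x_i,g_i),x_i) @>GH>> (B_{\rho_0/2}(x_\infty),x_\infty).
\end{CD}
\end{equation*}
The bound $|\op{sec}|\le 1$ lifts to the covers, and Cheeger's lemma \cite{CGT1982} combined with the Reifenberg condition gives a uniform positive injectivity radius lower bound at $\hat x_i$. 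Hence by Theorem \ref{thm-An-convergence} (the non-collapsed case under bounded sectional curvature) the limit $Y$ is a smooth $n$-manifold with a $C^{1,\alpha}$-Riemannian metric. Moreover, Theorem \ref{thm-smoothing-ricci-flow} provides uniform $A$-regular Ricci-flow smoothings of the covers, so $Y$ is locally approximated in $C^{1,\alpha}$ by $C^\infty$-metrics with uniformly bounded sectional curvature.

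Second, the limit $G$ is a Lie group acting effectively and properly by isometries on $Y$. By the slice theorem for proper isometric actions (cf.\ \cite[\S 1]{Grove2000}), $Y$ decomposes into smooth orbit-type submanifolds, and the quotient $Y/G$ inherits a canonical stratification whose top piece is a smooth manifold of the same dimension as a slice through a principal orbit. Gluing these local pictures as $x_\infty$ varies over $X$ produces the global stratification $X=S_0(X)\supset \cdots \supset S_k(X)$ with $k=\dim_H X$, establishing (\ref{limit-bounded-sec-1}). On each stratum the restriction of $\pi_\infty$ is a smooth Riemannian submersion, and the metric $h_{k-i}$ descends from the horizontal distribution on $Y$; its $C^{1,\alpha}$-regularity is inherited from that of the metric on $Y$, proving (\ref{limit-bounded-sec-2}).

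Third, for the moreover part I argue by contradiction and rescaling. Suppose there are sequences $X_j$ with $H^k(X_j)\ge v$, $\diam(X_j)\le D$, and points $p_j\in S_i(X_j)\setminus S_{i+1}(X_j)$ with $d(p_j,S_{i+1}(X_j))\ge \epsilon$ but $r_h(p_j)\to 0$. Rescale $X_j$ by $\lambda_j=r_h(p_j)^{-1}$; the approximating manifolds then satisfy $|\op{sec}|\le \lambda_j^{-2}\to 0$, while $d(p_j,S_{i+1})\to \infty$ in the new scale. Applying the first two steps to the rescaled covers, a pointed subsequential limit $(X_\infty,p_\infty)$ is locally the quotient of flat Euclidean space by an isometric action of a Lie group, with $p_\infty$ on a principal orbit and at infinite distance from every lower-dimensional orbit. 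A neighborhood of $p_\infty$ in $X_\infty$ is therefore isometric to an open subset of $\mathbb R^{k-i}$, whose harmonic radius at $p_\infty$ is infinite. Proposition \ref{continuity-of-harmonic-radius} then forces $r_h(p_j)\to \infty$ in the rescaled metric, contradicting $r_h(p_j)=1$ after rescaling.

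The main obstacle is upgrading Fukaya's original conclusion (a $C^1$-manifold with continuous metric inducing a $C^{1,\alpha}$ distance) to a genuine $C^{1,\alpha}$-Riemannian structure on each stratum, uniformly in the sequence. This requires the descended metric to inherit the $C^{1,\alpha}$-regularity of the $G$-horizontal distribution on $Y$, which the two-sided sectional curvature bound together with the injectivity radius bound on the covers guarantees via Theorem \ref{thm-An-convergence}. The rescaling step also depends on checking that the orbit-type strata behave well under rescaling, which is where the diameter and volume bounds are used to keep the rescaled limit non-degenerate at $p_\infty$.
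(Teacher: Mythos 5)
Your reduction to the normal covers and the stratification by orbit types is broadly compatible with the paper's strategy (the paper uses the exponential-map pullbacks and the Ricci-flow smoothings $g_i(t)$ so that the slice theorem is applied to a genuinely smooth isometric action; applying it directly on the $C^{1,\alpha}$ limit $Y$, which may have no exponential map, is a gap you could repair the same way). The decisive problem, however, is in your proof of the ``moreover'' statement. After rescaling by $\lambda_j=r_h(p_j)^{-1}$ you obtain a limit which is locally a quotient $\mathbb R^n/G_\infty$ with $p_\infty$ on a principal orbit at infinite distance from all lower strata, and you conclude that a neighborhood of $p_\infty$ is isometric to an open subset of $\mathbb R^{k-i}$, hence has infinite harmonic radius. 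This implication is false: a quotient of flat Euclidean space by a free isometric action need not be flat anywhere. For instance, the screw-motion action of $\mathbb R$ on $\mathbb R^3$, $t\cdot(x,z)=(x+t,e^{i\alpha t}z)$, has only principal orbits (the singular stratum is empty), yet the quotient surface has strictly positive curvature; by O'Neill the lower curvature bound passes to the quotient but the upper bound is governed by the integrability tensor, i.e.\ by how fast group elements rotate the normal space relative to their displacement. So in your blow-up limit the harmonic radius at $p_\infty$ can perfectly well equal $1$, and no contradiction arises. (There is also a secondary circularity: to transfer harmonic-radius information between the rescaled quotients and their limit via Proposition \ref{continuity-of-harmonic-radius} you would need two-sided Ricci or curvature bounds on the quotient metrics, which is precisely what is not yet known at that stage.)

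This missing control of the rotational part of the group elements is exactly where the paper's proof does its real work and where the non-collapsing hypothesis $H^k(X)\ge v$ enters. The paper replaces Fukaya's rate $(r/t)_p(G)$ by the normal rate $(r/t)^\perp_p(G)$ in Lemma \ref{lem-curvature-group-action-*}, so that an unbounded upper curvature of the stratum forces this ratio to blow up; taking suitable powers $\gamma_j^{n_j}$ of the offending elements produces in the limit a continuous family of isotropy elements acting nontrivially on the normal space of the orbit, and by the slice theorem the limit quotient then drops dimension, contradicting non-collapsing. That yields a uniform two-sided curvature bound $C(n,v,D)$ on the stratum away from $S_{i+1}$, after which Cheeger--Gromov (as $t\to 0$ for the smoothed metrics) gives the harmonic-radius bound. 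Your argument never engages with this rotation-versus-displacement dichotomy, and without it the blow-up step cannot close.
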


Note that the original proof of \cite[Theorem 0.9]{Fukaya1988} depends on orthonormal frame bundles of $(M_i,g_i)$, which do not generally admit a uniform sectional curvature bound such that only a $C^{0,\alpha}$-regularity can be derived on $X$. In the following we will give a different and simple proof from the view point of local normal covers.

The key point behind is the following observation \cite[Lemma 7.2]{Fukaya1988} by Fukaya.

\begin{lemma}[{cf. \cite[Lemma 7.2]{Fukaya1988}}]\label{lem-curvature-group-action-*}
	Let $(M,g)$ be a Riemannian manifold whose sectional curvature $b\le \op{sec}_g\le a$. Assume that there is a proper and free isometric action by $G$ on $(M,g)$. Let
	\begin{equation}\label{def-rate-normal-deviatiion-*}
	(r/t)^\perp_p(G)=\sup \left\{\left.\|r^\perp_p(g)\|/d(p,g(p))\,\right|\, g\neq e\in G, \; \|r_p^\perp(g)\|\text{ is well-defined} \right\},
	\end{equation}
	where $\|r^\perp_p(g)\|=\sup \{\measuredangle(dg(v),P(v))\,|\, v\in T_p^\perp G(p)\}$ is defined when $d(p,g(p))$ is less than the injectivity radius at $p$, and $P$ is the parallel transport from $T_{p}M$ to $T_{g(p)}M$ along the unique minimal geodesic.
	
	Then the sectional curvature of the quotient $M/G$ at $\pi(p)$ is bounded by
	$$b \le \op{sec}_{\pi(p)}\le a + 6((r/t)^\perp_p(G))^2.$$
\end{lemma}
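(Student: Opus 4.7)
The plan is to realize $\pi\colon (M,g)\to M/G$ as a Riemannian submersion and to apply O'Neill's curvature formula, reducing the problem to an estimate on the A-tensor that in turn is controlled by the infinitesimal rate of normal deviation of $G$.

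Since $G$ acts freely, properly, and by isometries, the quotient $M/G$ inherits a Riemannian structure and $\pi$ is a Riemannian submersion with vertical distribution $\mathcal{V}$ tangent to $G$-orbits and horizontal distribution $\mathcal{H}=\mathcal{V}^\perp$. For orthonormal horizontal $X,Y\in\mathcal{H}_p$, O'Neill's formula reads
\begin{equation*}
\op{sec}_{M/G}(\pi_*X,\pi_*Y)=\op{sec}_M(X,Y)+3\,|A_XY|^2,\qquad A_XY=\frac12\mathcal{V}([X,Y]),
\end{equation*}
which immediately yields the lower bound $\op{sec}_{M/G}\ge b$.

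For the upper bound, I would express the A-tensor through Killing fields generating the $G$-action. Every vertical vector at $p$ equals $K(p)$ for some Killing field $K$ on $M$; extending $X,Y$ as basic ($G$-invariant) horizontal fields ensures $[X,K]=[Y,K]=0$, and then $g(Y,K)\equiv 0$ combined with the Killing identity $g(\nabla_XK,Y)+g(\nabla_YK,X)=0$ gives
\begin{equation*}
g(A_XY,K(p))=-g(Y,\nabla_XK).
\end{equation*}
Choosing $K$ so that $K(p)=A_XY$ turns this into $|A_XY|^2\le |\nabla_XK|$. The bound on $|\nabla_XK|$ comes from identifying it with the infinitesimal rate $(r/t)_p^\perp(G)$: for the flow $\phi_s$ of $K$ and unit $v\in\mathcal{H}_p$, one has
\begin{equation*}
d\phi_s(v)-P_sv=s\nabla_vK+O(s^2),\qquad d(p,\phi_s(p))=s|K(p)|+O(s^2),
\end{equation*}
and since $\nabla K$ is skew-symmetric ($\nabla_vK\perp v$), letting $s\to 0$ yields $|\nabla_vK|\le |K(p)|\cdot(r/t)_p^\perp(G)$. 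Substituting back produces $|A_XY|\le (r/t)_p^\perp(G)$, and O'Neill's formula delivers the stated upper bound; the stated constant $6$ absorbs the O'Neill factor of $3$ together with the slack arising when one compares parallel transport along the minimal geodesic from $p$ to $\phi_s(p)$ with transport along the orbit curve.

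The principal obstacle is the last step, where the Fukaya rate $(r/t)_p^\perp(G)$ is a supremum over discrete $g\in G$ but must control the infinitesimal quantity $|\nabla_vK|$. This requires checking that for small $s$ the minimal geodesic from $p$ to $\phi_s(p)$ and the orbit curve $s\mapsto\phi_s(p)$ agree to first order, so that parallel transports along them differ only to order $O(s^2)$, and then carefully tracking numerical constants (or employing a slightly looser bound, as Fukaya does) in order to land on the clean factor $6$ in the final inequality.
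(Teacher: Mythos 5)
Your proof is correct, and it is worth noting how it relates to what the paper actually does: the paper gives no direct argument at all, but merely observes (via the tubular neighborhood theorem) that only the restriction of $dg$ to the normal space $T_p^\perp G(p)$ acts on horizontal directions, and then invokes Fukaya's Lemma 7.2 ``line-by-line''; your proposal instead reconstructs the estimate from scratch through O'Neill's formula and Killing fields, which is a self-contained route and, in fact, the mechanism underlying the cited lemma. The two points you flag as obstacles are both fillable and are exactly what the paper leaves implicit. First, the Killing field $K$ with $K(p)=A_XY$ exists because $G$ acts properly and hence is a Lie group whose action fields span the vertical space at $p$ (if the orbit through $p$ is zero-dimensional the quotient map is a local isometry near $p$ and the upper bound is trivial), and for small $s$ the elements $\phi_s\neq e$ of the corresponding one-parameter subgroup are admissible competitors in \eqref{def-rate-normal-deviatiion-*}, so the infinitesimal quantity is indeed dominated: $|\nabla_XK|/|K(p)|=\lim_{s\to 0}\|r_p^\perp(\phi_s)\|/d(p,\phi_s(p))\le (r/t)^\perp_p(G)$. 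Second, the discrepancy between parallel transport along the minimal geodesic from $p$ to $\phi_s(p)$ and along the orbit arc is a holonomy error bounded by $\sup|{\operatorname{Rm}}|$ times the area of the thin bigon they bound, which is $o(s)$, so it disappears in the limit. With these in place your chain $|A_XY|^2=-g(Y,\nabla_XK)\le|\nabla_XK|\le|K(p)|\,(r/t)^\perp_p(G)=|A_XY|\,(r/t)^\perp_p(G)$ gives $|A_XY|\le (r/t)^\perp_p(G)$ and hence $\op{sec}_{\pi(p)}\le a+3((r/t)^\perp_p(G))^2$, which is even sharper than the stated constant $6$; the lower bound is immediate from O'Neill as you say.
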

\begin{proof}
	Here the only difference from original \cite[Lemma 7.2]{Fukaya1988} is the rate of infinitesimal angle deviation \eqref{def-rate-normal-deviatiion-*}. In \cite[Lemma 7.2]{Fukaya1988} the same conclusion was proved for $$(r/t)_p(G)=\sup \left\{\left.\frac{\|r_p(g)\|}{d(p,g(p))}\,\right|\, g\neq e\in G, \|r_p(g)\|\text{ is well-defined} \right\},$$
	where $\|r_p(g)\|=\sup \{\measuredangle(dg(v),P(v))\,|\, v\in T_pM\}$ is defined when $d(p,g(p))$ is less than the injectivity radius at $p$.
	
	Note that by the tubular neighborhood theorem (e.g., see \cite{Grove2000}), only the part of $dg$ on the normal space $T_p^\perp G(p)$ to the orbit $G(p)$ makes effect on the horizontal directions. Lemma  \ref{lem-curvature-group-action-*} follows the proof of \cite[Lemma 7.2]{Fukaya1988} line-by-line.
\end{proof}

\begin{proof}[Proof of Theorem \ref{thm-limit-bounded-sec}]
	~
	
	Let us consider the smoothed metrics $g_i(t)$ on $(M_i,g_i)$, which are the solutions of Hamilton's Ricci flow equation with the initial condition $g_i(0)=g_i$. By \cite{Shi1989-1} (cf. \cite{Rong1996}), $g_i(t)$ also admits a uniform sectional curvature bound $1+C(n)t$ and a uniform higher regularities \eqref{ineq-smoothing-sec}.
	
	By passing to a subsequence, we assume that $(M_{i},g_{i}(t))\overset{GH}{\longrightarrow}X_t$ as $i\to \infty$ for any fixed $t>0$.
	Then $X_t$ is $e^{2t}$-bi-Lipschitz to $X$, and  admits a stratification
	$S_0(X_t)\supset S_1(X_t)\supset \cdots \supset S_k(X_t)$ for $0\le k\le n$ such that each strata $R^{k-i}=S_i(X_t)\setminus S_{i+1}(X_t)$ is a smooth Riemannian $(k-i)$-manifold with sectional curvature $\ge 1+\varkappa(t\,|\,n)$.
		
	Indeed, for $x_i\in (M_i,g_i(t))$ that approaches $x\in X_t$, let us consider the equivariant limit spaces $(Y_t,  g^*(t), G_t)$ of the $\frac{\pi}{2}$-ball in the tangent space $T_{x_i}M_i$ of $x_i\in (M_i,g_i(t))$ with the pullback metric $g_i^*(t)$ via the exponential map:
	\begin{equation}\label{graph-tangent-space}
	\begin{CD}
	(B_{\frac{\pi}{2}}(0_i), g_i^*(t), 0_i, \Gamma_i) @>GH>i\to \infty> (Y_t, g^*(t), y_t, G_t)\\
	@V\exp VV @V\pi_t VV\\
	(B_{\frac{\pi}{2}}(x_i,g_i(t)), x_i) @>GH>i\to \infty> B_{\frac{\pi}{2}}(x,X_t)=Y_t/G_t,
	\end{CD}
	\end{equation}
	where $\Gamma_i$ is the pseudo-group action by the local fundamental group of $B_{\frac{\pi}{2}}(x_i,g_i(t))$, and $B_{\frac{\pi}{2}}(x,X_t)$ is equipped with its length metric. Then $Y_t$ is a smooth Riemannian manifold with sectional curvature $|\op{sec}_{Y_t}|\le 1+C(n)t$.
	
	Since the pseudo-group $G_t$ acts on $Y_t$ by isometries, by the standard theory of isometric actions on Riemannian manifolds (e.g., \cite{Alexandrino2015}) the orbit space $Y_t/G_t$ admits a standard stratification by isotropy types. So is $X_t$. Hence we derive \eqref{limit-bounded-sec-1}.
	
	For \eqref{limit-bounded-sec-2}, it suffices to show that the sectional curvature at points in $S_i(X_t)$ $\epsilon$-definitely away from $S_{i+1}(X_t)$ is bounded uniformly by a constant $C(n,v,D)$. Then \eqref{limit-bounded-sec-2} for the original limit space $X$ immediate follows from Cheeger-Gromov's convergence theorem applied on $X_t$ as $t\to 0$.
	
	Let us argue by contradiction. Suppose there is a sequence of limit spaces $(X_{j,t},x_j)$ such that $x_j\in R^{k-i}(X_{j,t})$, $d(x_j, S_{i+1}(X_{j,t}))\ge \epsilon$, and $t=t_j\in(0,T(n)]$ is arbitrary chosen, but the sectional curvature of $R^{k-i}(X_{j,t})$ at $x_j$ is unbounded as $j\to \infty$. Since $X_t$ is an Alexandrov space with curvature $\ge C(n)$,
	$$\max_{v,w\in T_{x_j}} \op{sec}_{X_{j,t}}(v\wedge w)\to \infty \text{ as $j\to +\infty$}.$$
	
	By passing to a subsequence, let us consider the equivariant convergence of limit spaces $(Y_{j,t},y_j)$ in \eqref{graph-tangent-space} for each $X_{j,t}$.
	\begin{equation}\label{graph-tangent-limit}
	\begin{CD}
	(Y_{j,t}, g_{j}^*(t), y_j, G_{j,t}) @>C^{1,\alpha}>j\to \infty> (Y, g^*, y, G)\\
	@V\pi_{j,t}VV @V\pi_{\infty} VV\\
	(B_{\frac{\pi}{2}}(x_j,X_{j,t}), x_j) @>GH>j\to \infty> (Y/G, x).
	\end{CD}
	\end{equation}
	
	In order to apply Lemma \ref{lem-curvature-group-action-*}, we first point out that the Lie group $G_{j,t}$ can be reduced to the case of free action.
	
	Indeed, let $G_{j,0}$ be the identity component of  $G_{j,t}$. Instead of $G_{j,t}$, we consider the actions by $G_{j,0}$, which by the Heintze-Margulis lemma (e.g., \cite[Lemma 4.1]{Fukaya1988}, cf. \cite{Gromov1978}, \cite{BuserKarcher81}) is a niloptent Lie group. Moreover, by \cite[Lemma 5.1]{Fukaya1988} the isotropy group $G_{j,0,y_j}$ of $G_{j,0}$ lies in the center of $G_{j,0}$. Hence the isotropy group of $G_{j,0}$ in a conjugacy class is unique on $Y_{j,t}$. It follows that the union of all orbits of the same isotropy type $G_{j,0,y_j}$, $\pi_{j,t}^{-1}(R^{k-i}(Y_{j,t}/G_{j,t}))$, is the fixed-point set of $G_{j,0,y_j}$, and hence is totally geodesic. Moreover, the isometric action $G_{j,t}$ on $\pi_{j,t}^{-1}(R^{k-i}(Y_{j,t}/G_{j,t}))$ can be reduced to the quotient group $G_{j,0}/G_{j,0,y_j}$ that acts effectively and freely.
	
	By our assumption, $Y_{j,t}/G_{j,0}$ is not collapsing. For simplicity we assume that the action of $G_{j,0}$ itself is free.
	
	We claim that
	\begin{enumerate}
		\item \label{claim-group-deviation-1} there is a sequence $\gamma_j\in G_{j,0}$ such that $\frac{\|r^\perp_{y_j}(\gamma_j)\|}{d(y_j,\gamma_j(y_j))}\to\infty $. By a suitable choice of $n_j\to \infty$, $d(\gamma_j^{n_j}(y_j),y_j)\to 0$ and $\left.d\gamma_j^{n_j}\right|_{y_j}$ maps the normal space of $G_{j,0}(y_j)$ by a uniform and definite deviation away from the parallel transformation.
		
		\item \label{claim-group-deviation-2} the normal space of $G_{0}(y)$ has the same dimension as that of $G_{j,0}(y_j)$.
		
		\item \label{claim-group-deviation-3} the limit $\eta$ of $\gamma_j^{n_j}$ lies in the isotropy group $G_{0,y}$ at $y$, whose differential admits a non-trivial transformation on the normal space of $G_{0}(y)$.
	\end{enumerate}
	
	The claims above will yield a contradiction immediately. Indeed, let $\eta_s=\lim \gamma_{j}^{[sn_j]}$ for each $s\in [0,1]$ and $\gamma_j$ in \eqref{claim-group-deviation-1}. Then $\eta_s$ forms a continuous path in $G_{0,y}$ which by \eqref{claim-group-deviation-3} acts on $T_{y}^\perp G_{0}(y)$ non-trivially. By \eqref{claim-group-deviation-2} and the slice theorem, $Y/G_0$ has lower dimension than $Y_{j,t}/G_{j,0}$, a contradiction to that they are assumed to be non-collapsing.
	
	The verification of \eqref{claim-group-deviation-1}-\eqref{claim-group-deviation-3}:
	
	By Lemma \ref{lem-curvature-group-action-*}, the unboundedness on the upper curvature at $x_j\in X_{j,t}$ implies that $(r/t)_{y_j}^\perp(G_{j,0})$ blows up as $j\to \infty$. Hence \eqref{claim-group-deviation-1} holds by the definition \eqref{def-rate-normal-deviatiion-*} of $(r/t)_{p}^\perp(G)$.
	
	Since $(Y_{j,t},g_j^*(t))$ converges to $(Y,g^*)$ in the $C^{1,\alpha}$-topology, let us identify $B_{\frac{\pi}{4}}(y_j,g_j^*(t))$ with $B_{\frac{\pi}{4}}(y,g^*)$ via a suitable diffeomorphism.  Then the isometric actions by one-parameter subgroups of $G_{j,0}$ $C^1$-converges to that of $G_0$ on $B_{\frac{\pi}{4}}(y,g^*(t))$, which implies $\dim G_0(y)\ge \dim G_{j,0}(y_j)$.
	
	Because $Y_{j,t}/G_{j,0}$ is assumped to be non-collapsing, the orbit $G_{j,0}(y_j)$ and $G_{0}(y)$ must have same dimension, which implies \eqref{claim-group-deviation-2}. Moreover, the normal space of $G_{j,0}(y_j)$ converges to that of $G_{0}(y)$, and the limit of $\gamma_j^{n_j}$ in \eqref{claim-group-deviation-1} is a nontrivial element $\eta$ in $G_{0,y}$ with a non-trivial deviation on the normal space of $G_{0}(y)$, i.e., \eqref{claim-group-deviation-3}.
\end{proof}

We point out that, though the manifolds $(M_i,g_i)$ with $|\Ric_{(M_i,g_i)}|\leq n-1$ and $(\delta,r)$-Reifenberg local covering geometry still can be smoothed to $g_i(t)$ via Ricci flow by Theorem \ref{thm-smoothing-ricci-flow}, the proof of Theorem \ref{thm-limit-bounded-sec} fails to work for their limit spaces, due to that the sectional curvature of $g_i(t)$ generally blows up as $t\to 0$. Therefore, instead of Cheeger-Gromov's convergence theorem, we have to construct a harmonic coordinates directly in the next two sections.
\section{Construction of the $C^{1,\alpha}$-harmonic coordinate chart in quotient spaces}
In this section we prove the technical theorem \ref{tech-theorem-*}.

We first prove an adapted harmonic coordinate chart descends to a chart on the base manifold that is almost harmonic.

 \begin{lemma}\label{almost-C1-harmonic-coordinate-chart-on-X}
 Let the assumptions be as in Theorem \ref{tech-theorem-*}. Assume that \begin{equation}\label{Hessian-control-for-yj-**}
 \|\operatorname{Hess}y^j\|_{C^{0,\alpha}(B_{r}(p))}\leq \tau, \qquad j=1,\dots,m.
 \end{equation}
Then the adapted coordinate chart $(y^1,\dots, y^n): B_r(p)\to \mathbb R^n $ at $p$ descends to a coordinate chart $(z^1,\dots, z^m):B_{2r/3}(\bar{p})\to \mathbb{R}^{m}$ at $\bar p=\pi(p)$ such that $y^j=z^j\circ \pi$, and the metric tensor $h$ on $B_{2r/3}(\bar{p})\subset X$ expressed in $h_{st}=h(\bar\nabla z^s,\bar\nabla z^{t})$ satisfy
 \numberwithin{enumi}{theorem}
 \begin{enumerate}
 \item\label{C0-estimate-for-h}
 $e^{-Q}\delta_{st}\leq h_{st}\leq e^{Q}\delta_{st}$,
 \item\label{C1-estimate-for-h}
 $(2r/3)^{1+\alpha}\left\|\bar\nabla z^{j} h_{st}\right\|_{C^{0,\alpha}\left(B_{2r/3}(\bar{p})\right)}\leq e^{Q}$, and
 \item\label{control-of-Laplace-of-zj} $z^j$ ($j=1,\dots,m$) is almost harmonic in the sense that
 $$\left\|\op{Hess}_h z^{j}\right\|_{C^{0,\alpha}\left(B_{2r/3}(\bar{p})\right)}\leq \Psi(\tau\,|\,n,\alpha,Q),$$
 \end{enumerate}
where $\bar \nabla z^j$ is the gradient of $z^j$ with respect to $h$, the $C^{0,\alpha}$-norm is taken in the coordinate chart $(z^1,\dots, z^m)$, and $\Psi(\tau\,|\,n,\alpha,Q)$ is a function depending on $\tau, n, \alpha, Q$ such that $\Psi(\tau\,|\,n,\alpha,Q)\to 0$ as $\tau\to 0$ with fixed $n,\alpha,Q$.
 \end{lemma}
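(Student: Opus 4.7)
The plan is to let the first $m$ adapted coordinates $y^1,\dots,y^m$ descend to functions $z^j$ on $B_{2r/3}(\bar p)$, and then transport every estimate from $Y$ to $X$ via the Riemannian submersion $\pi$ together with horizontal lifting. First I would define $z^j(\bar q):=y^j(q)$ for any $q\in\pi^{-1}(\bar q)\cap B_r(p)$; the adaptedness hypothesis that $y^j$ is constant on $\pi$-fibres makes this unambiguous. To see the definition extends to all of $B_{2r/3}(\bar p)$, I would take any minimizing segment $\bar\gamma$ in $X$ from $\bar p$ to $\bar q$, horizontally lift it starting at $p$, and note that because $\pi$ is a Riemannian submersion the lift has the same length as $\bar\gamma$ and therefore stays inside $B_{2r/3}(p)\subset B_r(p)$. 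Linear independence of $\{dz^j\}_{j=1}^{m}$ at $\bar q$ follows from that of $\{dy^j=\pi^{*}dz^j\}_{j=1}^{m}$ at the preimage, so $(z^1,\dots,z^m)$ is an honest chart.

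For the metric bounds \eqref{C0-estimate-for-h} and \eqref{C1-estimate-for-h}, the key identity is
\[
h_{st}(\bar q)=h(\bar\nabla z^s,\bar\nabla z^t)(\bar q)=g(\nabla y^s,\nabla y^t)(q)=g^{st}(q),\qquad 1\le s,t\le m,
\]
valid for every preimage $q$ because $\nabla y^s$ is the horizontal lift of $\bar\nabla z^s$. The remark after Definition \ref{harmonic-radius} converts the $(\alpha,Q)$-$C^{1,\alpha}$-control of $g_{ij}$ into the same kind of control of $g^{ij}$ in the $y$-chart, which gives \eqref{C0-estimate-for-h} immediately. To obtain \eqref{C1-estimate-for-h}, I would express $\bar\nabla z^k h_{st}$ at $\bar q$ as the horizontal derivative $\nabla y^k g^{st}$ at the lift $q$ (well defined because $g^{st}$ is constant along $\pi$-fibres, being a pairing of horizontal lifts), and transfer Hölder differences across $B_{2r/3}(\bar p)$ by horizontally lifting short segments and using $d_g(q_1,q_2)\le d_h(\bar q_1,\bar q_2)$.

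For the Hessian estimate \eqref{control-of-Laplace-of-zj} I would invoke the standard O'Neill identity: if $\tilde X,\tilde Y$ are horizontal lifts of $\bar X,\bar Y$, then $\nabla^{g}_{\tilde X}\tilde Y=\widetilde{\nabla^{h}_{\bar X}\bar Y}+\tfrac12[\tilde X,\tilde Y]^{V}$. Since $y^j=z^j\circ\pi$ annihilates vertical vectors, the vertical correction drops out and one obtains
\[
\operatorname{Hess}_{g}y^j(\tilde X,\tilde Y)=\operatorname{Hess}_{h}z^j(\bar X,\bar Y)\circ\pi.
\]
Hence $|\operatorname{Hess}_{h}z^j|_{h}(\bar q)$ equals the norm of $\operatorname{Hess}_{g}y^j$ restricted to the horizontal subspace at $q$, which is dominated by $\|\operatorname{Hess}_{g}y^j\|_{C^0}\le\tau$; the Hölder seminorm transfers the same way along horizontal lifts.

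The main obstacle I expect is the careful bookkeeping when turning $y$-coordinate $C^{0,\alpha}$-norms on $Y$ into $z$-coordinate $C^{0,\alpha}$-norms on $X$: in addition to comparing $d_g$ to $d_h$, one has to control how the non-adapted coordinates $y^{m+1},\dots,y^n$ vary along horizontal lifts of paths in $X$. The $(\alpha,Q)$-$C^{1,\alpha}$-control of $g$ in the $y$-chart forces the vertical components of $\partial/\partial y^{m+1},\dots,\partial/\partial y^n$ to be $C^{0,\alpha}$-close to an orthonormal vertical frame, and these residual contributions must be absorbed into the final $\Psi(\tau\mid n,\alpha,Q)$ factor, which is precisely where the loss from $Q$ to $2Q$ in the conclusion of Theorem \ref{tech-theorem-*} arises.
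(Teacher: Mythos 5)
Your proposal is correct and follows essentially the same route as the paper: adaptedness lets $y^1,\dots,y^m$ descend to $z^j$, the Riemannian submersion identifies $h_{st}$ with $g(\nabla y^s,\nabla y^t)$ at horizontal lifts, and the O'Neill identity (whose vertical $A$-term is killed because $y^j=z^j\circ\pi$ annihilates vertical vectors) transfers the Hessian bound. The only small deviations are that the paper obtains (\ref{C1-estimate-for-h}) directly from the identity $\bar\nabla z^{j}h_{st}=\operatorname{Hess}y^{s}(\nabla y^{j},\nabla y^{t})+\operatorname{Hess}y^{t}(\nabla y^{s},\nabla y^{j})$, so the constant $e^{Q}$ is immediate with no inverse-metric bookkeeping, and that the loss from $Q$ to $2Q$ you mention actually occurs in the Dirichlet-problem and coordinate-change step of the proof of Theorem \ref{tech-theorem-*}, not in this lemma.
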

 \begin{proof}

 \par
 Because $y^j$ ($j=1,\dots,m$) takes the same value on each fiber, $z^j=y^j\circ\pi^{-1}$ is well-defined.

 Let $g_{st}=g(\nabla y^{s},\nabla y^{t})$, where $\nabla$ is the Levi-Civita connection on $(Y,g)$. From the definition of $C^{1,\alpha}$-harmonic coordinate chart, one has
 \begin{equation}\label{C1-norm-for-metric-g-1}
 \begin{cases}
 e^{-Q}\delta_{st}\leq g_{st}\leq e^{Q}\delta_{st};\\
 r^{1+\alpha}\|\partial_{j}g_{st}\|_{C^{0,\alpha}(B_{r}(p))}\leq e^{Q},
 \end{cases}
 \end{equation}
where $\partial_{j}=\nabla y^{j}$ and the $C^{0,\alpha}$-norm is taken in the coordinates $\{y^{j}\}$.

Since $\pi$ is a Riemannian submersion, the gradient $\nabla y^{j}$ is a horizontal vector field on $Y$ and
\begin{equation}\label{pi-relation-vector-field}
\pi_{*}(\nabla y^{j})=\bar{\nabla}z^{j}, \quad j=1,\dots,m,
\end{equation}
where $\bar{\nabla}$ denotes the Levi-Civita connection on $(X,h)$, and $\pi_*$ is the tangent map of $\pi$. Define $h_{st}=h(\bar{\nabla} z^{s},\bar{\nabla}z^{t})$. Then
\begin{equation}\label{pi-relation-metric-tensor}
h_{st}=g_{st}, \quad s,t=1,\dots, m.
\end{equation}
It follows from (\ref{C1-norm-for-metric-g-1}) and \eqref{pi-relation-metric-tensor} that $h_{st}$ satisfies (\ref{C0-estimate-for-h}), and thus  $(z^{1},\dots,z^{m}):B_{2r/3}(\bar{p})\to \mathbb{R}^{m}$ is a coordinate chart at $\bar{p}$.

What remains is to verify the estimates (\ref{C1-estimate-for-h}) and (\ref{control-of-Laplace-of-zj}). First, by (\ref{pi-relation-vector-field}) and \eqref{pi-relation-metric-tensor}, we derive that for each $j=1,\dots,m,$
\begin{equation}\label{C1-estimate-using-Riemmanian-submersion}
\begin{aligned}
\bar{\nabla} z^{j}(h_{st})(\bar{q})&
=\pi_{*}(\nabla y^{j})(h(\bar{\nabla} z^{s},\bar{\nabla}z^{t}))(\pi(q))
=\nabla y^{j}(g(\nabla y^{s},\nabla y^{t}))(q)\\&
=\text{Hess } y^{s}(\nabla y^{j},\nabla y^{t})(q)+
\text{Hess } y^{t}(\nabla y^{s},\nabla y^{j})(q)
\end{aligned}
\end{equation}
for any $\bar{q}\in B_{2r/3}(\bar{p})$ and $q\in \pi^{-1}(\bar{q})$. Then \eqref{Hessian-control-for-yj-**} together with (\ref{C1-estimate-using-Riemmanian-submersion}) yields (\ref{C1-estimate-for-h}).
\par
Secondly, by (\ref{pi-relation-vector-field}), we have
\begin{equation}\label{formula-for-Riemannian-submersion-1}
\nabla_{\nabla y^{k}}\nabla y^{l}=\widetilde{\bar{\nabla}_{\bar{\nabla}z^{k}}\bar{\nabla}z^{l}}
+A(\nabla y^{k},\nabla y^{l}),\qquad \text{for any $k,l=1,\dots,m$}
\end{equation}
where $A(\cdot,\cdot)=[\cdot, \cdot]^\top$ denotes the horizontal integral tensor of Riemannian submersion $\pi$ that takes values tangent to the fibers, and for any smooth vector field $Z$ on $X$, $\tilde{Z}$ denotes its horizontal lifting on $Y$.
Combing (\ref{pi-relation-vector-field}) with (\ref{formula-for-Riemannian-submersion-1}) and by the fact that $\pi$ is Riemannian submersion, we have
\begin{equation}\label{Hess-zj-equal-to-Hess-yj}
\operatorname{Hess}z^{j}(\bar{\nabla}z^{k},\bar{\nabla} z^{l})
=\operatorname{Hess}y^{j}(\nabla y^{k},\nabla y^{l}) \qquad \text{for any $j,k,l=1,\dots,m$.}
\end{equation}
Now (\ref{Hess-zj-equal-to-Hess-yj}) together with (\ref{Hessian-control-for-yj-**}) yields (\ref{control-of-Laplace-of-zj}). This complete the proof of Lemma \ref{almost-C1-harmonic-coordinate-chart-on-X}.
 \end{proof}

\begin{remark}\label{rem-regularity-passed-by-submersion}
In the proof of Lemma \ref{almost-C1-harmonic-coordinate-chart-on-X}, it follows from \eqref{pi-relation-metric-tensor}-\eqref{C1-estimate-using-Riemmanian-submersion} that the curvature tensor of $(X,h)$ and its covariant derivatives of any order satisfy the same regularity as $(Y,g)$ up to a definite ratio depending on $n$ and $Q$. We will apply the fact in proving (\ref{thm-bounded-local-covering-geometry}.1.a-c) in Theorem \ref{thm-bounded-local-covering-geometry}.
\end{remark}

Next, let us prove Theorem \ref{tech-theorem-*} by solving the Dirichlet problem with the boundary condition $z^j$.
\begin{proof}[Proof of Theorem \ref{tech-theorem-*}]~
\par Let $\varphi:=(z^{1},\dots,z^{m}):B_{2r/3}(\bar{p})\to \mathbb{R}^{m}$ be a coordinate chart at $\bar{p}$ provided by Lemma \ref{almost-C1-harmonic-coordinate-chart-on-X}.
Let $0\in \mathbb{R}^{m}$ be the origin. By a shift in value, we assume that $\varphi(\bar{p})=0$. Let $\Omega\subseteq \mathbb{R}^{m}$ denote the image set of $\varphi$ and $\varphi^{-1}:\Omega\to (B_{2r/3}(\bar{p}),h)$ the inverse map of $\varphi$. Let us pullback the metric $h$ on $X$ to $\Omega$ by $\varphi^{-1}$, where we still denote $(\varphi^{-1})^*h$ on $\Omega$ by $h$, and identify $(B_{2r/3}(\bar{p}),h)$ with $(\Omega,h)$ by $\varphi^{-1}$.

From (\ref{C0-estimate-for-h}) and by $0<Q\le 10^{-2}$, we have the Euclidean ball $B_{3r/5}^m(0)\subseteq \Omega$. Let $x^{j}$ be the solution of the following Dirichlet problem:
\begin{equation*}
\begin{cases}
 \Delta_{h} x^j=0, &\text{in}\ B_{3r/5}^{m}(0);\\
 x^j=z^j, &\text{on}\ \partial B_{3r/5}^{m}(0).
 \end{cases}
  \end{equation*}
Note that (\ref{C0-estimate-for-h}) yields that $B_{r/2}(\bar{p})\subseteq \varphi^{-1}(B_{3r/5}^{m}(0))$.
We claim that
\begin{equation}\label{desired-harmonic-coordinate-chart}
(x^1,\dots,x^m):\varphi^{-1}(B_{3r/5}^{m}(0))\to \mathbb{R}^{m}
\end{equation}
yields the desired harmonic coordinate chart in Theorem \ref{tech-theorem-*}, i.e.,
the following $C^{1,\alpha}$-estimates hold:
\begin{equation}\label{C1-norm-for-metric-h}
\begin{cases}
e^{-2Q}\delta_{st}\leq \bar{h}_{st}\leq e^{2Q}\delta_{st};\\
(r/2)^{1+\alpha}\left\|\frac{\partial}{\partial x^{j}}\bar{h}_{st}\right\|_{C^{0,\alpha}(B_{r/2}(\bar{p}))}\leq e^{2Q},
\end{cases}
\end{equation}
where $\bar{h}_{st}=h(\frac{\partial}{\partial x^{s}},\frac{\partial}{\partial x^{t}})$ and $C^{0,\alpha}$-norm is taken in the coordinates $\{x^j\}$.

Indeed, let us consider the functions $w^j=x^j-z^j$, which satisfy the following equation:
\begin{equation*}
\begin{cases}
 \Delta_{h} w^j= \Delta_{h} z^j, &\text{in}\ B_{3r/5}^{m}(0);\\
 w^j=0, &\text{on}\ \partial B_{3r/5}^{m}(0).
 \end{cases}
\end{equation*}
By \eqref{control-of-Laplace-of-zj},
\begin{equation}
\left\|\Delta_{h} z^j\right\|_{C^{0,\alpha}\left(B_{3r/5}^{m}(0)\right)} \leq \Psi(\tau\,|\, n,\alpha,Q),
\end{equation}
where the $C^{0,\alpha}$-norm is taken in the coordinates $\{z^j\}$.
By \eqref{C0-estimate-for-h}-\eqref{C1-estimate-for-h} and the Schauder estimates on Euclidean balls, there exists a constant $C(r)=C(n,\alpha,Q,r)>0$ such that
\begin{equation}\label{C2-close-of-yi-and-fi}
\begin{aligned}
\left\|w^j\right\|_{C^{2,\alpha}\left(B_{3r/5}^{m}(0)\right)}\leq C(r)\left\|\Delta_{h} z^j\right\|_{C^{0,\alpha}\left(B_{3r/5}^{m}(0)\right)}
\leq C(r)\Psi(\tau\,|\, n,\alpha, Q).
\end{aligned}
\end{equation}
By the definition of $C^{2,\alpha}$-norm, it yields
 \begin{equation}\label{C2-close-of-yi-and-fi-1}
 \begin{aligned}
 \left\|x^j-z^j\right\|_{C^0\left(B_{3r/5}^{m}(0)\right)}&+\sum_{k=1}^{m}\left\|\frac{\partial x^j}{\partial z^k}-\delta_{jk}\right\|_{C^0\left(B_{3r/5}^{m}(0)\right)}+\sum_{k,l=1}^{m}\left\|\frac{\partial^{2} x^j}{\partial z^k\partial z^l}\right\|_{C^0\left(B_{3r/5}^{m}(0)\right)}
 \\&+\sum_{k,l=1}^{m}\left\|\frac{\partial^{2} x^j}{\partial z^k\partial z^l}\right\|_{C^{\alpha}\left(B_{3r/5}^{m}(0)\right)}\leq C(r)\Psi(\tau\,|\,n,\alpha,Q).
 \end{aligned}
 \end{equation}

By taking $\tau$ sufficiently small,  (\ref{C2-close-of-yi-and-fi-1}) implies that (\ref{desired-harmonic-coordinate-chart})
 is a harmonic coordinate chart.
 \par
What remains is to verify (\ref{C1-norm-for-metric-h}).  Let $B=(\frac{\partial x^j}{\partial z^k})$ denote the coordinate translation matrix. Then $B^{-1}=(\frac{\partial z^{k}}{\partial x^j})$ is the inverse matrix of $B$. Therefore
\begin{equation}\label{represent-of-inverse-matrix}
\frac{\partial z^{k}}{\partial x^j}=\left(\text{polynomial in}\left\{\frac{\partial x^s}{\partial z^t}\right\}\right)/\operatorname{det}(B),
\end{equation}
where $\operatorname{det}(B)$ is the determinant of matrix $B$.
Note that
\begin{equation}\label{metric-components-in-coordinates-translation}
\begin{aligned}
\bar{h}_{ij}=h\left(\frac{\partial}{\partial x^{i}},\frac{\partial}{\partial x^{j}}\right)=h\left(\frac{\partial z^{k}}{\partial x^i}\frac{\partial}{\partial z^{k}},\frac{\partial z^{l}}{\partial x^j}\frac{\partial}{\partial z^{l}}\right)=\frac{\partial z^{k}}{\partial x^i}\frac{\partial z^{l}}{\partial x^j}h_{kl}.
\end{aligned}
\end{equation}
Thus, (\ref{C0-estimate-for-h})-(\ref{C1-estimate-for-h}) together with (\ref{C2-close-of-yi-and-fi-1})-(\ref{metric-components-in-coordinates-translation}) yield the $C^{1,\alpha}$-estimates (\ref{C1-norm-for-metric-h}) in the coordinates $\{x^j\}$.
\end{proof}

\section{$C^{1,\alpha}$-regularity on the limit space}

From this section we begin to prove Theorem \ref{thm-bounded-local-covering-geometry}. We first show that any limit space $X\in \mathcal X_{n,r,v}^m(\delta,\rho)$ is a $C^{1,\alpha}$-Riemannian manifold for any $0<\delta\le \delta(n)$ and $0< \alpha<1$.
Without loss of generality, we assume that $r=\rho$.

Let $(M_i,g_i)\overset{GH}{\longrightarrow} X$, where $(M_i,g_i)$ are Riemannian $n$-manifolds with $|\op{Ric}_{M_i}|\le n-1$ and $(\rho,v)$-local covering geometry. By Lemma \ref{lem-rewinding-reifenberg}, we assume directly that $(M_i,g_i)$ is of $(\delta(n), \rho)$-Reifenberg local covering geometry, where $\delta(n)$ is the constant in \eqref{def-Reifenberg-*}.

As already pointed out in the introduction, it suffices to consider the equivariant Gromov-Hausdorff convergence:
\begin{equation}\label{graph-local-cover}
\begin{CD}
(\widehat{B}(x_i,\frac{\rho}{2},\rho), \hat{x}_i, \Gamma_i) @>GH>i\to \infty> (Y, \hat x, G)\\
@V\pi_iVV @V\pi_\infty VV\\
(B_{\frac{\rho}{2}}(x_i,g_i),x_i) @>GH>i\to \infty> (B_{\frac{\rho}{2}}(x,X),x),
\end{CD}
\end{equation}
where the open balls and the normal covers are endowed with their length metrics, $\Gamma_i$ is the deck transformation by $\Gamma_{\frac{1}{2},\rho}(x_i)$, and $G$ is the limit group of $\Gamma_i$.

Let $\hat g_i=\pi_i^*g_i$ be the pullback Riemannian metric on the normal cover. By Remark \ref{rem-Reifenberg-normal-cover}, $\widehat{B}(x_i,\frac{\rho}{2},\rho)$ admits a uniform $C^{1,\alpha}$-harmonic radius at points definitely away from the boundary. Then by Anderson's convergence theorem \ref{thm-An-convergence}, the interior $Y^\circ$ of $Y$ is a smooth manifold, where $\hat g_i$ gives rise to a $C^{1,\alpha}$-Riemannian metric tensor $\hat h$ on $Y^\circ$.

Note that $Y/G$ is the limit of $(B_{\frac{\rho}{2}}(x_i,g_i),x_i)$ with its length metric. By the fact that $(B_{\frac{\rho}{4}}(x,Y/G),d_{Y/G})$ endowed with the restricted metric is isometric to $(B_{\frac{\rho}{4}}(x,X),d_X)$, and the same holds for any open ball centered at an interior point $z\in Y/G$ whose radius $< \frac{\rho}{4}-\frac{d(x,z)}{2}$, we derive that the interior of $Y/G$ is isometric to $B_{\frac{\rho}{2}}(x,X)$ equipped with the length metric.

\begin{proposition}\label{prop-C1alpha-regularity}
	There is $\delta=\delta(n)>0$ such that
	 \begin{enumerate}
	 	\item \label{prop-C1alpha-regularity-a} $\pi_\infty$ in \eqref{graph-local-cover} is a $C^\infty$-smooth submersion that is Riemannian between $C^{1,\alpha}$-Riemannian metrics.
	 	\item \label{prop-C1alpha-regularity-b} $X$ is a $C^\infty$-smooth manifold $(X,h)$ with a $C^{1,\alpha}$-Riemannian metric.
	 \end{enumerate}
\end{proposition}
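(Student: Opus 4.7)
The plan is to reduce to the bounded sectional curvature case via Ricci-flow smoothing and then pass the regularity through as the smoothing time $t\to 0$. By Theorem~\ref{thm-smoothing-ricci-flow}, for each $i$ the Ricci flow starting at $g_i$ admits a smooth solution $g_i(t)$ on a uniform interval $(0,T(n,\rho)]$ with $|g_i(t)-g_i|_{g_i}\le 4t$ and $|\op{Rm}(g_i(t))|\le C(n,\rho)t^{-1/2}$. Fix $t\in(0,T(n,\rho)]$ and pass to a subsequence to obtain an equivariant limit
\begin{equation*}
(\widehat{B}(x_i,\rho/2,\rho),\hat g_i(t),\hat x_i,\Gamma_i)\xrightarrow{GH}(Y_t,\hat h(t),\hat x_t,G_t),
\end{equation*}
whose quotient produces $(B_{\rho/2}(x_i,g_i(t)),x_i)\to X_t=Y_t/G_t$. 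Since $g_i(t)$ has uniformly bounded sectional curvature and higher derivatives on the normal covers, $Y_t$ is a smooth Riemannian manifold, $G_t$ is a Lie group acting smoothly by isometries, and the bound $|g_i(t)-g_i|_{g_i}\le 4t$ yields an $e^{2t}$-bi-Lipschitz identification of $Y_t$ with $Y$ and of $X_t$ with $X$.

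Next, I analyze the structure of $X_t$ following the argument of Theorem~\ref{thm-limit-bounded-sec}. The Reifenberg hypothesis on $X$ together with the $e^{2t}$-bi-Lipschitz comparison forces every tangent cone of $X_t$ to be within $\Psi(\delta,t\,|\,n)$ of $\mathbb{R}^m$; choosing $\delta=\delta(n)$ small and then $t$ small keeps this error below the threshold used in the proof of Theorem~\ref{thm-limit-bounded-sec}. Then Lemma~\ref{lem-curvature-group-action-*} and the slice-theorem argument of that proof show that the $G_t$-isotropy along every orbit is trivial and $X_t$ coincides with its top stratum, so $\pi_t\colon Y_t\to X_t$ is a smooth Riemannian submersion between smooth Riemannian manifolds.

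To recover both conclusions of the proposition, note first that by Remark~\ref{rem-Reifenberg-normal-cover} and Theorem~\ref{thm-An-convergence}, the unflowed limit $Y$ is already a $C^{1,\alpha}$-Riemannian manifold carrying uniform harmonic coordinates coming from $\widehat{B}(x_i,\rho/2,\rho)$. Fix a small $t_0>0$ and use the $e^{2t_0}$-bi-Lipschitz correspondence to canonically identify the underlying topological manifolds of $Y$ and $Y_{t_0}$ and the limit Lie groups $G\cong G_{t_0}$. Pulling back the smooth atlas of $Y_{t_0}$ endows $Y$ with a smooth manifold structure compatible with the $C^{1,\alpha}$ metric $\hat h$, the $G$-action becomes smooth, and $\pi_\infty=\pi_{t_0}$ is a smooth submersion. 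Descending through $\pi_\infty$, $X$ inherits a smooth structure from $X_{t_0}$, and the quotient metric $h$ is $C^{1,\alpha}$ by reading $\hat h$ in harmonic coordinates adapted to $\pi_\infty$ as in Lemma~\ref{almost-C1-harmonic-coordinate-chart-on-X}.

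The main obstacle is the second step: propagating the Reifenberg property from the unflowed $X$ to tangent-cone control on $X_t$ strong enough to rule out any lower-dimensional stratum, which is what forces a quantitative choice of $\delta(n)$ and limits how large $t$ may be taken. A secondary delicacy is the coherent identification of $G$ with $G_{t_0}$ as Lie groups along the flow, which relies on the equivariance and uniformity of the Gromov--Hausdorff approximations for both the original and the smoothed metrics; without it, the smooth atlas transferred from $Y_{t_0}$ might fail to be intrinsic to $Y$.
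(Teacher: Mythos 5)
Your proposal follows the same overall strategy as the paper: smooth $g_i$ via Ricci flow (Theorem~\ref{thm-smoothing-ricci-flow}), take the equivariant limit $(Y_t,\hat h(t),G_t)$ over the normal covers, observe that $G$ and $G_t$ come from the same deck-transformation actions of $\Gamma_i$ on the same underlying manifolds $\widehat U_i$, and then transfer the smooth structure back from $Y_{t_0}/G_{t_0}$ to $Y/G$. That is exactly what the paper does in Lemma~\ref{local-regularity-limit}.

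Two points, one a misreference and one a genuine gap. First, the lemma you cite to rule out nontrivial isotropy, Lemma~\ref{lem-curvature-group-action-*}, is the Fukaya-type estimate for the upper sectional curvature of a quotient under a free action; it plays no role here. What is actually used is Lemma~\ref{lem-Euclidean-quotient}: the tangent cone of $\mathbb R^n/G$ is either Euclidean or quantitatively far from Euclidean, so the $\bigl(e^{2t}-1+\delta\bigr)$-closeness to $\mathbb R^m$ forces the slice representation at every interior point of $Y_t$ to be trivial, whence the principal orbit theorem makes $\pi_{t,\infty}$ a Riemannian submersion. Your ``slice-theorem argument'' hints at this, but attributes it to the wrong lemma.

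Second, and more importantly, your last paragraph asserts that ``descending through $\pi_\infty$, $X$ inherits a smooth structure from $X_{t_0}$'' without verifying that the locally defined charts are $C^\infty$-compatible on overlaps. Each diagram \eqref{graph-local-cover} is attached to a single $\tfrac{\rho}{2}$-ball $B_{\rho/2}(x_{j,i},g_i)$ and its normal cover, and Lemma~\ref{local-regularity-limit} only produces a local smooth structure on $B_{\rho/2}(x,Y/G)$. The statement of Proposition~\ref{prop-C1alpha-regularity}\eqref{prop-C1alpha-regularity-b} is global, and the paper devotes the second half of its proof precisely to showing that the transition maps between two such local descriptions are $C^\infty$ with matching metric tensors; this requires Theorem~\ref{thm-precompactness-b}, the fact \eqref{intersection-normal-cover} that the normal cover of the intersection $W_i(\tfrac{\rho}{2})$ covers each $\pi_{j,i}^{-1}(W_i(\tfrac{\rho}{2}))$, and the resulting commutative diagram of locally isometric maps. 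Your argument would need this step, or an explicit justification that the local equivariant limits are all coherently embedded in the global limit of $(M_i,g_i(t_0))$. You do flag the coherent identification of $G$ with $G_{t_0}$ as a delicacy, and the paper resolves it by observing that $\Gamma_i$ acts on $\widehat U_i$ identically as deck transformations for both $\hat g_i$ and $\hat g_i(t)$, so the Arzela--Ascoli argument yields the same limit group; you should make this explicit rather than leaving it as a caveat.
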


We first verify that $X$ is regular, which is a direct corollary of the following observation.
\begin{lemma}\label{lem-Euclidean-quotient}
	Let $G$ be a closed subgroup of the isometry group of $\mathbb R^n$. Then any tangent cone of the quotient space $\mathbb R^n/G$ is either isometric to an Euclidean space, or definitely $\epsilon(n)$-away from any Euclidean space in the pointed Gromov-Hausdorff distance.
\end{lemma}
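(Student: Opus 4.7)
The plan is to use the slice theorem to reduce the question to quotients of Euclidean spaces by linear actions of closed subgroups of $O(k)$, and then to combine a qualitative rigidity statement with a compactness argument in the Chabauty topology of closed subgroups.

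First, for any $x\in\mathbb R^n$ the isotropy $G_x$ is a compact subgroup of $O(n)$ (up to conjugation), and the slice theorem for proper isometric actions identifies the tangent cone of $\mathbb R^n/G$ at $[x]$ with the orbit space $N_x/G_x$, where $N_x=T_x(G\cdot x)^\perp\cong\mathbb R^k$ for some $k\le n$ and $G_x\subset O(k)$ acts linearly via its slice representation. It therefore suffices to prove the lemma in the special case of a closed linear isometric action: there exists $\epsilon(n)>0$ such that for every closed subgroup $H\subset O(k)$ with $k\le n$, either $\mathbb R^k/H$ is isometric to some $\mathbb R^l$, or its pointed unit ball at $[0]$ has GH distance at least $\epsilon(n)$ from every Euclidean unit ball.

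The qualitative rigidity $\mathbb R^k/H\cong\mathbb R^l\Rightarrow H=\{e\}$ follows from the metric-cone structure: $\mathbb R^k/H$ is a cone with vertex $[0]$ over the link $\Sigma=S^{k-1}/H$, so an isometry with $\mathbb R^l$ would force $\Sigma$ to be isometric to the round sphere $S^{l-1}$. A Hausdorff-measure comparison then excludes non-trivial $H$: for finite $H$ one has $\mathcal H^{k-1}(\Sigma)=\mathcal H^{k-1}(S^{k-1})/|H|<\mathcal H^{k-1}(S^{k-1})$, and for positive-dimensional $H$ the dimension of $\Sigma$ strictly drops below $k-1$. In either case $\Sigma\not\cong S^{l-1}$ unless $H=\{e\}$ (and $l=k$).

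For the quantitative statement I would argue by contradiction. Suppose there exist non-trivial closed subgroups $H_i\subset O(k_i)$ with $k_i\le n$ and integers $l_i$ such that $d_{GH}(B_1(\mathbb R^{k_i}/H_i),B_1(\mathbb R^{l_i}))\to 0$. Passing to subsequences, $k_i=k$ and $l_i=l$ become constant; the homothety invariance of metric cones upgrades this to pointed global convergence $\mathbb R^k/H_i\to\mathbb R^l$. Since the space of closed subgroups of the compact Lie group $O(k)$ is sequentially compact in the Chabauty topology, along a further subsequence $H_i\to H_\infty$; by the equivariant Gromov-Hausdorff convergence of isometric actions (Fukaya), $\mathbb R^k/H_i\to\mathbb R^k/H_\infty$, and uniqueness of limits together with the rigidity above forces $H_\infty=\{e\}$. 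The main obstacle is to derive a contradiction from the Chabauty convergence $H_i\to\{e\}$ together with $H_i\ne\{e\}$. For this I would pick $h_i\in H_i\setminus\{e\}$; compactness of $O(k)$ and the Chabauty condition imply any accumulation point of $\{h_i\}$ lies in $H_\infty=\{e\}$, so $h_i\to e$. On some $h_i$-invariant $2$-plane, $h_i$ acts as a rotation by a non-zero angle $\theta_i\to 0$; setting $n_i$ to be the nearest integer to $\pi/\theta_i$, the power $h_i^{n_i}\in H_i$ stays at distance bounded below from the identity, and any subsequential limit of $h_i^{n_i}$ provides a non-identity element of $H_\infty$, contradicting triviality.
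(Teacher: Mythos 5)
Your overall strategy (slice-theorem reduction to effective linear actions $H\subset O(k)$, $k\le n$, followed by a contradiction argument via Chabauty compactness of closed subgroups, Fukaya's equivariant convergence, and the no-small-subgroups trick with the powers $h_i^{n_i}$) is viable and genuinely different from the paper's proof. The paper, after the same slice reduction, argues directly and quantitatively: if the isotropy image is discrete the link $\mathbb S^{m-1}/G_o$ loses at least half of its volume, and if its identity component acts non-trivially then some orbit contains a great circle, so the link has radius at most $\frac{\pi}{2}$; either defect forces a definite Gromov--Hausdorff gap to Euclidean space, with no compactness needed and (in principle) explicit constants, whereas your route produces a non-explicit $\epsilon(n)$.

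There is, however, a genuine gap in your qualitative rigidity step, on which the conclusion $H_\infty=\{e\}$ of your compactness argument crucially depends. You claim $\mathbb R^k/H\cong\mathbb R^l\Rightarrow H=\{e\}$ and justify it by a Hausdorff-measure deficit of $\Sigma=S^{k-1}/H$ when $H$ is finite, and by ``$\dim\Sigma<k-1$'' when $H$ is positive-dimensional. The second observation only excludes $l=k$; it does not exclude $\Sigma$ being isometric to a \emph{unit} round sphere of lower dimension, i.e.\ $\mathbb R^k/H\cong\mathbb R^l$ with $l<k$, which is exactly what must be ruled out (if this occurred, ``uniqueness of limits plus rigidity'' would not give $H_\infty=\{e\}$, and ``$H_i$ non-trivial'' would not even be the correct negation of the dichotomy). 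Quotients of round spheres by positive-dimensional groups can indeed be round spheres --- the Hopf action gives $S^3/S^1\cong S^2(1/2)$ --- so any correct argument must use the radius, which your measure/dimension reasoning never does. The statement is true, but it needs an extra argument: for instance, if $S^{k-1}/H\cong S^{l-1}(1)$ with $l\ge1$, the quotient has diameter $\pi$, which forces a pair of antipodal points of $S^{k-1}$ to be $H$-fixed (an orbit at quotient distance $\pi$ from another orbit must be a single fixed point), so the quotient is the spherical suspension of $S^{k-2}/H$; iterating, either connectedness gives a contradiction or $H$ fixes an orthonormal basis and is trivial. This fixed-point/radius phenomenon is essentially what the paper exploits directly in its positive-dimensional case via the great-circle orbit and the bound $\op{rad}(\Sigma)\le\frac{\pi}{2}$. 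With that step repaired, your proof goes through.
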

\begin{proof}
	Let $G(o)$ be the orbit of $G$ at the origin $o$ of $\mathbb R^n$, and $G_o\subset O(n)$ the isotropy group at $o$. Let $T_o^\perp=\mathbb R^m$ be the normal space of $G(o)$ at $o$, and $\bar o$ the quotient point of $o$ in $\mathbb R^n/G$. By the standard theory of isometric group actions (e.g. see \cite[Proposition 1.8]{Grove2000}), the tangent cone $T_{\bar o}$ at $\bar o\in \mathbb R^n/G$ is isometric to $T_o^\perp/G_o$. Hence it is an Euclidean space if and only if the action of $G_o$ on $T_o^\perp$ is trivial.
	
	In the following we assume $G_o$ acts on $T_o^\perp$ non-trivially.
	
	Case 1. the action of $G_o$ on $T^\perp_o$ is discrete. Then $T_o^\perp/G_o$ is an Euclidean cone $C(\Sigma)$ over $\Sigma=\mathbb S^{m-1}/G_o$, whose volume is no more than half of $\mathbb S^{m-1}$. Hence $T_{\bar o}$ is definitely away from $\mathbb R^m$.
	
	Case 2. the identity component of $G_o$ acts on $T^\perp_{o}$ non-trivially. Since the orbit of $G_o$ on $T^\perp_{o}$ must contain a great circle, the radius of $\mathbb S^{m-1}/G_o$ is no more than $\frac{\pi}{2}$. Hence $T_{\bar o}$ is also definitely away from any Euclidean space.
\end{proof}

\begin{lemma}\label{lem-regular-space}
	There is $\delta_1(n)>0$ such that for any $\delta\le \delta_1(n)$,
	if $X$ satisfies the $(\delta,\rho)$-Reifenberg condition, then it is regular.
\end{lemma}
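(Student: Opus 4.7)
\medskip
\noindent\textbf{Proof plan.} My strategy is to combine the local description of $X$ via the quotient $Y/G$ from diagram \eqref{graph-local-cover} with Lemma~\ref{lem-Euclidean-quotient}, using the $(\delta,\rho)$-Reifenberg hypothesis as a gap-type assumption. Fix $x\in X$ and pull back to the equivariant picture \eqref{graph-local-cover}, so that a neighbourhood of $x$ is isometric to a neighbourhood of $\bar x=\pi_\infty(\hat x)$ in $Y/G$, where $Y$ is a $C^{1,\alpha}$-Riemannian $n$-manifold (Anderson's theorem applied to $\widehat{B}(x_i,\rho/2,\rho)$) and $G$ is a closed subgroup of $\op{Isom}(Y)$. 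The tangent cone $T_{\hat x}Y$ is a genuine Euclidean space $\mathbb R^n$ (since $Y$ is $C^{1,\alpha}$-Riemannian), and the limit group $G$ is a Lie group by the generalized Margulis lemma \cite{KW}, so one has a well-defined isotropy representation and orbit $G(\hat x)$ at $\hat x$.

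The first step is to identify any tangent cone of $X$ at $x$ with a quotient of Euclidean space by a closed subgroup of $O(k)$. By a standard blow-up/slice argument (passing to the normal slice $T_{\hat x}^\perp G(\hat x)\cong \mathbb R^k$, with $k=n-\dim G(\hat x)$, and taking equivariant rescaled limits of $G$ near $\hat x$), every tangent cone $T_xX$ is isometric to $\mathbb R^k/H$ for some closed subgroup $H\subset O(k)$ (the tangent isotropy at $\hat x$). Lemma~\ref{lem-Euclidean-quotient} then offers a dichotomy: either $T_xX$ is Euclidean, or it is $\epsilon(n)$-far from every Euclidean space in pointed Gromov--Hausdorff distance.

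The second step is to use the $(\delta,\rho)$-Reifenberg condition to exclude the far case. Indeed, rescaling $B_r(x,X)$ to $B_1$ and taking $r\to 0$, the hypothesis $d_{GH}(B_r(x),B_r^m(0))\le \delta r$ implies that any tangent cone $T_xX$ is $\delta$-close to $B_1^m(0)$ at unit scale, hence (by conicality) $\delta$-close to $\mathbb R^m$ globally in pointed GH distance. Choosing $\delta_1(n)=\tfrac12\epsilon(n)$, the dichotomy forces $T_xX$ to be Euclidean; since it is also $\delta$-close to $\mathbb R^m$ and Euclidean spaces of different dimensions are uniformly separated in pointed GH distance, we conclude $T_xX=\mathbb R^m$ at every $x\in X$, which is the required regularity.

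The main subtlety is the identification in step one: since $Y$ is only $C^{1,\alpha}$ there is no smooth exponential map, so one cannot simply invoke the slice theorem verbatim. I would handle this by rescaling the whole diagram \eqref{graph-local-cover} at $\hat x$ by $r_i^{-1}\to\infty$ and passing to a further equivariant GH limit, using that the rescaled metrics on $\widehat B$ converge in $C^{1,\alpha}$ to the Euclidean metric on $T_{\hat x}Y=\mathbb R^n$ and that the rescaled groups converge to the tangent cone of $G$ at $\hat x$, which is a closed subgroup of $\op{Isom}(\mathbb R^n)$. This realises $T_xX$ as $\mathbb R^n/G_\infty$, at which point Lemma~\ref{lem-Euclidean-quotient} applies directly and the remainder of the argument above yields $\delta_1(n)$.
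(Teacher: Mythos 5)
Your proposal is correct and follows essentially the same route as the paper: it uses the equivariant diagram \eqref{graph-local-cover} and a blow-up at $\hat x$ (where the $C^{1,\alpha}$-regularity of $Y$ from Anderson's theorem gives $T_{\hat x}Y=\mathbb R^n$ and the rescaled limit group is a closed subgroup of $\op{Isom}(\mathbb R^n)$) to realize any tangent cone $T_xX$ as a quotient of $\mathbb R^n$ by an isometric action, and then combines the $(\delta,\rho)$-Reifenberg closeness with the gap of Lemma \ref{lem-Euclidean-quotient} to force $T_xX\cong\mathbb R^m$ for $\delta\le\delta_1(n)$. The added detail about circumventing the missing smooth slice theorem by an equivariant rescaled limit is precisely the step the paper leaves implicit in its two-line argument, so the two proofs coincide in substance.
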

\begin{proof}
	By definition \eqref{def-Reifenberg-*}, the unit ball in any tangent cone $T_x$ of $X$ is $\delta$-close to the unit Euclidean ball. At the same time, by \eqref{graph-local-cover} $T_x$ is a quotient space of $\mathbb R^n$ by an isometric group action. Lemma \ref{lem-regular-space} follows from Lemma \ref{lem-Euclidean-quotient} immediately.
\end{proof}

Next, we prove the key lemma in this section. Let us fix the pointed Gromov-Hausdorff approximation $\hat \alpha_i:(\widehat{B}(x_i,\frac{\rho}{2},\rho), \hat{x}_i, \Gamma_i) \to (Y, \hat x, G)$ in \eqref{graph-local-cover}.

\begin{lemma}\label{local-regularity-limit}
	For any $\delta<\delta_1(n)$, the action of $G$ on $Y$ in \eqref{graph-local-cover} is smooth and proper,
	such that the open ball $B_{\frac{\rho}{2}}(x,Y/G)$ is a $C^{1,\alpha}$-Riemannian manifold.
\end{lemma}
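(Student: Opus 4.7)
The strategy is to use the Ricci flow smoothing to bring the problem into the realm of smooth Riemannian manifolds with bounded sectional curvature, apply the standard theory of isometric Lie group actions there, and then transfer the conclusions back to the $C^{1,\alpha}$-metric $\hat h$.

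First, I would apply Theorem~\ref{thm-smoothing-ricci-flow} to each $(M_i,g_i)$ to obtain Ricci-flowed metrics $g_i(t)$ for $t\in(0,T(n,\rho)]$ whose sectional curvature is bounded by $C(n,\rho)t^{-1/2}$ and whose covariant derivatives of all orders are uniformly bounded at each fixed $t$. Lifting to the normal cover yields smooth metrics $\hat g_i(t)$ on $\widehat B(x_i,\rho/2,\rho)$, on which the deck transformation group $\Gamma_i$ continues to act by isometries (Ricci flow commutes with isometries). For a fixed $t>0$, Cheeger--Gromov's convergence theorem produces, after passing to a subsequence, an equivariant pointed $C^{1,\alpha}$-convergence $(\widehat B(x_i,\rho/2,\rho),\hat g_i(t),\hat x_i,\Gamma_i)\to (Y_t,\hat h_t,\hat x_t,G_t)$, where $Y_t$ is a smooth Riemannian manifold and, by the Myers--Steenrod theorem, $G_t$ is a Lie subgroup of $\operatorname{Isom}(Y_t,\hat h_t)$ acting smoothly, properly, and isometrically.

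Second, I would identify this smooth data with the original limit in \eqref{graph-local-cover}. The diffeomorphisms produced by Anderson's theorem~\ref{thm-An-convergence}, applied simultaneously to $\hat g_i$ and $\hat g_i(t)$, exhibit $Y_t$ and $Y$ as the same underlying smooth manifold, and the estimate $|g_i(t)-g_i|_{g_i}\le 4t$ in \eqref{ineq-smoothing-sec} yields $\hat h_t\to \hat h$ in the $C^0$-sense as $t\to 0$ along a diagonal subsequence. Since $G_t$ and $G$ both arise as equivariant Gromov--Hausdorff limits of the same groups $\Gamma_i$ with respect to uniformly close metrics, they are canonically identified as topological transformation groups on the common underlying space, and the Lie group structure and the smooth proper action for $\hat h_t$ is inherited by $G$ on $(Y,\hat h)$. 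Properness on $Y$ follows from the properly discontinuous $\Gamma_i$-actions together with the equivariance of the limit.

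Third, for the quotient structure on $Y/G$, I would invoke the slice theorem for smooth proper isometric actions (cf.~\cite{Grove2000}), which identifies a neighbourhood of $\bar y=\pi_\infty(\hat y)$ in $Y/G$ with $T_{\hat y}^\perp G(\hat y)/G_{\hat y}$, where the isotropy $G_{\hat y}$ acts linearly via its slice representation. Lemma~\ref{lem-regular-space} guarantees that every tangent cone of $Y/G$, which is an open subset of $X$, is Euclidean; Lemma~\ref{lem-Euclidean-quotient} applied to the finite-dimensional representation $G_{\hat y}\to O(T_{\hat y}^\perp G(\hat y))$ then forces the slice action to be trivial. Hence $B_{\rho/2}(x,Y/G)$ is smoothly modelled on the horizontal normal spaces and is a smooth manifold, and the quotient metric, which is the descent of the horizontal part of $\hat h$ under a smooth submersion, inherits $C^{1,\alpha}$-regularity.

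The main obstacle is ensuring the slice theorem can be applied in Step~3 despite $\hat h$ being only $C^{1,\alpha}$. I would circumvent this by running the slice theorem first for the smooth metric $\hat h_t$ with $t>0$, noting that the $G$-orbits, isotropy groups, and slice representations depend only on the fixed smooth manifold structure on $Y$ and the smooth $G$-action secured in Step~2, and are therefore independent of $t$. The smooth quotient structure established at $t>0$ then persists as $t\to 0$, while the quotient metric converges in $C^{1,\alpha}$ because horizontal lifting is a continuous operation in the metric.
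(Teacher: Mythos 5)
Your proposal follows the same overall strategy as the paper: smooth via the Ricci flow, use Cheeger--Gromov/Anderson convergence to obtain a smooth equivariant limit $(Y_t,\hat h(t),G_t)$, identify the group $G_t$ with $G$ by noting that the deck transformations $\Gamma_i$ are metric-independent, and then invoke the smooth slice theorem to produce the quotient manifold structure that is finally transported back to the $C^{1,\alpha}$-metric. Steps 1 and 2 are essentially the paper's argument.

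There is, however, a genuine gap in Step 3, precisely at the point where you conclude that the slice action of $G_{\hat y}$ on $T_{\hat y}^\perp G(\hat y)$ is trivial. You invoke Lemma~\ref{lem-regular-space} to say that the tangent cones of $Y/G$ (equivalently, of $X$, with the \emph{original} $C^{1,\alpha}$ quotient metric) are Euclidean, and then apply Lemma~\ref{lem-Euclidean-quotient} to ``the finite-dimensional representation $G_{\hat y}\to O(T_{\hat y}^\perp G(\hat y))$.'' But the object that the slice theorem identifies with that slice quotient is the tangent cone of $Y_t/G_t$ in the \emph{smoothed} metric $\hat h(t)$, $t>0$. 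Since $\hat h$ is only $C^{1,\alpha}$ and the exponential map may not exist, there is no a priori identification of the $t=0$ tangent cone of $Y/G$ with $T_{\hat y}^\perp G(\hat y)/G_{\hat y}$; your remark that the slice data is ``independent of $t$'' establishes that the representation does not change, but not that the metric tangent cone at $t=0$ coincides with the slice quotient. The paper avoids this issue entirely: instead of invoking Lemma~\ref{lem-regular-space}, it uses the $e^{2t}$-bi-Lipschitz equivalence between $\hat h$ and $\hat h(t)$ (from $|g(t)-g|_g\le 4t$) to transfer the Reifenberg $\delta$-closeness of the $t=0$ tangent cone directly to an $(e^{2t}-1+\delta)$-closeness of the $t>0$ tangent cone, which \emph{is} the slice quotient. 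For $t$ small enough that $e^{2t}-1+\delta<\delta_1(n)$, Lemma~\ref{lem-Euclidean-quotient} then forces that slice quotient to be Euclidean, whence the slice action is trivial. That bi-Lipschitz transfer, together with the attendant smallness constraint on $t$, is what is missing from your argument; without it, Step~3 does not close.

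A secondary remark on Step 2: Anderson's theorem applied separately to $\hat g_i$ and to $\hat g_i(t)$ in principle produces two possibly different sequences of identifying diffeomorphisms, so ``the same underlying smooth manifold'' is not automatic. The paper sidesteps this by viewing $\hat g_i(t)$ as a second metric tensor on the fixed domain $\widehat U_i$ (already identified with a domain in $Y$ via the $\hat g_i$-diffeomorphisms) and using only the uniform $C^0$-closeness $|g_i(t)-g_i|\le 4t$ plus the regularity \eqref{ineq-smoothing-sec} to pass to a limit tensor $\hat h(t)$ on the \emph{same} manifold $Y$; this is worth spelling out, since the sameness of the $\Gamma_i$-action is what makes the identification $G_t=G$ canonical.
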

\begin{proof}
	Let us apply the Ricci flow on $(M_i,g_i)$. By Theorem \ref{thm-smoothing-ricci-flow}, the solution $g_i(t)$ of Ricci flow (\ref{ricci-flow-equation}) with initial condion $g_i(0)=g_i$
	exists for $0<t\leq T(n,\rho)$, a positive constant depending only on $n$ and $\rho$, such that \eqref{ineq-smoothing-sec} holds for $g_i(t)$.
	\par
	For any fixed $t\in(0,T(n,\rho)]$, let us view $\hat g_i(t)$ as a sequence of metrics on $\widehat{B}(x_i,\frac{\rho}{2},\rho)$ in \eqref{graph-local-cover}. Then by the Arzela-Ascoli Theorem under Gromov-Hausdorff convergence, the underlying distance functions $\hat d_{i,t}$ subconverges to a distance function $\hat d_{\infty,t}$, which is $e^{2t}$-bi-Lipschitz to the underlying distance of $\hat h$. By (\ref{ineq-smoothing-sec}) and Anderson's $C^{1,\alpha}$-convergence theorem, up to a subsequence $\hat g_i(t)$ uniformly converges to a smooth metric tensor $\hat h(t)$ on $(Y,\hat h)$ with the same regularity as $g_i(t)$ in \eqref{ineq-smoothing-sec}. Thus, we have
	\begin{equation}\label{graph-local-cover-smoothed}
	\begin{CD}
	(\widehat{U}_i, \hat d_{i,t}, \hat{x}_i, \Gamma_i) @>\hat \alpha_i>i\to \infty> (Y_t, \hat h(t), \hat x, G_t)\\
	@V\pi_iVV @V\pi_{t,\infty} VV\\
	(B_{\frac{\rho}{2}}(x_i,g_i), d_{i,t}, x_i) @>GH>i\to \infty> Y_t/G_t,
	\end{CD}
	\end{equation}
	where $\widehat{U}_i=\widehat{B}(x_i,\frac{\rho}{2},\rho)$, $Y_t$ is a smooth Riemannian manifold, and $G_t$ is the limit group action of $\Gamma_i$. (Furthermore, if $\hat \alpha_i$ is replaced by a local diffeomorphism $\psi_i$ that realizes the $C^{1,\alpha}$-convergence of $\hat g_i$ to $\hat h$, then it can be seen that $\psi_i^*\hat g_i(t)$ $C^\infty$-converges to $\hat h(t)$. We do not need this fact here.)
	
	The key point here is that the action of $\Gamma_i$ on $(\widehat{U}_i,\hat g_i(t))$ and $(\widehat{U}_i, \hat g_i)$ is the same as deck transformation of the normal cover $\pi_i:(\widehat{U}_i, \hat x_i) \to (B_{\frac{\rho}{2}}(x_i,g_i), x_i)$. And at the same time, $\hat g_i(t)$ and $\hat g_i$, and hence $\hat h(t)$ and $\hat h$, are two metric tensors on the same smooth manifolds $\widehat{U}_i$ and $Y$ respectively, which by \eqref{ineq-smoothing-sec} are $e^{2t}$-bi-Lipschitz equivalent.
	
	It follows from the Arzela-Ascoli Theorem that the limit action $G_t$ of $\Gamma_i$ under $g_i(t)$ is also the same as $G$ under $g_i$. Since the action of $G_t$ is smooth and proper on $(Y,\hat h(t))$, so is $G$ on $(Y,\hat h)$. Thus, we derive that $\pi_{t,\infty}$ coincides with $\pi_\infty$, and  $Y/G$ can be identified to $Y_t/G_t$ as topological spaces.
	
	In order to show that $B_{\frac{\rho}{2}}(x,Y/G)$ is a $C^{1,\alpha}$-Riemannian manifold, we first prove that $Y_t/G_t$ is a smooth Riemannian manifold.
		
	Indeed, the tangent cone of $Y/G$ is the same as that in $X$, which by the $(\delta,\rho)$-Reifenberg condition, is $\delta$-close to be Euclidean on the unit ball at the vertex point. By the $e^{2t}$-bi-Lipschitz equivalence between $Y_t/G_t$ and $Y/G$, the tangent cone of $Y_t/G_t$ is $(e^{2t}-1+\delta)$-close to be Euclidean on the unit ball. By Lemma \ref{lem-Euclidean-quotient}, for  $e^{2t}-1+\delta<\delta_1(n)$, $Y_t/G_t$ is regular.
	
	Then by the standard theory of isometric actions on Riemannian manifolds (e.g., \cite{Alexandrino2015}), the slice representation of the isotropy group $G_p$ at any interior point $p\in Y_t$ is trivial, hence the orbit $G_t(p)$ is principle. By the principal orbit theorem (e.g. see \cite[\S 1]{Grove2000}), it follows that $Y_t/G_t$ is a smooth Riemannian manifold, and $\pi_{t,\infty}$ is a smooth Riemannian submersion.

	Let us endow $B_{\frac{\rho}{2}}(x, Y/G)$ with the smooth structure on $Y_t/G_t$. Because $\pi_\infty:(Y,\hat h)\to  B_{\frac{\rho}{2}}(x_\infty,Y/G)$ coincides with $\pi_{t,\infty}$, $\pi_\infty$ is also a submersion. By the fact that $G$ acts on $Y$ isometrically, the Riemannian metric tensor $\hat h$ on $Y$ induces a quotient Riemannian metric $h$ on $Y_t/G_t$.
	
	Furthermore, by the implicit function theorem, an adapted coordinate chart can always be constructed around preimages of a point in $B_{\frac{\rho}{2}}(x,Y/G)$. By the proof of Lemma \ref{almost-C1-harmonic-coordinate-chart-on-X}, the adapted coordinate charts on $Y$ descend to $C^\infty$-admissible local coordinate charts on $B_{\frac{\rho}{2}}(x,Y/G)$, where the quotient metric tensor $h$ is $C^{1,\alpha}$.
\end{proof}

\begin{proof}[Proof of Proposition \ref{prop-C1alpha-regularity}]
	~
	\par
	Let $\delta_1(n)$ be that in Lemma \ref{lem-regular-space} and $\delta=\frac{1}{2}\delta_1(n)$. Then \eqref{prop-C1alpha-regularity-a} has been proved in Lemma \ref{local-regularity-limit}. In the following we prove \eqref{prop-C1alpha-regularity-b}. That is, $X$ admits a $C^\infty$-smooth differentiable structure, such that the metric tensor $h$ induced locally from $\pi_\infty$ in \eqref{graph-local-cover} is $C^{1,\alpha}$.
	
	It suffices to show the local charts induced from $\pi_{\infty}$ in \eqref{graph-local-cover} are $C^\infty$-admissible with each other, where the metric tensors coincide with each other by pulling back.
	
	Indeed, let $B_{\frac{\rho}{2}}(x_{1,i},g_i)$ and $B_{\frac{\rho}{2}}(x_{2,i},g_i)$ be two open balls, whose intersection $W_i(\frac{\rho}{2})=B_{\frac{\rho}{2}}(x_{1,i},g_i)\cap B_{\frac{\rho}{2}}(x_{2,i},g_i)$ is non-empty. Then the identity map $\imath_{j,i}$ from $(W_i(\frac{\rho}{2}), d_{W_i(\frac{\rho}{2})})$ with its length metric to $(W_i(\frac{\rho}{2}), d_{B_{\frac{\rho}{2}}(x_{j,i},g_i)})$ with the restricted metric is $1$-Lipschitz and locally isometric. By the precompactness Theorem \ref{thm-precompactness-b}, the normal cover $\pi_{i,W}:\widehat{W}_i(\frac{\rho}{2},\rho)\to (W_i(\frac{\rho}{2}), d_{W_i(\frac{\rho}{2})})$  sub-converges to $\pi_{\infty,W}:\widehat{W}_\infty (\frac{\rho}{2},\rho)\to W_\infty(\frac{\rho}{2})$ as $i\to \infty$.
	And the identity map $\imath_{j,i}$ sub-converges to $\imath_{j,\infty}:W_\infty(\frac{\rho}{2})\to W_{j,\infty}(\frac{\rho}{2})\subset Y_j/G_j=\lim_{i\to \infty}B_{\frac{\rho}{2}}(x_{j,i},g_i)$, which is also locally isometric.

	Furthermore, by \eqref{intersection-normal-cover},
	$\pi_{j,i,W}:\widehat{W}_i(\frac{\rho}{2},\rho)\to \pi_{j,i}^{-1}(W_i(\frac{\rho}{2}))\subset \widehat{B}(x_{j,i},\frac{\rho}{2},\rho)$ is a normal cover for $j=1,2$. Let $\pi_{j,\infty}:Y_j\to Y_j/G_j$ be the limit submersion of $\pi_{j,i}:\widehat{B}(x_i,\frac{\rho}{2},\rho)\to B_{\frac{\rho}{2}}(x_i,g_i)$ as in \eqref{graph-local-cover}. Then we derive the following commutative diagram
	$$\xymatrix{
		& & \widehat{W}_\infty(\frac{\rho}{2},\rho) \ar[dl]_{\pi_{1,\infty,W}} \ar[dr]^{\pi_{2,\infty,W}} \ar[dd]_{\pi_{\infty,W}} &\\
		&\pi_{1,\infty}^{-1}(W_{1,\infty} (\frac{\rho}{2})) \ar[d]_{\pi_{1,\infty}} & & \pi_{2,\infty}^{-1}(W_{2,\infty}(\frac{\rho}{2})) \ar[d]_{\pi_{2,\infty}} \\
		&W_{1,\infty}(\frac{\rho}{2})  & W_\infty(\frac{\rho}{2})\ar[r]_{\imath_{2,\infty}}\ar[l]^{\imath_{1,\infty}} &W_{2,\infty}(\frac{\rho}{2})}$$
	where the maps $\imath_{j,\infty}$ are locally isometric.
	
	By the proof of Lemma \ref{local-regularity-limit}, $W_{\infty}(\frac{\rho}{2})$ is also a smooth manifold and $\pi_{\infty,W}$ is a smooth submersion. By the commutative diagram above, local coordinate charts on $W_{j,\infty}(\frac{\rho}{2})$ from $Y_j/G_j$ coincide with those on $W_\infty(\frac{\rho}{2})$  descending from $\widehat{W}_{\infty}(\frac{\rho}{2},\rho)$. Hence they are $C^\infty$-admissible with each other. Moreover, the metric tensors on $W_{j,\infty}(\frac{\rho}{2})$ are also the same up to the diffeomorphism $\imath_{j,\infty}$.
\end{proof}

\begin{remark}\label{rem-smoothed-time}
	Let $(M_i,g_i)\overset{GH}{\longrightarrow} X\in \mathcal{X}_{n,r,v}^m(\delta(n),\rho)$, where $(M_i,g_i)$ are Riemannian $n$-manifolds with $(r,v)$-local covering geometry, and $\delta=\delta(n)$ is the minimum of that in \eqref{def-Reifenberg-*} and in Proposition \ref{prop-C1alpha-regularity}.
	By Theorem \ref{thm-smoothing-ricci-flow} (Dai-Wei-Ye \cite{DWY1996}, cf. \cite{CRX2017}), the Ricci flow solution $g_i(t)$ with initial metric $g_{i}(0)=g_i$ admits a uniform positive existence time $T(n,r,v,\rho)$, and the higher-ordered regularities \eqref{ineq-smoothing-sec}.
	Assume that $(M_i,g_i(t))$ sub-converges to $X_t$.
	We point out that,
	by the proof of Lemma \ref{local-regularity-limit}, for not only $t$ satisfying $e^{2t}-1+\delta(n)<\delta_1(n)$, but also all $t\in (0,T(n,r,v,\rho)]$, $X_t$ is regular.
	Hence, by the proof of Proposition \ref{prop-C1alpha-regularity}, $X_t$ is a smooth Riemannian manifold for any $t\in (0,T(n,r,v,\rho)]$.
	
	Indeed, this can be seen from the fact that $Y_t$ shares the same isotropy group as $Y_{t_0}$ for $e^{2t_0}-1+\delta(n)<\delta_1(n)$, or an open and closed argument on $t_0=\sup\{t:\text{$X_t$ is regular}\}$, due to that $e^{2t}-1=\frac{\delta_1(n)}{2}$ implies that the tangent cone of $(X,h(t))$ is isometric to $\mathbb R^m$, and hence $t_0$ is extended further to cover $2t$.

\end{remark}
\section{Adapted harmonic coordinates on the local covers of balls}
Let the assumptions be as in the beginning of Section 4.
In this section we construct adapted harmonic coordinates on $\widehat{B}(x_i,\frac{\rho}{2},\rho)$ and on its limit space $(Y, \hat x)$ in the graph \eqref{graph-local-cover} respectively. Let $\hat h$ be the Riemannian metric tensor on $Y$ and $h$ its quotient metric tensor on $X$.
\begin{proposition}\label{prop-adapted-harmonic-coordinates}
There are constants $\delta(n)>0$ and $r_1(n, r,v, \rho,\alpha,Q)>0$ such that for $0<\delta\leq \delta(n)$ and $R\le \min\{\delta^{-1/2}r_1, \delta^{-1/4}\rho\}$,
there exists an adapted harmonic coordinate chart $(\hat{f}^{1},\dots,\hat{f}^{m},\hat f^{m+1},\dots,\hat f^n)$ defined on $B_{R}(\hat{x}_\infty,\hat{h}_{\delta}=\delta^{-1} \hat h)$ such that $\hat{f}^{k}$ ($k=1,\dots,m$) descends to a smooth function $f^{k}$ in $B_{R}(x_\infty,h_{\delta})$, and admits the following regularities:
\begin{equation}\label{C1-norm-control-for-metric-tilde-h}
\begin{cases}
e^{-Q}\delta_{kl}\leq \hat{h}_{\delta,kl}=\hat{h}_{\delta}(\frac{\partial}{\partial \hat{f}^{k}},\frac{\partial}{\partial \hat{f}^{l}})\leq e^{Q}\delta_{kl};\\
R^{1+\alpha}\left\|\frac{\partial \hat{h}_{\delta,kl}}{\partial \hat{f}^{j}}\right\|_{C^{0,\alpha}(B_{R}(\hat x,\hat h_\delta))}\leq \Psi(\delta\mid n, r,v, \rho,\alpha,Q)
\end{cases}
\end{equation}
and
\begin{equation}\label{Hessian-control-for-yj}
\|\operatorname{Hess}\hat{f}^j\|_{C^{0,\alpha}(B_{R}(\hat x, \hat h_\delta))}\leq \Psi(\delta\mid n, r,v, \rho,\alpha,Q), \ \ j=1,\dots,n,
\end{equation}
where the $C^{0,\alpha}$-norm is taken in the coordinates $\{\hat{f}^{j}\}$.

\end{proposition}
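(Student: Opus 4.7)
The plan is to construct the $\hat f^{k}$'s downstairs by Cheeger-Colding's $\delta$-splitting technique on $(M_i,g_i)$, lift them to harmonic functions on the local covers $\widehat{B}(x_i,\tfrac{\rho}{2},\rho)$, and then complete the lifted family to a full harmonic coordinate chart by adding $n-m$ more harmonic functions obtained from a second application of the $\delta$-splitting construction on the cover (whose basepoint is $(\delta,r')$-Reifenberg by Remark \ref{rem-Reifenberg-normal-cover} and Lemma \ref{lem-rewinding-reifenberg}). Throughout I would work at scale $\delta^{-1}$, so that balls of radius $R\le \delta^{-1/4}\rho$ in $(M_i,\delta^{-1}g_i)$ are $\delta^{3/4}$-Gromov-Hausdorff close to the Euclidean $m$-ball, and use the bound $|\Ric|\le (n-1)\delta$ in the rescaled metric to apply Cheeger-Colding's transformation theorem.

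First, since $X$ is $(\delta,\rho)$-Reifenberg at $x$, the ball $B_{R}(x_i,\delta^{-1}g_i)$ is $\Psi(\delta)$-Gromov-Hausdorff close to $B_R^m(0)$ for large $i$. By Cheeger-Colding \cite{CC1997I}, one obtains a harmonic $(m,\Psi(\delta))$-splitting map $\varphi_i=(\varphi_i^1,\dots,\varphi_i^m):B_R(x_i,\delta^{-1}g_i)\to\mathbb R^m$ with small integral Hessian and $C^0$ closeness to the Gromov-Hausdorff projection. Pulling back by $\pi_i$ in \eqref{graph-local-cover} gives harmonic functions $\hat\varphi_i^k=\varphi_i^k\circ\pi_i$ on $\widehat{B}(x_i,\tfrac{\rho}{2},\rho)$ that are automatically constant along $\pi_i$-fibers, whose integral Hessian bounds lift because $\pi_i$ is a local isometry. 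Next, since the cover has a uniform $C^{1,\alpha}$-harmonic radius lower bound (by Remark \ref{rem-Reifenberg-normal-cover} and Anderson's theorem \ref{thm-An-convergence}), I would complete $\hat\varphi_i$ to a full harmonic chart by solving a Dirichlet problem on $B_R(\hat x_i)$ with boundary values equal to $n-m$ functions that are almost orthogonal to $\hat\varphi_i$ at $\hat x_i$; alternatively, apply Cheeger-Colding once more on the cover (which is now $(\delta,r')$-Reifenberg at $\hat x_i$) to get an $(n,\Psi(\delta))$-splitting map $(\hat\psi_i^1,\dots,\hat\psi_i^n)$ and perform a Gram-Schmidt adjustment against $\hat\varphi_i^1,\dots,\hat\varphi_i^m$ to produce harmonic $\hat\psi_i^{m+1},\dots,\hat\psi_i^n$ with small integral Hessian.

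To upgrade the integral Hessian bounds to the pointwise $C^{0,\alpha}$ bound required by \eqref{Hessian-control-for-yj}, I would use that in the uniform $C^{1,\alpha}$-harmonic chart on the cover, each $\hat f^j$ satisfies the linear elliptic equation $\hat g^{ab}\partial_a\partial_b\hat f^j=-\hat g^{ab}\Gamma_{ab}^c\partial_c\hat f^j$, so the Schauder $L^p$/$C^{2,\alpha}$ estimates, combined with the $L^2$-Hessian smallness propagated by Cheeger-Colding's $\epsilon$-regularity in the almost-splitting setting, give $\|\op{Hess}\hat f^j\|_{C^{0,\alpha}}\le \Psi(\delta\mid n,r,v,\rho,\alpha,Q)$. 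Finally, passing to the limit as $i\to\infty$ via Anderson's $C^{1,\alpha}$-compactness on $\widehat{B}(x_i,\tfrac{\rho}{2},\rho)$, the chart $(\hat\varphi_i,\hat\psi_i)$ converges in $C^{1,\alpha}$ (and hence in $C^2$ after Schauder) to $(\hat f^1,\dots,\hat f^n)$ on $(Y,\hat h_\delta)$, whose first $m$ components are $G$-invariant by the equivariant convergence in \eqref{graph-local-cover} and the fact that each $\hat\varphi_i^k$ is $\Gamma_i$-invariant. The regularities \eqref{C1-norm-control-for-metric-tilde-h}, \eqref{Hessian-control-for-yj} are preserved under limits. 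By Proposition \ref{prop-C1alpha-regularity}, the $G$-invariant $\hat f^k$ then descend to smooth functions $f^k$ on $B_R(x_\infty,h_\delta)$.

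The main obstacle I anticipate is the quantitative step in the third paragraph: bootstrapping the $L^2$-Hessian bound provided by Cheeger-Colding to a $C^{0,\alpha}$ bound whose modulus depends only on $n,r,v,\rho,\alpha,Q$ and tends to zero with $\delta$. This requires combining the uniform $(\alpha,Q)$-harmonic-radius estimate on the covers with a Moser iteration / Schauder bootstrap applied to $\op{Hess}\hat f^j$, treating the Ricci-term $-(\op{Ric})_{ab}\partial^a\hat f^j\,\partial^b\hat f^j$ from Bochner's formula as a bounded source; in particular one uses that on the $C^{1,\alpha}$-harmonic chart the Christoffel symbols and $\op{Ric}$ are uniformly $L^\infty$-bounded, so the Bochner inequality for $|\nabla\hat f^j|^2$ yields a subsolution of a uniform elliptic operator with small $L^1$-mean. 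A careful tracking of the dependence of $\Psi$ on all parameters has to be carried out, and the Gram-Schmidt completion must preserve these estimates with the stated dependence.
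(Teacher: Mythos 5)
Your construction of the first $m$ functions (Cheeger--Colding $\delta$-splitting maps on $(M_i,g_i)$, lifted through $\pi_i$ so that they are fiberwise constant, then passed to the limit and descended by $G$-invariance) matches the paper. The genuine gap is in your third paragraph, and you correctly sense it: the passage from Cheeger--Colding's \emph{integral} Hessian smallness to the pointwise bound \eqref{Hessian-control-for-yj} is not achieved by the tools you invoke. Schauder estimates for the harmonic function $\hat f^j$ in the uniform $C^{1,\alpha}$-chart only give $\|\operatorname{Hess}\hat f^j\|_{C^{0,\alpha}}\le C(n,\dots)$, i.e.\ boundedness, not smallness of order $\Psi(\delta)$; and the Bochner/Moser route with the Ricci term as a bounded source yields at best pointwise control of $|\nabla\hat f^j|^2$, not a $C^{0,\alpha}$-small Hessian. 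Smallness of a H\"older norm cannot come from elliptic estimates applied to $\hat f^j$ alone --- one must compare $\hat f^j$ to a competitor whose Hessian is already known to be $C^{0,\alpha}$-small.

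This is exactly what the paper does, and it is the step your proposal is missing. Using Anderson's uniform $C^{1,\alpha}$-harmonic radius $r_1$ on the normal covers, the paper fixes a background harmonic chart $\varphi_i$ on $B_{r_1}(\hat x_i,\hat g_i)$ and blows up by $\delta^{-1}$ (Lemma \ref{lem-standard-blow-up}), so that $\varphi_{i,\delta}=\delta^{-1/2}\varphi_i$ is $C^{2,\alpha}$-close to a Cartesian system whose first $m$ coordinates project under $\pi_0$ to Cartesian coordinates on $\mathbb R^m$ \eqref{projected-cartesian-coordinates}. The lifted splitting function $\hat f_i^j$ is then shown to be $C^0$-close to $-\varphi_{i,\delta}^j$ (via the Buseman functions and the Gromov--Hausdorff approximation), and since $\varphi_{i,\delta}^j+\hat f_i^j$ is itself \emph{harmonic}, the Schauder interior estimate upgrades this $C^0$-closeness to $C^{2,\alpha}$-closeness \eqref{C2-close-to-hat-fj}; both \eqref{C1-norm-control-for-metric-tilde-h} and \eqref{Hessian-control-for-yj} then follow because the Hessian of the blown-up background coordinates is already $C^{0,\alpha}$-small. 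The same device also disposes of your chart-completion step: the last $n-m$ coordinates are simply taken to be $\varphi_{i,\delta}^{m+1},\dots,\varphi_{i,\delta}^n$ (Lemma \ref{lem-adapted-coordinates}), with non-degeneracy and the $(\alpha,Q)$-control automatic from the $C^{2,\alpha}$-closeness to a Cartesian system, whereas your second splitting map plus Gram--Schmidt would again require exactly the pointwise gradient/Hessian control that is at issue. Unless you supply such a comparison argument (or an interpolation argument using a strictly better H\"older exponent, which you do not set up), the proposal does not close.
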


Assuming Proposition \ref{prop-adapted-harmonic-coordinates}, we first prove Theorem \ref{thm-bounded-local-covering-geometry}.

\begin{proof}[Proof of Theorem \ref{thm-bounded-local-covering-geometry}]
	~
	\par
	Let $\delta=\delta(n)$ be that in Proposition \ref{prop-adapted-harmonic-coordinates}, and $X\in \mathcal{X}_{n,r,v}^m(\delta,\rho)$ a compact Ricci-limit space of Riemannian $n$-manifolds $(M_i,g_i)$ with $|\Ric_{M_i}|\leq n-1$ and $(r,v)$-local covering geometry, such that any point $x\in X$ is $(\delta,\rho)$-Reifenberg.
	By Lemma \ref{lem-rewinding-reifenberg}, $(M_i,g_i)$ is of $(\delta,r')$-Reifenberg local covering geometry for $r'=r'(n,r,v,\rho)>0$.
	
	Let $r_1=r_1(n,r',\alpha,Q)$ be as in Proposition \ref{prop-adapted-harmonic-coordinates}, and let $r_0=\min\{r_1,\delta^{1/4}\rho\}$. By Proposition \ref{prop-C1alpha-regularity}, $X\in \mathcal X_{n,r,v}^m(\delta,\rho)$ is a $C^\infty$-smooth manifold with a $C^{1,\alpha}$-Riemannian metric tensor $h$. Proposition \ref{prop-adapted-harmonic-coordinates} together with Theorem \ref{tech-theorem-*} implies the harmonic radius of $(X,h)$ is no less than $r_0$. The first part of (\ref{thm-bounded-local-covering-geometry}.1) is complete.
		
	For (\ref{thm-bounded-local-covering-geometry}.2), let us assume that $(X,h)\in\mathcal X_{n,r,v}^m(\delta,\rho,D)$.
	Since $(X,h)$ is a Ricci-limit space, there are harmonic coordinate charts that covers $(X,h)$ whose number admits a uniform bound $N(n,r_0,D)$, as well as the multiplicity of their intersections. By a standard argument, e.g., \cite[Lemma 2.1]{Anderson1990}, or \cite[arguments below Theorem 0.2]{AnCh1992}, $X_{n,r,v}^m(\delta,\rho,D)$ is compact in the $C^{1,\alpha}$-topology.
	
	What remains is to show (\ref{thm-bounded-local-covering-geometry}.1.a-c).
	
	Let $(M_i,g_i)\overset{GH}{\longrightarrow} X$, where $(M_i,g_i)$ are Riemannian $n$-manifolds with $|\op{Ric}_{M_i}|\le n-1$ and $(\delta,r')$-Reifenberg local covering geometry.
	Let us consider the Ricci flow solution $g_i(t)$ with initial metric $g_{i}(0)=g_i$, which exists for $t\in (0,T(n,r')]$ by Theorem \ref{thm-smoothing-ricci-flow} (Dai-Wei-Ye \cite{DWY1996}, cf. \cite{CRX2017}) and satisfies
	the higher-ordered regularities \eqref{ineq-smoothing-sec}. By Remark \ref{rem-smoothed-time},  for all $t\in (0,T(n,r'))$ the limit space $X_t$ of $(M_i,g_i(t))$ is a smooth Riemannian manifold $(X_t,h(t))$, which is $e^{2t}$-bi-Lipschitz equivalent to $(X,h)$.
	
	By Remark \ref{rem-smoothed-time} again, there is $\rho_1(n)>0$ such that $(X_t,h(t))$ lies in $\mathcal X_{n,r,v}^m(\delta,\rho_1,D_1)$ for $D_1=e^{T(n,r')}D$. Hence it admits a uniform harmonic radius for all $t\in [0,T(n,r')]$. By the $C^{1,\alpha}$-precompactness, there is a diffeomorphism $\varphi_t$ from $X$ to $X_t$ such that the pullback metric $\varphi_t^*h(t)$ converges to $h$ in the $C^{1,\alpha}$-norm.
	
	Let $\hat g_i(t)=\pi_i^*g_i(t)$ be the pullback metric on the normal cover $\widehat{B}(x_i,\frac{r'}{2},r')$ of $B_{\frac{r'}{2}}(x_i,g_i)$ in the graph \eqref{graph-local-cover-smoothed}. Then the regularities \eqref{ineq-smoothing-sec} pass to the limit metric $\hat h(t)$ on $Y_t$. By the proof of Lemma \ref{almost-C1-harmonic-coordinate-chart-on-X}, the relations \eqref{pi-relation-metric-tensor}-\eqref{C1-estimate-using-Riemmanian-submersion} between the quotient metric $h(t)$ and $\hat h(t)$ imply that $h(t)$ satisfies the same regularities \eqref{ineq-smoothing-sec} up to a definite ratio depending on $n$ and $Q$.
\end{proof}

\subsection{Preparation}
Let us make some preparation for the proof of Proposition \ref{prop-adapted-harmonic-coordinates}.
\begin{lemma}\label{lem-standard-blow-up}
	Let $\varphi=(y^1,\dots,y^n):B_r(p)\subset (M,h)\to \Omega\subseteq \mathbb{R}^{n}$ be a harmonic coordinate chart at $p$ with $\varphi(p)=0$ such that
	\begin{equation}\label{c0-estimate-for-star-tilde-h-*}
	e^{-Q}\delta_{st}\leq h_{st}=h(\frac{\partial}{\partial y^{s}},\frac{\partial}{\partial y^{t}})\leq e^{Q}\delta_{st}
	\end{equation}
	and
	\begin{equation}\label{c1-estimate-for-star-tilde-h-*}
	r^{1+\alpha}\left\|\frac{\partial h_{st}}{\partial y^{j}} \right\|_{C^{0,\alpha}\left(\Omega\right)}\leq e^{Q},
	\end{equation}
	where the $C^{0,\alpha}$-norm is taken in the coordinates $\{y^{j}\}$.
	Then after blowing up $h$ by $\lambda^2$, where $\lambda\to \infty$, the harmonic coordinates $\varphi_\lambda=(\lambda y^j):B_{\lambda r}(p,h_\lambda)\to \lambda\Omega$ $C^{2,\alpha}$-converge to a Cartesian coordinate system $(x^{1},\dots,x^{n})$ in $\mathbb{R}^{n}$, and $h_{\lambda}=\lambda^2 h$ $C^{1,\alpha}$-converge to an Euclidean metric $g_{\mathbb{R}^{n}}$. That is, for $0<R\le \lambda^{1/2}r$
	\begin{equation}\label{harmonic-coordinate-charts-close-to-Euclideac-space-*}
	\left\|y_{\lambda}^j\circ \varphi_\lambda^{-1}-x^{j}\right\|_{C^{2,\alpha}\left(B_{R}^{n}(0)\right)}\leq \Psi(\lambda^{-1}\,|\,r,\alpha,Q,n)
	\end{equation}
	and
	\begin{equation}\label{metric-tensor-close-to-Euclideac-space-*}
	\left\|h_{\lambda,st}\circ \varphi_\lambda^{-1}-\delta_{st}\right\|_{C^{1,\alpha}\left(B_{R}^{n}(0)\right)}\leq \Psi(\lambda^{-1}\,|\,r,\alpha,Q,n),
	\end{equation}
	where $\Psi(\lambda^{-1}\,|\,r,\alpha,Q,n)\to 0$ as $\lambda^{-1}\to 0$ with the other variables fixed.
\end{lemma}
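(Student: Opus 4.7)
The plan is to apply the scale-invariance of the harmonic radius (Remark~\ref{scaling-invariant-harmonic-radius}) together with Arzel\`a--Ascoli and elliptic regularity. First, by replacing the Euclidean coordinates on the target with $A\cdot(y^{1},\dots,y^{n})$ for a suitable $A\in GL(n)$, which preserves harmonicity of each coordinate function and only modifies $Q$ by an additive dimensional constant, I may assume the normalization $h_{st}(p)=\delta_{st}$; this puts the base value of the metric tensor in canonical form.

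Next I would record the scaling behavior. For $\varphi_\lambda=(\lambda y^{1},\dots,\lambda y^{n})$ and $h_\lambda=\lambda^{2}h$, the chain rule gives $h_{\lambda,st}(y_\lambda)=h_{st}(\lambda^{-1}y_\lambda)$ and $\partial_{y_\lambda^{k}}h_{\lambda,st}=\lambda^{-1}(\partial_{y^{k}}h_{st})|_{\lambda^{-1}y_\lambda}$. The hypothesis \eqref{c1-estimate-for-star-tilde-h-*} therefore yields
$$\|\partial_{y_\lambda^{k}}h_{\lambda,st}\|_{C^{0}(B_R^{n}(0))}\le(\lambda r)^{-1}e^{Q},\qquad [\partial_{y_\lambda^{k}}h_{\lambda,st}]_{C^{0,\alpha}(B_R^{n}(0))}\le\lambda^{-1-\alpha}r^{-1-\alpha}e^{Q},$$
valid whenever $R\le\lambda^{1/2}r$, so that $B_R^{n}(0)\subset\lambda\Omega$ for sufficiently large $\lambda$. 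Integrating the gradient estimate along straight rays from the origin and using $h_{\lambda,st}(0)=h_{st}(p)=\delta_{st}$ yields $|h_{\lambda,st}-\delta_{st}|\le R(\lambda r)^{-1}e^{Q}$ on $B_R^{n}(0)$; combined with the preceding $C^{0,\alpha}$-bound on the derivative, this proves \eqref{metric-tensor-close-to-Euclideac-space-*} with a $\Psi(\lambda^{-1}\mid r,\alpha,Q,n)$ that vanishes as $\lambda\to\infty$.

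For \eqref{harmonic-coordinate-charts-close-to-Euclideac-space-*}, I would exploit that $y_\lambda^{j}$ solves the harmonic equation $\Delta_{h_\lambda}y_\lambda^{j}=0$, which in the pulled-back Euclidean chart becomes
$$\tilde h_\lambda^{ab}\,\partial_a\partial_b(y_\lambda^{j}\circ\varphi_\lambda^{-1})=\tilde h_\lambda^{ab}\widetilde{\Gamma}_{\lambda,ab}^{k}\,\partial_k(y_\lambda^{j}\circ\varphi_\lambda^{-1}),$$
where $\tilde h_\lambda=(\varphi_\lambda^{-1})^{*}h_\lambda$. Since $x^{j}$ is harmonic for the standard Euclidean metric and the difference between $\tilde h_\lambda$ and $g_{\mathbb R^{n}}$ is of order $\Psi(\lambda^{-1})$ in $C^{1,\alpha}$ by the previous step, the standard Schauder estimate applied to $u_\lambda^{j}=y_\lambda^{j}\circ\varphi_\lambda^{-1}-x^{j}$ on fixed Euclidean balls $B_R^{n}(0)$ gives the $C^{2,\alpha}$-control \eqref{harmonic-coordinate-charts-close-to-Euclideac-space-*}, with the same rate $\Psi(\lambda^{-1}\mid r,\alpha,Q,n)$.

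\textbf{Main obstacle.} The only mildly delicate point is the Schauder bookkeeping: one must keep the elliptic constants uniform on the nested Euclidean balls $B_R^{n}(0)\subset\lambda\Omega$ as $\lambda\to\infty$. This is automatic here because the coefficients $\tilde h_\lambda^{ab}$ converge uniformly to $\delta^{ab}$ on each fixed $B_R^{n}(0)$ by Step~2, so a single choice of Schauder constants serves all large $\lambda$.
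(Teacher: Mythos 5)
Your proof is correct and follows essentially the same route as the paper's: normalize $h_{st}(0)=\delta_{st}$ by a linear change of chart, observe that $\partial_{y_\lambda^{k}}h_{\lambda,st}$ picks up a factor $\lambda^{-1}$ (equivalently, that the harmonic radius scales by $\lambda$ as in Remark \ref{scaling-invariant-harmonic-radius}), and then apply interior Schauder estimates for the coordinate functions. The paper compresses the argument into a two-line appeal to scale-invariance and Schauder, whereas you spell out the integration-from-the-origin step for the $C^{0}$ control of $h_{\lambda,st}-\delta_{st}$ and the uniformity of the elliptic constants; the substance is identical.
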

\begin{proof}
	This is an elementary fact. Let us identify $(B_r(p,h),h)$ and $(\Omega, h=(\varphi^{-1})^*h)$ with the pullback metric via the inverse of $\varphi$. Then each $y^{j}$ is a harmonic function on $(\Omega,h)$. By a linear transformation, we can assume that $h_{st}(0)=\delta_{st}$. Put $y_{\lambda}^{j}=\lambda y^{j}$ and $h_{\lambda,st}=h_{\lambda}(\frac{\partial}{\partial y_{\lambda}^{s}},\frac{\partial}{\partial y_{\lambda}^{t}})$. Then $h_{\lambda,st}=h_{st}$, and by the rescaling property of the $C^{1,\alpha}$-harmonic radius in Remark \ref{scaling-invariant-harmonic-radius},
	\begin{equation}\label{c1-estimate-for-star-tilde-h-delta-*}
	\lambda^{1+\alpha} \cdot r^{1+\alpha}\left\|\frac{\partial h_{\lambda,st}}{\partial y_{\lambda}^{j}} \right\|_{C^{0,\alpha}\left(\Omega\right)}\leq e^{Q},
	\end{equation}
	where the $C^{0,\alpha}$-norm is taken in coordinates $\{y_{\lambda}^{j}\}$.
	
	Therefore, by \eqref{c1-estimate-for-star-tilde-h-delta-*} and the standard Schauder interior estimates,  $(y_{\lambda}^{1},\dots,y_{\lambda}^{n}):(\Omega,h_{\lambda})\to \mathbb{R}^{n}$ $C^{2,\alpha}$-converge to a standard Cartesian coordinate system in $\mathbb R^n$, and the metric $h_{\lambda}$ $C^{1,\alpha}$-converge to the standard Euclidean metric.
\end{proof}
Let us assume that $(X,h)$ is $(\delta,\rho)$-Reifenberg. Let $x\in X$ and $x_i\in M_i$ that converges to $x$. Let us consider the Gromov-Hausdorff convergence associated to the normal cover $\pi_i:(\widehat{U}_i, \hat g_i, \hat x_i)\to (B_{\frac{\rho}{2}}(x_i,g_i),x_i)$ as in the diagram \eqref{graph-local-cover}.

By Lemma \ref{lem-rewinding-reifenberg} and \cite{Anderson1990}, there exists $\delta(n)>0$ such that for given any $0<\alpha<1$, $Q>0$ and $0\leq\delta\leq\delta(n)$, the $C^{1,\alpha}$-harmonic radius of $(B_{\frac{\rho}{4}}(\hat{x}_i,\hat g_i),\hat{g}_{i})$ are uniformly bounded below by some constant $r_{1}=r_{1}(n,r,v,\rho,\alpha,Q)>0$. Assume that $\hat x_i$ converges to $\hat x\in Y$. By the continuity of $C^{1,\alpha}$-harmonic radius in Proposition \ref{continuity-of-harmonic-radius}, so is $B_{\frac{\rho}{4}}(\hat{x},\hat{h})\subset Y$.

After blowing up by $\delta^{-1}$, it follows from the definition of $(\delta,\rho)$-Reifenberg and Lemma \ref{lem-standard-blow-up} that
\begin{equation}\label{commutative-diagram-for-limit-space1-*}
\begin{CD}
	(B_{\delta^{-1/2}\rho/2}(\hat {x},\hat h_\delta),\hat {h}_{\delta}) @>C^{1,\alpha}>\delta\to 0>  (\mathbb{R}^{n},g_{\mathbb{R}^{n}})\\
	@V\pi_{\delta}VV @VV \pi_0 V\\
	(B_{\delta^{-1/2}\rho/2}(x,h_{\delta}),h_{\delta}) @>GH>\delta\to 0 > (\mathbb{R}^{m},g_{\mathbb{R}^{m}})
\end{CD}
\end{equation}
where $\pi_\delta$ is the restriction of $\pi_\infty$ in \eqref{graph-local-cover} on the local balls, $h_\delta=\delta^{-1}h$, and $\hat h_\delta=\pi_\delta^*(h_\delta)$. After passing to a subsequence, as $\delta\to 0$ $\pi_{\delta}$
converge to a submetry $\pi_{0}$, which is the canonical projection from $\mathbb{R}^{n}$ to $\mathbb{R}^{m}$.

Up to a suitable diffeomorphism, we view $B_{r_1}(\hat x_i,\hat g_i)$ as a fixed domain in $\mathbb R^n$ with metric $\hat g_i$. Let $\varphi_i:B_{r_1}(\hat x_i,\hat g_{i})\to \mathbb R^n$ be a sequence of harmonic coordinates $C^{2,\alpha}$-converges to a limit harmonic coordinates $\varphi:B_{r_1}(\hat x, \hat h)\to \mathbb R^n$ as $i\to \infty$. Let $\hat g_{i,\delta}=\delta^{-1}\hat g_i$. By Lemma \ref{lem-standard-blow-up} again, each
\begin{equation}\label{special-harmonic-coordinates}
\varphi_{i,\delta}=\delta^{-1/2}\varphi_i:B_{\delta^{-1/2}r_1}(\hat x_i,\hat g_{i,\delta})\to \mathbb R^n
\end{equation}
is uniformly $C^{2,\alpha}$-close to a Cartesian coordinates $(\hat x_{i}^1,\dots,\hat x_{i}^n)$ on $\mathbb R^n$ as $\delta\to 0$. By passing to a subsequence of $\{i\}$, we may assume that $(\hat x_{i}^1,\dots,\hat x_{i}^n)$ converges to a Cartesian coordinates $(\hat x^1,\dots,\hat x^n)$ with the following regularities: For any fixed $0<R<\delta^{-1/2}r_1$,
\begin{equation}\label{harmonic-coordinate-charts-close-to-Euclideac-space}
\left\|\varphi_{i,\delta}^j-\hat{x}_i^{j}\right\|_{C^{2,\alpha}\left(B_{R}^{n}(0)\right)}\leq \Psi(\delta\,|\,r_1,\alpha,Q,n)
\end{equation}
and
\begin{equation}\label{metric-tensor-close-to-Euclideac-space}
\left\|\hat{g}_{i,\delta,st}^*-\delta_{st}\right\|_{C^{1,\alpha}\left(B_{R}^{n}(0)\right)}\leq \Psi(\delta\,|\,r_1,\alpha,Q,n),
\end{equation}
where $(\hat x_i^1,\dots, \hat x_i^n)$ is the limit Cartesian coordinates system of $\varphi_{i,\delta}$ as $\delta\to 0$, $\hat g_{i,\delta,st}^*$ is the metric matrix of $\hat g_{i,\delta}^*$ in coordinates $\varphi_{i,\delta}$, and the norm is taken in the Euclidean coordinates $\{\hat{x}_i^{j}\}$ .

Furthermore, up to composing an orthonormal transformation on $(\hat x^1,\dots,\hat x^n)$ (also on $\varphi_{i,\delta}$), we assume that the limit projection $\pi_0: \mathbb R^n\to \mathbb R^m$ in the diagram \eqref{commutative-diagram-for-limit-space1-*}
\begin{equation}\label{projected-cartesian-coordinates}
\pi_0(\hat x^1,\dots,\hat x^m,\dots,\hat x^n)=(\hat x^1,\dots, \hat x^m),
\end{equation}
gives rise to a Cartesian coordiates on $\mathbb R^m$.

\subsection{Construction of the adapted harmonic coordinates}\label{subsec-construction-adapted-coordinates}
Based on the preparation above, let us begin the construction of the adapted harmonic coordinates on $\widehat{U}_i$ and their limit $Y$.

By the definition of $(\delta,\rho)$-Reifenberg, for all sufficiently large $i$ we have the Gromov-Hausdorff distance $$d_{GH}(B_{\delta^{-1/2}\rho}(x_i,\delta^{-1}g_{i}),B_{\delta^{-1/2}\rho}^m(0))\le 2\delta^{1/2}.$$
Let $\alpha_i:B_{\delta^{-1/2}\rho}(x_i,\delta^{-1}g_{i})\to B_{\delta^{-1/2}\rho}^m(0)\subset \mathbb R^m$ be an $2\delta^{1/2}$-Gromov-Hausdorff approximation.
Let $\{e_{1},\dots,e_{m}\}$ be the orthonormal basis at the origin of $\mathbb{R}^{m}$ associated to the Cartesian coordinates $(\hat x^1,\dots, \hat x^m)$ in \eqref{projected-cartesian-coordinates}.

Let $g_{i,\delta}=\delta^{-1}g_i$. Then for each $j=1,\dots,m$, there exists $p_{i,j}\in B_{\delta^{-1/2}\rho}(x_{i},g_{i,\delta})$ pairwise $2\delta^{1/2}$-close to $\delta^{-1/2}\rho e_j$ for all large $i$.
Let
\begin{equation}\label{buseman-func-below}
b_{i}^{j}(\cdot)=d_{g_{i,\delta}}(p_{i}^{j},\cdot)-d_{g_{i,\delta}}(p_{i}^{j},x_{i}),
\end{equation}
where $d_{g_{i,\delta}}$ denotes the distance induced by $g_{i,\delta}$ on $M_i$.
Let $f_{i}^{j}$ be the solution of the following Dirichlet problem for $R\le \delta^{-1/4}\rho$,
\begin{equation}\label{construction-harmonic-base}
\begin{cases}
\Delta_{g_{i,\delta}} f_{i}^{j}=0, &\text{in}\ B_{4R}(x_{i},g_{i,\delta});\\
f_{i}^{j}=b_{i}^{j},&\text{on}\ \partial B_{4R}(x_{i},g_{i,\delta}),
\end{cases}
\end{equation}
where $\Delta_{g_{i,\delta}}$ is the Laplace-Beltrami operator associated with metric $g_{i,\delta}$. $\Ric(M_{i},g_{i,\delta})\geq -(n-1)\delta^{1/2}$,  $(f_{i}^1,\dots, f_i^{m}):B_R(x_i)\to \mathbb R^m$ forms a $\delta$-splitting map in the sense of Cheeger-Colding \cite{CC1996}.

By \cite[Theorem 6.68]{CC1996} (or \cite[Lemma 1.23]{Colding1997}, or the quantitative maximum principles \cite[\S 8]{Cheeger2001} together with Abresch-Gromoll's excess estimate \cite{Abresch-Gromoll}), there is $\delta(n)>0$ such that for any $0<\delta\le \delta(n)$ and $R\le \delta^{-1/4}\rho$, the following $C^{0}$-estimate holds for all sufficiently large $i$:
\begin{equation}\label{C0-close-to-distance-function-*}
\left|f_{i}^{j}-b_{i}^{j}\right|\leq \Psi(\delta\,|\,n,\rho),\quad \text{in} \  B_{2R}(p_{i},g_{i,\delta}).
\end{equation}
As
$\pi_{i}:(\widehat{U}_i,\hat{g}_{i,\delta})\to B_{\delta^{-1/2}\rho/2}(x_{i},g_{i,\delta})$
is locally isometric, $\hat f_i^j=f_i^j\circ \pi_i$ is also harmonic, i.e.,
\begin{equation}\label{harmonic-of-lifting-function}
\Delta_{\hat{g}_{i,\delta}}\hat {f}_{i}^{j}=0,\ \text{in} \ \pi_{i}^{-1}(B_{2R}(p_{i},g_{i,\delta})).
\end{equation}

In the following we show that $(\hat f_i^1,\dots, \hat f_i^m, \varphi_{i,\delta}^{m+1},\dots, \varphi_{i,\delta}^n)$ still forms a harmonic coordinates. Since $\left(B_{\delta^{-1/2}\rho/2}(\hat{x}_{i},\hat{g}_{i,\delta}),\hat{g}_{i,\delta}\right)\overset{C^{1,\alpha}}{\longrightarrow}(V,\hat{h}_{\delta})\subset (Y,\hat h_\delta=\delta^{-1}\hat h)$ as $i\to \infty$, there exists a sequence of diffeomorphisms
$\psi_{i}:(V,\hat{h}_{\delta})\to B_{\delta^{-1/2}\rho/4}(\hat{x}_{i},\hat{g}_{i,\delta}),$
with $\psi_{i}(\hat{x})=\hat{x}_i$, such that the pullback metrics $\hat{g}_{i,\delta}^{*}=\psi_{i}^{*}\hat{g}_{i,\delta}$ (sub-)converge to $\hat{h}_{\delta}$ in the $C^{1,\alpha}$-norm. From now on, in the following subsections let us identify $(B_{\delta^{-1/2}\rho/4}(\hat{x}_{i},\hat{g}_{i,\delta}),\hat{g}_{i,\delta})$ with $(V, \hat g_{i,\delta}^*)$ via $\psi_i$.

\begin{lemma}\label{lem-adapted-coordinates}
	 The map $(-\hat f_i^1,\dots, -\hat f_i^m, \varphi_{i,\delta}^{m+1},\dots, \varphi_{i,\delta}^n):B_R(\hat x,\hat g_{i,\delta}^*)\to \mathbb R^n$ still forms a harmonic coordinates that converges to the Cartesian coordinates $(\hat x^1,\dots, \hat x^n)$ given by \eqref{projected-cartesian-coordinates} in the $C^{2,\alpha}$-norm as $i\to \infty$ and $\delta \to 0$.
\end{lemma}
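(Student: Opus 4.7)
The plan is to first establish $C^{2,\alpha}$-convergence of the tuple to the Cartesian coordinates $(\hat x^1,\dots,\hat x^n)$ in the double limit $i\to\infty$ and $\delta\to 0$, and then deduce from this that it forms a harmonic coordinate chart. Since the Jacobian of the Cartesian limit is the identity matrix, $C^2$-convergence forces the Jacobian of the approximating tuple to be uniformly close to the identity on $B_R(\hat x, \hat g_{i,\delta}^*)$ for all sufficiently small $\delta$ and large $i$, hence non-degenerate. Combined with the harmonicity of every component (the last $n-m$ by construction via \eqref{special-harmonic-coordinates}, and the first $m$ by \eqref{harmonic-of-lifting-function}), this gives exactly a harmonic coordinate chart. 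The components $\varphi_{i,\delta}^k$ for $k>m$ already satisfy the required $C^{2,\alpha}$-convergence to $\hat x^k$ by \eqref{harmonic-coordinate-charts-close-to-Euclideac-space}-\eqref{metric-tensor-close-to-Euclideac-space}, so the work is entirely with the harmonic lifts $\hat f_i^j$ for $j\le m$.

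First I would derive uniform $C^{2,\alpha}$-bounds for $\hat f_i^j$ on any ball strictly smaller than $R$ via interior Schauder estimates. Expressed in the harmonic chart $\varphi_{i,\delta}$, the harmonic equation for $\hat f_i^j$ reduces to $\hat g_{i,\delta}^{st}\partial_s\partial_t\hat f_i^j=0$ with $C^{1,\alpha}$-uniform and uniformly elliptic coefficients by \eqref{metric-tensor-close-to-Euclideac-space}, while the uniform $C^0$-bound $|\hat f_i^j|\le R+\Psi(\delta\,|\,n,\rho)$ follows from \eqref{C0-close-to-distance-function-*} and the $1$-Lipschitz property of $b_i^j$ (vanishing at $x_i$). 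Passing to a subsequence, Arzel\`a--Ascoli produces a $C^{2,\alpha'}$-limit $\hat F^j_\delta$ on $(V,\hat h_\delta)$ (for any $\alpha'<\alpha$) which is harmonic with respect to $\hat h_\delta$.

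Next I would identify the limit via Busemann functions. The local isometry of $\pi_i$ and \eqref{C0-close-to-distance-function-*} give $|\hat f_i^j-\hat b_i^j|\le \Psi(\delta\,|\,n,\rho)$ with $\hat b_i^j=b_i^j\circ\pi_i$. Since $p_i^j$ is $2\delta^{1/2}$-close to $\delta^{-1/2}\rho\,e_j$ on the base and the limit submetry in \eqref{commutative-diagram-for-limit-space1-*} coincides with the canonical projection $\pi_0:\mathbb R^n\to\mathbb R^m$ with $\pi_0(\hat x^1,\dots,\hat x^n)=(\hat x^1,\dots,\hat x^m)$ by \eqref{projected-cartesian-coordinates}, the elementary expansion $|y-Le_j|-L\to -y^j$ on compact sets as $L\to\infty$ yields $\hat b_i^j(q)\to -\hat x^j(q)$ uniformly on compact subsets, first as $i\to\infty$ with $\delta$ fixed and then as $\delta\to 0$. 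Combined with the precompactness of Step~1, the double limit gives $-\hat f_i^j\to\hat x^j$ in $C^{2,\alpha'}$ for each $j\le m$; together with \eqref{harmonic-coordinate-charts-close-to-Euclideac-space} for $k>m$, this yields the full convergence of the tuple to $(\hat x^1,\dots,\hat x^n)$.

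The main obstacle is the double-limit argument in the Busemann identification: one must carefully order the two limits (first $i\to\infty$ with $\delta$ fixed to use $C^{1,\alpha}$-convergence of metrics on the cover, then $\delta\to 0$ to land on the Euclidean limit), control the Gromov-Hausdorff errors at level $i$ and the Busemann asymptotics as $\delta\to 0$ simultaneously, and ensure the combined $C^0$-error vanishes before invoking the Schauder upgrade. A subtlety is that the subsequential limit $\hat F^j_\delta$ must be shown to be independent of the choice of subsequence, which follows because its $C^0$-distance to a function converging to $-\hat x^j$ is controlled by $\Psi(\delta)$, pinning down the limit uniquely in the Euclidean double limit.
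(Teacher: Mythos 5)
Your proposal is correct and proves the lemma, but it takes a genuinely different route from the paper, and the difference is worth noting because it costs you a small amount of regularity.

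The paper's proof compares $\hat f_i^j$ directly with $-\varphi_{i,\delta}^j$ rather than with the Euclidean coordinate $-\hat x^j$. The crucial advantage is that $\varphi_{i,\delta}^j$ is itself a harmonic coordinate function for $\hat g_{i,\delta}^*$, so the difference $\hat f_i^j + \varphi_{i,\delta}^j$ is \emph{exactly} harmonic with respect to $\hat g_{i,\delta}^*$. Once one has the $C^0$-smallness of this harmonic function (obtained by exactly the Busemann/GHA triangle-inequality estimate you also use), the interior Schauder estimate upgrades it directly to $C^{2,\alpha}$-smallness, yielding the full $C^{2,\alpha}$-convergence quantitatively and without any subsequences. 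Your route instead compares $\hat f_i^j$ with $-\hat x^j$, whose pullback is not harmonic for $\hat g_{i,\delta}^*$, so you are forced into a compactness argument: uniform $C^{2,\alpha}$-bounds via Schauder, Arzel\`a--Ascoli, subsequential limits, and a separate identification of the limit via Busemann asymptotics. Arzel\`a--Ascoli only yields $C^{2,\alpha'}$-compactness for $\alpha'<\alpha$, so as written you land on $C^{2,\alpha'}$-convergence, slightly weaker than the lemma's stated $C^{2,\alpha}$. This is fixable: once you know $\hat f_i^j$ and $-\varphi_{i,\delta}^j$ are both $C^0$-close to $-\hat x^j$ and hence to each other, observing that their sum is harmonic recovers the full $C^{2,\alpha}$ by one more application of Schauder --- but at that point you have effectively rediscovered the paper's argument. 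Both approaches rest on the same two pillars (the $L^\infty$ comparison of the $\delta$-splitting map with the Busemann function, and elliptic regularity in the harmonic chart), so the only real gap is the missed observation that the natural comparison partner for $\hat f_i^j$ is another harmonic function.
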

\begin{proof}
	It suffices to show that $\hat f_i^j$ is $C^{2,\alpha}$-close to $-\varphi_{i,\delta}^j$ for every $j=1,\dots, m$.
	
	By \eqref{harmonic-coordinate-charts-close-to-Euclideac-space},  $\varphi_{i,\delta}$ is close to the Cartesian coordinates $(\hat x_{i}^j)$. Moreover, by the assumption under \eqref{special-harmonic-coordinates}, $(\hat x_{i}^j)$ converges to the Cartesian coordinates $(\hat x^j)_{j=1}^n$, which projects to the Cartesian coordinates $(\hat x^j)_{j=1}^m$.
	
	At the same time, let $b_i^j$ be the Buseman-typed function defined in \eqref{buseman-func-below} in the construction of $f_i^j$. Then by the choice of $p_i^j$, for any fixed $R>0$,
	\begin{equation}
	\left\|b_i^j+\hat x^j\circ \alpha_i\right\|_{C^0(B_R(x_i,g_{i,\delta}))}\le \Psi(i^{-1}, \delta\,|\, R, \rho)
	\end{equation}
	where $\alpha_i:B_{\delta^{-1/2}\rho}(x_i,g_{i,\delta})\to B_{\delta^{-1/2}\rho}^m(0)\subset \mathbb R^m$ is the Gromov-Hausdorff approximation.
	
	By the construction above, $\varphi_{i,\delta}$ converges to $(\hat x^1,\dots,\hat x^n)$ as $i\to \infty$ and $\delta\to 0$. Moreover, by \eqref{C0-close-to-distance-function-*} the difference between $f_i^j$ and $b_i^j$ goes to zero as $\delta\to0$ and $i\to \infty$. Let $(x^1,\dots,x^m)$ be the Cartesian coordinates on $\mathbb R^m$ in \eqref{commutative-diagram-for-limit-space1-*} such that $\hat x^j=x^j\circ \pi_0$. Then, by \eqref{projected-cartesian-coordinates} and the triangle inequality below
	$$ \|\hat f_i^j+\varphi_{i,\delta}^j\|\le \|f_i^j\circ \pi_i -b_i^j\circ\pi_i\|+\|b_i^j\circ \pi_i+ x^j\circ\alpha_i\circ  \pi_i\|+\|-x^j\circ \alpha_i\circ\pi_i+\hat x^j\|+\|-\hat x^j+\varphi_{i,\delta}^j\|,$$
	we derive
	\begin{equation*}
	\left\|\varphi_{i,\delta}^j+\hat{f}_i^{j}\right\|_{C^{0}\left(B_{2R}(\hat{x},\hat{h}_{\delta})\right)}\leq \Psi(i^{-1},\delta\,|\,n,r,v,\rho,\alpha,Q), \ \text{for each}\ j=1,\dots,m.
	\end{equation*}
	
	Since $\varphi_{i,\delta}^j+\hat{f}_i^{j}$ is also harmonic, by the Schauder interior estimates,
	\begin{equation}\label{C2-close-to-hat-fj}
	\begin{aligned}
	\left\|\varphi_{i,\delta}^j+\hat{f}_i^{j}\right\|_{C^{2,\alpha}\left(B_{R}(\hat{x},\hat{g}_{i,\delta}^*)\right)}&\leq C_{5}(n,r,v,\rho,\alpha,Q)\left\|\varphi_{i,\delta}^j+\hat{f}_i^{j}\right\|_{C^{0}\left(B_{R}(\hat{x},\hat{g}_{i,\delta}^*)\right)}
	\\&\leq \Psi(i^{-1},\delta\,|\,n,r,v,\rho,\alpha,Q),
	\end{aligned}
	\end{equation}
	where the $C^{2,\alpha}$-norm is taken in coordinates $\varphi_{i,\delta}$.

	Now together with \eqref{harmonic-coordinate-charts-close-to-Euclideac-space}
	and \eqref{metric-tensor-close-to-Euclideac-space}, \eqref{C2-close-to-hat-fj} implies that
	\begin{equation}
	(\hat y_i^1,\dots, \hat y_i^n):=(-\hat f_i^1,\dots, -\hat f_i^m, \varphi_{i,\delta}^{m+1},\dots, \varphi_{i,\delta}^n):B_{R}^n(0,\hat g_{i,\delta}^*)\to \mathbb R^n
	\end{equation} still forms a harmonic coordinates that converges to the Cartesian coordinates $(\hat x^1,\dots, \hat x^n)$ in the $C^{2,\alpha}$-norm as $i\to \infty$ and $\delta \to 0$, which satisfies
	\begin{equation}\label{adapted-charts-close-to-Euclideac-space}
	\left\|\hat y_i^j-\hat{x}^{j}\right\|_{C^{2,\alpha}\left(B_{R}^{n}(0,\hat g_{i,\delta}^*)\right)}\leq \Psi(i^{-1}, \delta\,|\,n,r,v,\rho,\alpha,Q)
	\end{equation}
	and
	\begin{equation}\label{adapted-metric-tensor-close-to-Euclideac-space}
	\left\|\hat{g}_{i,\delta,st}^*-\delta_{st}\right\|_{C^{1,\alpha}\left(B_{R}^{n}(0,\hat g_{i,\delta}^*)\right)}\leq \Psi(i^{-1},\delta\,|\,n,r,v,\rho,\alpha,Q).
	\end{equation}
	
\end{proof}

Now we are ready to prove Proposition \ref{prop-adapted-harmonic-coordinates}.

\begin{proof}[Proof of Proposition \ref{prop-adapted-harmonic-coordinates}]
	~
	\par
	Let $(\hat y_i^1,\dots, \hat y_i^n)=(\hat f_i^1,\dots, \hat f_i^m, \varphi_{i,\delta}^{m+1},\dots, \varphi_{i,\delta}^n):B_R(\hat x,\hat g_{i,\delta}^*)\to \mathbb R^n$  be the harmonic coordinate chart constructed in Lemma \ref{lem-adapted-coordinates}.
	Since $\hat g_{i,\delta}$ $C^{1,\alpha}$-converges to $\hat h$ as $i\to \infty$, by \eqref{adapted-charts-close-to-Euclideac-space}, $(\hat y_i^1,\dots, \hat y_i^n)$ $C^{2,\alpha}$-converges to a limit harmonic coordinates $(\hat y^1,\dots, \hat y^n):B_{R}(\hat x,\hat h_\delta)\to \mathbb R^n$ as $i\to \infty$.
	
	\par
	For each $j=1,\dots,m$, by the construction of $\hat y_i^j=\hat f_i^j=f_i^j\circ \pi_i$, its limits $\hat{y}^{j}$ takes the same value along every $\pi_\infty$-fiber, and thus it naturally descends to a $C^{2,\alpha}$-smooth function $f^{j}$ on $B_{R}(x,h_{\delta})\subset \delta^{-1/2}X$.

	\par
	What remains is to verify (\ref{C1-norm-control-for-metric-tilde-h}) and (\ref{Hessian-control-for-yj}). First, by \eqref{adapted-metric-tensor-close-to-Euclideac-space}, it is clear that the first inequality in (\ref{C1-norm-control-for-metric-tilde-h}) holds. Note that together with (\ref{Hessian-control-for-yj}), the first inequality implies the second in (\ref{C1-norm-control-for-metric-tilde-h}). It suffices to verify (\ref{Hessian-control-for-yj}).
	\par
	Secondly, by the $C^{2,\alpha}$-convergence of coordinate functions and \eqref{adapted-charts-close-to-Euclideac-space}, it is clear that (\ref{Hessian-control-for-yj}) holds.
	
	Now the proof of Proposition \ref{prop-adapted-harmonic-coordinates} is complete.
\end{proof}

\section{Harmonic radius estimate in terms of the volume}
This section is devoted to the proof of Theorem \ref{thm-Reifenberg-local-covering-geometry}.

\begin{proof}[Proof of Theorem \ref{thm-Reifenberg-local-covering-geometry}]
	~
	\par
	Let $\delta=\delta(n)$ be the constant in Theorem \ref{thm-bounded-local-covering-geometry}, and $X\in \mathcal{Z}_{n,\delta,r}^m(\delta)$.
	For \eqref{local-covering-geometry-1}, let us first prove that $X$ is a $C^{1,\alpha}$-Riemannian manifold with a positive harmonic radius.
	
	Indeed, let $(M_i,g_i)\overset{GH}{\longrightarrow}X$, where $(M_i,g_i)$ have $|\op{Ric}_{M_i}|\le (n-1)$ and $(\delta,r)$-Reifenberg local covering geometry. For any $x\in X$, let us consider the equivariant convergence of normal covers of $\frac{r}{2}$-balls in \eqref{graph-local-cover}.  Since every tangent cone $T_x$ at $x$ is $\delta$-close to $\mathbb R^m$, and the proofs of Propositions \ref{prop-C1alpha-regularity} and  \ref{prop-adapted-harmonic-coordinates} still go through for $X$. That is, $X$ is a smooth manifold and for any $x\in X$, the $\delta$-splitting map on $(M_i,g_i)$ defined by \eqref{construction-harmonic-base} from the Buseman functions by the closeness between $T_x$ and $\mathbb R^m$ gives rise to an adapted harmonic coordinate chart that descends from $Y$ in \eqref{graph-local-cover}, which satisfies the uniform regularities \eqref{C1-norm-control-for-metric-tilde-h} and \eqref{Hessian-control-for-yj}. Then by Theorem \ref{tech-theorem-*}, $X$ admits a $C^{1,\alpha}$-harmonic coordinate around any point $x\in X$. The continuity of $C^{1,\alpha}$-harmonic radius at points in $X$ yields a positive lower bound of the harmonic radius of $X$.

	
	Next, we show that the $C^{1,\alpha}$-harmonic radius at a point $x\in X$ satisfying
	\begin{equation}\label{local-volume-bound}
	\op{Vol}(B_R(x,X))\ge w>0
	\end{equation}
	admits a uniform bound $\ge r_0(n,r,w, R)>0$.
		
	Let us argue by contradiction. Assume that there is a sequence of spaces $(X_j,h_j)\in \mathcal{Z}_{n,\delta,r}^m(\delta)$, each of which contains a point $x_j\in X_j$ satisfying \eqref{local-volume-bound}, but the $C^{1,\alpha}$-harmonic radius $r_h(x_j,h_j)\to 0$. By passing to a subsequence we assume that $(X_j,x_j)\overset{GH}{\longrightarrow}(X,x)$.
	
	Let $(M_{j,i},g_{j,i})\overset{GH}{\longrightarrow}(X_j,h_j)$, where $(M_{j,i},g_{j,i})$ has bounded Ricci curvature and $(\delta,r)$-Reifenberg local covering geometry.
	By Theorem \ref{thm-smoothing-ricci-flow}, there is $t_0=t_0(n,r)>0$ such that any $(M_{j,i},g_{j,i})$ admits a smoothed metric $g_{j,i}(t_0)$ with uniformly higher regularities \eqref{ineq-smoothing-sec}. By passing to a diagonal subsequence, we assume that $$(M_{j,i},g_{j,i}(t_0))\overset{GH}{\longrightarrow}X_{j,t_0} \quad\text{as $i\to \infty$ for any fixed $j$, and }\quad X_{j,t_0}\overset{GH}{\longrightarrow}X_{t_0}.$$
	
	Since $(X_j,h_j)$ is regular and $X_{j,t_0}$ is $e^{2t_0}$-bi-Lipschitz to $(X_j,h_j)$, by Lemma \ref{lem-Euclidean-quotient}, each $X_{j,t_0}$ is also regular. Hence by Proposition \ref{prop-C1alpha-regularity} $X_{j,t_0}$ is a smooth Riemannian manifold $(X_j,h_j(t_0))$. Moreover, by O'Neill's formula applied on the Riemannian submersion $\pi_{t_0,\infty}$ in \eqref{graph-local-cover-smoothed}, the sectional curvature of $h_j(t_0)$ is bounded below uniformly by $C(n,r)t_0^{-1/2}$.
	
	Since the volume of $B_{2R}(x_j,h_j(t_0))$ is bounded below by $e^{-2mt_0}w$, their limit $X_{t_0}$ is an Alexandrov space with curvature $\ge C(n,r)t_0^{-1/2}$ of Hausdorff dimension $m$.
	
	By the proof of Theorem \ref{thm-limit-bounded-sec}, the sectional curvature of $(X_j,h_j(t_0))$ is also bounded uniformly from above. By \cite[Theorem 4.7]{CGT1982} the injectivity radius of $(X_j,h_j(t_0))$ is bounded below by $i_0(n,r,w,R)>0$.
	By Cheeger-Gromov's $C^{1,\alpha}$-precompactness, $X_{t_0}$ is regular. Then by the $e^{2t_0}$-bi-Lipschitz equivalence between $X_{t_0}$ and the original limit $X$ and Lemma \ref{lem-Euclidean-quotient} again, $X$ is also regular.
	
	Therefore, by the first part of \eqref{local-covering-geometry-1} $X$ is a $C^{1,\alpha}$-Riemannian manifold that admits a positive harmonic radius $r_\infty>0$. In particular, for any $p\in X$, $(X,\epsilon_i^{-1}h,p)$ is $\varkappa(\epsilon_i)$-close to $(T_pX,o)=(\mathbb R^m,0)$. Then $(X_j,\epsilon_i^{-1}h_j,x_j)$ is also $\varkappa(\epsilon_i)$-close to an Euclidean space $(\mathbb R^m,0)$ for fixed $i$ and any large $j$. Then harmonic coordinate charts of a definite radius can be constructed at $x_j\in (X_j,h_j)$ by the proof of Theorem \ref{thm-bounded-local-covering-geometry}. It contradicts to that $r_j\to 0$.
	
	Now Lemma \ref{lem-standard-blow-up} and Theorem \ref{thm-fibration-of-C1-manifolds} together imply \eqref{local-covering-geometry-2}.
\end{proof}

For limit spaces under bounded local covering geometry, we have the following $C^{1,\alpha}$-regularity that depends on the space itself.

Let $\mathcal{X}_{n,r,v}^m(\delta)$ be the set consisting of all compact Ricci-limit spaces of Riemannian $n$-manifolds with $|\Ric_M|\leq n-1$ and $(r,v)$-local covering geometry, such that each element $X\in \mathcal{X}_{n,r,v}^{m}(\delta)$ is $\delta$-almost regular.

\begin{corollary}\label{cor-reg-rewinding-volume} Let $(M_i,g_i)\overset{GH}{\longrightarrow} X\in \mathcal{X}_{n,r,v}^m(\delta)$, where $(M_i,g_i)$ are Riemannian $n$-manifolds with $|\op{Ric}_{M_i}|\le n-1$ and $(r,v)$-local covering geometry. Then the followings hold for $\delta=\delta_2(n)$.
	\begin{enumerate}
		\item\label{cor-reg-rewinding-volume-1}  For any sufficient large $i$ and any $x_i\in M_i$, the preimages of $x_i$ in the universal cover of $B_{r}(x_i)$ admit a uniform $C^{1,\alpha}$-harmonic radius $\ge r_0(X)>0$.
		\item\label{cor-reg-rewinding-volume-2} Any element $X\in \mathcal{X}_{n,r,v}^m(\delta)$ is regular in the sense that for any $x\in X$, any tangent cone $T_xX$ at $x$ is isometric to $\mathbb{R}^{m}$.
	\end{enumerate}
\end{corollary}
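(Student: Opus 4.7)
My plan is to reduce the statement to the $(\delta,\rho)$-Reifenberg setting already treated by Theorem \ref{thm-bounded-local-covering-geometry}, using Lemma \ref{lem-rewinding-reifenberg} to transport the almost-regularity of $X$ into a Reifenberg condition on the local universal covers of the $M_i$. The key preparatory step is to convert pointwise tangent-cone information on $X$ into uniform ball-scale Gromov-Hausdorff closeness. Fix $\delta_2(n)=\min\{\delta(n),\epsilon(n)\}$, where $\delta(n)$ is the constant from Theorem \ref{thm-bounded-local-covering-geometry} and $\epsilon(n)$ is from Lemma \ref{lem-Euclidean-quotient}. Let $\epsilon_0(n)$ be the constant from Lemma \ref{lem-rewinding-reifenberg} applied with target Reifenberg constant $\delta(n)$.

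For \eqref{cor-reg-rewinding-volume-1}, I first claim that because $X$ is compact and every tangent cone at every $x\in X$ is $\delta_2(n)$-close to $(\mathbb{R}^m,0)$, there exists $\rho_X>0$ such that
\begin{equation*}
d_{GH}(B_{\rho_X}(x),B_{\rho_X}^m(0))\le \epsilon_0(n)\cdot \rho_X,\qquad \forall\,x\in X.
\end{equation*}
This is a standard compactness/contradiction argument: if it failed, a blow-up sequence $(X,s_j^{-1}h,x_j)$ with $s_j\to 0$ would subconverge (along a diagonal) to a tangent cone at a limit point $x_\infty$, violating $\delta_2(n)$-closeness for $j$ large. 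By the Gromov-Hausdorff convergence $M_i\to X$, the same inequality (with $\epsilon_0(n)$ replaced by $2\epsilon_0(n)$, say) holds for $B_{\rho_X}(x_i,g_i)$ and every $x_i\in M_i$ once $i$ is large. Now Lemma \ref{lem-rewinding-reifenberg}, invoked with the volume datum $v>0$ coming from $(r,v)$-local covering geometry, upgrades this to: every preimage $\tilde x_i$ in the universal cover of $B_{\rho_X}(x_i)$ is a $(\delta(n),\rho(n,v,\delta(n)))$-Reifenberg point. Anderson's $C^{1,\alpha}$-convergence theorem (Theorem \ref{thm-An-convergence}, together with the usual harmonic radius estimate on local covers used throughout Section 5) then yields the uniform lower bound $r_0(X)=r_0(n,r,v,\rho_X,\alpha,Q)>0$ on the $C^{1,\alpha}$-harmonic radius of $\tilde x_i$.

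For \eqref{cor-reg-rewinding-volume-2}, with \eqref{cor-reg-rewinding-volume-1} in hand I run the equivariant Gromov-Hausdorff convergence \eqref{graph-local-cover} at scale $\rho_X$ around each $x\in X$. By the harmonic radius control just obtained on $\widehat{U}_i$, the argument of Proposition \ref{prop-C1alpha-regularity} goes through verbatim: the limit normal cover $(Y,\hat h)$ is a $C^{1,\alpha}$-Riemannian $n$-manifold with a proper isometric action of the limit group $G$, and $X=Y/G$ locally. Consequently any tangent cone $T_xX$ is isometric to $\mathbb{R}^n/H$ for a closed subgroup $H\le \mathrm{Iso}(\mathbb{R}^n)$ arising from the slice representation of $G$ at a preimage of $x$. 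Lemma \ref{lem-Euclidean-quotient} says such a quotient is either Euclidean or at least $\epsilon(n)$-far from any Euclidean space, while the $\delta_2(n)$-almost regularity hypothesis forces $T_xX$ to be $\delta_2(n)\le \epsilon(n)$-close to $\mathbb{R}^m$. The only option is that $T_xX$ is Euclidean, and since the Colding-Naber dimension of $X$ is $m$, it must be isometric to $\mathbb{R}^m$.

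The main obstacle is the pointwise-to-uniform upgrade in part \eqref{cor-reg-rewinding-volume-1}: the hypothesis $\delta$-almost regular controls tangent cones but not any fixed scale, so one cannot directly invoke Lemma \ref{lem-rewinding-reifenberg} without first extracting the uniform scale $\rho_X$. Compactness of $X$ is essential here, and explains why the resulting harmonic radius bound $r_0(X)$ is allowed to depend on the limit space $X$ rather than on the universal constants $n,r,v$ alone. Once $\rho_X$ is produced, the rest is a routine application of tools already developed in the paper.
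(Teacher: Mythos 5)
Your proposal reverses the logical order of the paper: the authors first prove part \eqref{cor-reg-rewinding-volume-2} directly and then use it to derive part \eqref{cor-reg-rewinding-volume-1}, whereas you attempt to prove \eqref{cor-reg-rewinding-volume-1} first by a uniformization-over-$X$ argument. This inversion creates a genuine gap in your compactness step.

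Specifically, you claim that if there were $\rho_j\to 0$ and $x_j\in X$ with $d_{GH}(B_{\rho_j}(x_j),B^m_{\rho_j}(0))>\epsilon_0\rho_j$, then the rescaled spaces $(X,\rho_j^{-1}h,x_j)$ would ``subconverge (along a diagonal) to a tangent cone at a limit point $x_\infty$.'' This is false as a general statement about metric spaces: blow-ups with moving basepoints yield \emph{iterated} tangents, which need not be tangent cones at the limit basepoint (already $d(x_j,x_\infty)/\rho_j\to\infty$ can occur, so the rescaled basepoint escapes to infinity in the tangent cone at $x_\infty$). For Ricci-limit spaces, the conclusion you want — uniform ball-scale closeness from pointwise tangent-cone closeness — is provable, but only by invoking Cheeger--Colding almost-rigidity/propagation (so that one-scale closeness at $x$ propagates to concentric sub-balls and to nearby centers), not by the blow-up argument you give. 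Since you present the blow-up as the justification and do not invoke any propagation result, the step as written does not establish the existence of $\rho_X$.

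The paper's route circumvents this issue cleanly: to prove \eqref{cor-reg-rewinding-volume-2}, they fix one $x\in X$ and one tangent cone $(T_x,o)=\lim (s_jX,x)$, run the \emph{fixed-basepoint} blow-up through the manifolds $M_{i_j}$ and their normal covers at scales $s_j^2$, apply Lemma~\ref{lem-rewinding-reifenberg} (the rewinding volume bound descends to all smaller scales by Bishop--Gromov on the cover) to deduce the limit cover point $\tilde x$ is regular, and then conclude $T_x\cong\mathbb{R}^m$ by Lemma~\ref{lem-Euclidean-quotient}. With \eqref{cor-reg-rewinding-volume-2} in hand, the limit $Y/G$ of $B_{r/2}(x_i,g_i)$ is regular, the proof of Lemma~\ref{lem-rewinding-reifenberg} then shows $Y$ is regular, and the required uniform Reifenberg scale $\rho(X)$ for \eqref{cor-reg-rewinding-volume-1} falls out of the fact that $Y$ is a non-collapsed regular Ricci-limit (hence $C^{1,\alpha}$-Riemannian by Anderson), together with Colding's volume convergence. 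Your treatment of part \eqref{cor-reg-rewinding-volume-2} given \eqref{cor-reg-rewinding-volume-1} is fine, but since \eqref{cor-reg-rewinding-volume-1} was not correctly established, the argument as a whole does not go through; you should either supply the Cheeger--Colding propagation argument to extract $\rho_X$, or reorder the proof as the paper does and prove \eqref{cor-reg-rewinding-volume-2} by the pointwise blow-up first.
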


\begin{proof}
	Let $\delta_2(n)=\epsilon(n)/2$, where $\epsilon(n)$ is the constant that satisfies both Lemma \ref{lem-rewinding-reifenberg} and Lemma \ref{lem-Euclidean-quotient}.
	
	Suppose that $x_i\in M_i$ converges to $x\in X$, and $(s_jX, x)$ converges to a tangent cone $(T_x,o)$ that is $\delta_2(n)$-close to $\mathbb R^m$, where $s_j\to \infty$. Let us consider the equivariant pointed Gromov-Hausdorff convergence
	$$\begin{CD}
	(B_{s_j}(\tilde x_{i_j},s_j^2\tilde g_{i_j}), \tilde x_{i_j}, \Gamma_i) @>GH>> (Y, \tilde x, G)\\
	@V\pi_iVV @VV\pi V\\
	(B_{s_j}(x_{i_j},s_j^2g_{i_j}),x_{i_j}) @>GH>> (T_x,o)
	\end{CD}$$

	By Lemma \ref{lem-rewinding-reifenberg}, for any large $j$, $\tilde x_{i_j}\in B_{s_j}(\tilde x_{i_j},s_j^2\tilde g_{i_j})$ is a $(\delta(n),\rho(n,v))$-Reifenberg point, where $\delta(n)$ is the constant in \eqref{def-Reifenberg-*}. By $|\op{Ric}_{M_i}|\le n-1$ and the arguments in \cite{Anderson1990}, the $C^{1,\alpha}$-harmonic radius $r_h(\tilde x_{i_j},s_j^2\tilde g_{i_j})\ge r_0(n,v)>0$. By the $C^{1,\alpha}$-precompactness, $\tilde x$ is a regular point. Then by Lemma \ref{lem-Euclidean-quotient}, $T_x$ is isometric to $\mathbb R^m$. So we derive \eqref{cor-reg-rewinding-volume-2}.
	
	For \eqref{cor-reg-rewinding-volume-1}, let us consider the equivariant convergence of the normal covers of $B_{\frac{r}{2}}(x_i,g_i)$ in \eqref{graph-local-cover}. By Lemma \ref{lem-Euclidean-quotient}, every point in the limit space $Y/G$ of $B_{\frac{r}{2}}(x_i,g_i)$ is regular. Then by the proof of Lemma \ref{lem-rewinding-reifenberg}, the limit space of the normal cover $\widehat{B}(x_i,\frac{r}{2},r)$ is also regular, and thus $\hat x_i\in \widehat{B}(x_i,\frac{r}{2},r)$ is $(\delta(n),\rho(X))$-Reifenberg. Now   \eqref{cor-reg-rewinding-volume-1} follows.
\end{proof}

\section{Construction of canonical fibration under bounded Ricci curvature}
In this section, we prove Theorem \ref{thm-fibration-of-C1-manifolds}.

Let us first observe that Theorem \ref{thm-fibration-of-C1-manifolds} can be reduced to the case that $\rho=1$, where $\rho$ is the radius appearing in the Reifenberg condition and bounded local covering geometry.

Indeed, let $(M,g)$ be a closed Riemannian $n$-manifold with bounded Ricci curvature and $(\delta(n),\rho)$-Reifenberg local covering geometry, and $(X,h)$ a closed $C^{1,\alpha}$-Riemannian $m$-manifold in $\mathcal{Y}_{n}^{m}(\delta(n),\rho)$ for $0<\alpha<1$ and $0<\rho\le 1$, such that $$d_{GH}((M,g),(X,h))=\epsilon\cdot \rho.$$
If $\rho<1$, up to a rescaling $\rho^{-2}$ on the metric $g$ and $h$, $(M,\rho^{-2}g)$ is of $(\delta(n),1)$-Reifenberg local covering geometry and $(X,\rho^{-2}h)\in \mathcal X_n^m(\delta(n),1)$ with $$d_{GH}((M,\rho^{-2}g),(X,\rho^{-2}h))=\epsilon.$$ By viewing $g$ and $h$ as the rescaled metrics $\rho^{-2}g$ and $\rho^{-2}h$ respectively, we have $\rho=1$ and $$d_{GH}((M,g),(X,h))=\epsilon,$$ such that all rescaling invariant constants as in \eqref{thm-fukaya-1}-\eqref{thm-fukaya-3} remain the same.

We first prove a slightly weaker version of Theorem \ref{thm-fibration-of-C1-manifolds}. The center of mass technique below was applied earlier in \cite{NaberTian2016} and \cite{Huang2020} under the similar settings. For the difference see Remark \ref{rem-fibration-difference}.

\begin{proposition}\label{prop-C2alpha-fibration}
	Let $(M,g)$ be a closed Riemannian $n$-manifold with $|\op{Ric}_M|\le n-1$ and $(\delta(n),1)$-Reifenberg local covering geometry, and $(X,h)\in \mathcal{Y}_{n}^{m}(\delta(n),1)$.
	If $d_{GH}(M,X)\le \epsilon<\epsilon(n)$, then there is a $C^{2,\alpha}$-smooth fibration $f:M\to X$ that satisfies
	\begin{enumerate}
		\item \label{local-fibration-to-X1} $f:(M,g)\to (X,h)$ is a $\varkappa(\epsilon\,|\,n)$-almost Riemannian submersion,
		where after fixing $n$, $\varkappa(\epsilon\,|\,n)\to 0$ as $\epsilon\to 0$;
		\item \label{local-fibration-to-X2} the fiber $F_{y}=f^{-1}(y)$ has intrinsic diameter
		$\diam_{g}(F_{y}) \le C(n)\epsilon$ for any $y\in X$;
		\item \label{local-fibration-to-X3} the second fundamental form of $f$ satisfies
		$|\nabla^2f|_{g,h}\le C(n)$;
		\item \label{local-fibration-to-X4} $F_{y}$ is diffeomorphic to an infra-nilmanifold.
	\end{enumerate}
	
\end{proposition}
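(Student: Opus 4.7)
The plan is to construct $f$ by gluing locally defined Cheeger--Colding $\delta$-splitting maps via the Karcher center of mass with respect to a smoothed nearby metric on $X$, then to derive all regularity estimates from the $C^{2,\alpha}$-compactness of the adapted harmonic coordinates on the local covers.

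\emph{Setup.} Since $(X,h)$ is only $C^{1,\alpha}$ and may admit no standard exponential map, I cannot directly apply Karcher's center of mass with $h$. Instead, I invoke Theorem \ref{thm-bounded-local-covering-geometry} to fix a smoothed nearby metric $h(t_0)$ on $X$ with $t_0=t_0(n)>0$ depending only on $n$; this metric is $C^{1,\alpha}$-close to $h$ and satisfies $|\sec_{h(t_0)}|\le C(n)$, hence admits a definite convex radius $\iota_0=\iota_0(n)>0$. All center-of-mass computations will be performed in $(X,h(t_0))$, and the $C^{1,\alpha}$-closeness between $h(t_0)$ and $h$ will transfer the resulting estimates back to the original metric up to controlled multiplicative constants.

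\emph{Local maps.} I would cover $X$ by balls $\{B_{r_0}(y_\alpha)\}_\alpha$ of a definite radius $r_0=r_0(n)>0$ with uniformly controlled multiplicity. For each $\alpha$, pick $x_\alpha\in M$ that is $\epsilon$-close to $y_\alpha$ under the Gromov--Hausdorff approximation. Following the construction in Section \ref{subsec-construction-adapted-coordinates}, I solve the Dirichlet problem \eqref{construction-harmonic-base} with Busemann-type boundary data on $B_{2r_0}(x_\alpha,g)$ to obtain a harmonic $\delta$-splitting map $f_\alpha=(f_\alpha^1,\dots,f_\alpha^m)$, and identify its image with $B_{r_0}(y_\alpha)\subset X$ through the adapted harmonic coordinates descending from the normal cover. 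By Lemma \ref{lem-rewinding-reifenberg}, \cite{Anderson1990}, and Schauder estimates, the lifted components $\hat f_\alpha^j$ admit a uniform $C^{2,\alpha}$-bound on the local covers; this will give $|df_\alpha-\mathrm{Id}|\le \varkappa(\epsilon\,|\,n)$ on horizontal directions and $|\nabla^2 f_\alpha|_{g,h(t_0)}\le C(n)$.

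\emph{Global gluing and regularity \eqref{local-fibration-to-X1}--\eqref{local-fibration-to-X3}.} Let $\{\phi_\alpha\}$ be a partition of unity on $M$ subordinate to $\{f_\alpha^{-1}(B_{r_0/2}(y_\alpha))\}$ with $|\nabla^k\phi_\alpha|\le C(n,k)$ for $k\le 2$. For each $p\in M$, the points $\{f_\alpha(p):\phi_\alpha(p)>0\}$ all lie inside a ball of radius $\varkappa(\epsilon\,|\,n)\ll \iota_0$ in $(X,h(t_0))$. I would define $f(p)$ as the Karcher center of mass of this data with respect to $h(t_0)$. The implicit function theorem applied to the center-of-mass equation then expresses $df$ and $\nabla^2 f$ as algebraic combinations of the $df_\alpha$, $\nabla^2 f_\alpha$, $d\phi_\alpha$, $\nabla^2\phi_\alpha$, and the curvature tensor of $h(t_0)$. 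Since every input is bounded by $C(n)$ once $t_0$ is fixed, this simultaneously yields \eqref{local-fibration-to-X1} (averaging preserves the almost-isometry property of $df_\alpha$ on horizontal directions) and the optimal Hessian bound \eqref{local-fibration-to-X3}; property \eqref{local-fibration-to-X2} will follow from \eqref{local-fibration-to-X1} combined with the $\epsilon$-GH approximation.

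\emph{Fibers and main obstacle.} For \eqref{local-fibration-to-X4}, I would apply the generalized Margulis lemma \cite{KW}: the local fundamental group $\Gamma_{\epsilon,1}(x)$ contains a nilpotent subgroup of finite index, and by Naber--Zhang's $\epsilon$-regularity cited in the introduction its rank equals $n-m$; the connected components of each fiber $F_y$ are then identified with quotients of the Malcev completion of this nilpotent subgroup, following \cite{Fukaya1987}, \cite{HKRX2020}, \cite{Huang2020}. The main technical obstacle I anticipate is executing the center-of-mass differentiation so that the Hessian bound $|\nabla^2 f|\le C(n)$ is truly independent of $t_0$: this is precisely where the $t_0$-controlled estimates \eqref{full-rank1b}--\eqref{full-rank1c} on $h(t_0)$ and the $C^{2,\alpha}$-compactness of the adapted harmonic coordinates on the local covers must be balanced carefully, and where earlier smoothing or $L^2$-splitting-map approaches fail to produce the optimal Hessian constant.
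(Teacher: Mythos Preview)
Your overall strategy coincides with the paper's: build local harmonic $\delta$-splitting maps $f_\alpha$ via \eqref{construction-harmonic-base}, identify their images in $X$ through the adapted harmonic coordinates descending from the local normal covers, and glue them by the Karcher center of mass with respect to the smoothed metric $h(t_0)$ from Theorem~\ref{thm-bounded-local-covering-geometry}. The paper organizes this through a two-scale rescaling (a fixed $\tau=\tau(n)$ for the local maps, then a further blow-up by $\delta=\tau\sqrt{\Psi(\epsilon/\tau\,|\,n)}$ before gluing), whereas you work at a single fixed scale $r_0(n)$; both lead to the same implicit-function computation for $df$ and $\nabla^2f$.

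There is, however, a genuine gap in your argument for \eqref{local-fibration-to-X1}. You assert that ``averaging preserves the almost-isometry property of $df_\alpha$ on horizontal directions,'' but this is false in general: a convex combination of linear almost-isometries need not be an almost-isometry unless the individual maps are $C^1$-close to one another. The paper obtains this $C^1$-closeness explicitly: since $u_{p_{j_1},\tau}$ and $A_{j_1j_2}u_{p_{j_2},\tau}$ are both harmonic and $C^0$-close on overlaps, the Cheng--Yau gradient estimate yields $|df_{j_1}-df_{j_2}|\le C(n)\sqrt{\Psi(\epsilon/\tau\,|\,n)}$ (see \eqref{C1-close-local-fibration}). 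This same $C^1$-closeness is also what makes the $O(1)$ cross-terms $\sum_\alpha d\phi_\alpha\otimes df_\alpha$ in $\partial_x^2F$ cancel against the $\partial_x(\partial_yF)\cdot df$ term in the implicit-function formula for $\nabla^2f$; without it your bound \eqref{local-fibration-to-X3} does not follow either. You should insert this step.

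Two smaller points. First, a partition of unity on $M$ with $|\nabla^2\phi_\alpha|\le C(n)$ is not immediate under a mere Ricci bound; the paper sidesteps this by defining the cutoffs through the smooth metric on the target, $\phi_j(x)=\phi(d_{h_{\delta,t_0}}(f_j(p_j),f_j(x)))$, so that their Hessians are controlled by $|\nabla^2f_j|$ and the curvature of $h(t_0)$. Second, your route to \eqref{local-fibration-to-X4} via the generalized Margulis lemma and Naber--Zhang is more indirect than necessary. The paper instead smooths $g$ to $g(t_0)$ by Theorem~\ref{thm-smoothing-ricci-flow}, observes that the already-established bounds \eqref{local-fibration-to-X2}--\eqref{local-fibration-to-X3} persist for $g(t_0)$ up to $C(n,t_0)$, so each fiber has bounded sectional curvature and small diameter, and then invokes Gromov's almost flat manifold theorem directly.
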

\begin{proof}[Proof of Proposition \ref{prop-C2alpha-fibration}]
	Let $\alpha:(M,g)\to (X,h)$ be an $\epsilon_1$-Gromov-Hausdorff approximation (for simplicity, $\epsilon_1$-GHA) with $\epsilon_1\le 2d_{GH}(M,X)$. Without loss of generality, we assume $\epsilon_1=\epsilon$.

	{\bf Step 1.} First, we prove that for $\tau=\tau(n,1)$, $0<\epsilon\le \tau^2$, and any $p\in M$, there is a local fibration $f_p:B_{\tau 100}(p,g)\to (X,h)$, which is a $\Psi(\epsilon\,|\,n)$-almost Riemannian submersion with a uniform second derivative control
	\begin{equation}\label{local-fibration-C2}
	|\nabla^2f_p|_{g,h}\le C(n),
	\end{equation}
	and $f_p$ satisfies
	\begin{equation}\label{local-fibration-close-GHA}
	\|f_p-\alpha\|_{C^0(B_{\tau 100}(p,g),(X,h))}\le \tau \cdot \Psi(\epsilon/\tau\,|\,n).
	\end{equation}

	Indeed, by the assumptions $(M,g)$ is of $(\delta(n),1)$-Reifenberg local covering geometry. Let us consider the equivariant closeness of the normal cover in the diagram \eqref{graph-local-cover-convergence}:
	\begin{equation}\label{graph-local-cover-convergence}
	\begin{CD}
	(\widehat{B}(p,\frac{1}{2},1), \hat g, \hat{p}, \Gamma) @>C^{1,\alpha}>\epsilon-\text{close}> (Y, \hat h, \hat y, G)\\
	@V\pi VV @V\pi_\infty VV\\
	(B_{\frac{1}{2}}(p,g),p) @>\alpha_p>\epsilon-\text{close}> (B_{\frac{1}{2}}(y,h),y)\subset Y/G,
	\end{CD}
	\end{equation}
	where $\alpha_p$ is a $\Psi(\epsilon\,|\,n)$-GHA, whose restriction on $B_{\frac{1}{4}}(p,g)$ coincides with $\alpha$.
	
	As the arguments before Lemma \ref{lem-adapted-coordinates} in Section \ref{subsec-construction-adapted-coordinates}, in the following we identify $B_{\frac{1}{4}}(\hat{p},\hat{g})$ with an open domain $V\subset Y$ with the the pullback metric $\hat{g}^*=\psi^*\hat{g}$ via a diffeomorphism $\psi:(V,\hat{h})\subset Y\to B_{\frac{1}{4}}(\hat{p},\hat{g})$, such that  $\psi(\hat{y})=\hat{p}$ and $\hat{g}^*$ is  $C^{1,\alpha}$-close to $\hat{h}$.
	
	Let $h_{\tau} = {\tau}^{-2}h$ and $g_\tau=\tau^{-2}g$. Since by Theorem \ref{thm-bounded-local-covering-geometry} the $C^{1,\alpha}$-harmonic radius of $(X,h)$ is no less than $r_0=r_0(n,1,\alpha,Q)>0$,  the rescaled $(X,h_\tau)$ has $C^{1,\alpha}$-harmonic radius at least $\tau^{-1}r_0$. By Lemma \ref{lem-standard-blow-up}, $B_{\tau^{-1/8}r_0}(\alpha(p),h_\tau)$ is $C(r_0,Q,n)\tau^{1/8}$-close to an Euclidean ball $B^m_{\tau^{-1/8}r_0}(0)\subset R^m$.
	
	Let us consider the harmonic $\delta$-splitting map constructed in \eqref{construction-harmonic-base},
	\begin{equation}\label{local-delta-splitting-map-*}
	u_{p,\tau}=(u^1,\dots,u^m):B_{200}(p,g_\tau)\to \mathbb R^m.
	\end{equation}
	Since the Ricci curvature of $g_\tau$ satisfies $\op{Ric}_{g_\tau}\ge -(n-1)\tau^2$. By the triangle inequality, $B_{\tau^{-1/8}r_0}(p,g_\tau)$ is $(C(r_0,Q,n)\tau^{1/8}+\epsilon/\tau)$-close to $B^m_{\tau^{-1/8}r_0}(0)$.
	Then there is $\tau=\tau(r_0,Q,n)$ such that for $0<\epsilon\le \tau^2$, \eqref{C0-close-to-distance-function-*} holds for $u^j$ and the Buseman functions $b^j$ as in \eqref{construction-harmonic-base}.
	
	In the following we fix $\tau=\tau(r_0,Q,n)$. By the proof of Proposition \ref{prop-adapted-harmonic-coordinates} applied on \eqref{graph-local-cover-convergence},
	the lifted harmonic functions $\hat u_{p,\tau}$ on $\widehat{B}(p,1/2,1)$ together with other harmonic functions form a $C^{1,\alpha}$-harmonic coordinate chart $\widehat{H}_{\hat p, \tau}:B_{100}(\hat p,\hat g^*_\tau)\to \mathbb R^n$, whose coordinate functions admit a uniform $C^{2,\alpha}$-norm bound $C(n)$.
	
	By the $C^{2,\alpha'}$-compactness for $0<\alpha'<\alpha$, $\widehat{H}_{\hat p, \tau}$ is $\Psi(\epsilon/\tau\,|\,n)$-$C^{2,\alpha'}$-close to an adapted harmonic coordinate chart $\widehat{H}_{\infty,\tau}:B_{100}(\hat p, \hat g^*_\tau)\to \mathbb R^n$ for $\pi_\infty$, which by Lemma \ref{almost-C1-harmonic-coordinate-chart-on-X} descends to an almost harmonic coordinate chart $H_{\infty,\tau}=(u_\infty^1,\dots,u_\infty^m)$ on $B_{100}(\alpha(p),h_{\tau})$, whose coordinate functions are $C^{2,\alpha'}$.
	
	By the almost commutative diagram \eqref{graph-local-cover-convergence},
	\begin{equation}\label{delta-splitting-C0-close}
	\left|u_{p,\tau}-H_{\infty,\tau}\circ \alpha\right|_{B_{100}(p,g_{\tau})}\le \Psi(\epsilon/\tau\,|\,n),
	\end{equation}
	and since $\hat u_{p,\tau}=u_{p,\tau}\circ\pi$,
	\begin{equation}\label{projection-C2-close}
	\left|u_{p,\tau}\circ \pi- H_{\infty,\tau}\circ \pi_\infty\right|_{C^{2,\alpha'}(B_{100}(\hat p,\hat g^*_\tau))}\le \Psi(\epsilon/\tau\,|\,n)
	\end{equation}
	
	Let us define $f_{p}=H_{\infty,\tau}^{-1}\circ u_{p,\tau}:B_{100}(p,g_{\tau})\to (X,h_\tau)$. Then \eqref{delta-splitting-C0-close} implies \eqref{local-fibration-close-GHA}.

	By \eqref{projection-C2-close}, $f_{p}\circ \pi:(V,\hat g^*)\to X$ is $C^{2,\alpha'}$-close to $\pi_\infty:(V,\hat h)\to X$, which is a Riemannian submersion. Hence $f_{p}$ is an $\varkappa(\epsilon\,|\,n)$-Riemannian submersion.

	By $\left|\nabla^2\hat u_{p,\tau}\right|_{B_{100}(\hat p,\hat g^*_\tau)}=\left|\nabla^2 u_{p,\tau}\right|_{B_{100}( p,g_\tau)}$ and the uniform bound on $H_{\infty,\tau}$ up to the 2nd covariant derivative in Lemma \ref{almost-C1-harmonic-coordinate-chart-on-X},
	$f_p$ admits a uniformly bounded Hessian. After rescaling back, we derive \eqref{local-fibration-C2}.
		
	{\bf Step 2.} Secondly, in order to obtain local fibrations that can be glued together, all operations below are done with respect to the fixed  $\epsilon$-GHA $\alpha:(M,g)\to (X,h)$.
	
	Let us consider the nearby metric $h(t_0)$ provided by Theorem \ref{thm-bounded-local-covering-geometry} for fixed $t_0=\frac{1}{2}\min\{T(n,1),\ln 2\}$. For $0<\delta\le \sqrt{t_0}$, let $h_{\delta,t_0}=\delta^{-2}h(t_0)$ and $h_\delta=\delta^{-2}h$. 	
	By (\ref{thm-bounded-local-covering-geometry}.1) for $\rho= 1$,
	$|\sec(X, h_{\delta,t_0})|\leq  C_1(n) t_0^{-1/2}\delta^2\leq  C_1(n) \delta$, and
	the $C^{1,\alpha}$-harmonic radius of $(X,h_{\delta,t_0})$ is at least $\delta^{-1}r_0(n)>0$.
	It follows that the injectivity radius of $(X,h_{\delta,t_0})$ admits a lower bound $\delta^{-1/2}i_0(n)>0$. Hence the convexity radius of $(X,h_{\delta,t_0})$ is no less than $\frac{1}{2}\delta^{-1/2}i_0(n)$. Without loss of generality, we assume that $\delta^{-1/2}i_0(n)\ge 100$.
	
	In the following we view the rescaled metric $g_\delta=(\tau/\delta)^2g_{\tau}$ as a rescaling of $g_{\tau}$, where $\tau=\tau(n,1)$ is provided by Step 1.
	Let $\{p_{j}\}_{j=1}^{N}$ be a $1$-net in $(M,g_{\delta})$ with $N$ depends only on the dimension $n$ of $M$. For each $p_{j}$, let $f_{j}:B_{100}(p_{j},g_{\tau})\to (X,h_\tau)$ be the local fibrations constructed in Step 1. Since $h_{\delta,t_0}$ is $e^{2t_0}$-bi-Lipschitz to $h_\delta$, \eqref{local-fibration-close-GHA} implies that
	\begin{equation}\label{epsilon-delta-close-local-fibration}
	\|f_{j}-\alpha\|_{C^0(B_{100\tau/\delta}(p_{j},g_{\delta}),(X,h_{\delta,t_0}))}\le (\tau/\delta)\cdot \Psi(\epsilon/\tau\,|\,n)
	\end{equation}
	with respect to $h_{\delta,t_0}$.
	
	By taking $\delta=\tau\sqrt{\Psi(\epsilon/\tau\,|\,n)}\to 0$ such that $(\tau/\delta)\cdot \Psi(\epsilon/\tau\,|\,n)\to 0$, we will glue such local fibrations $f_{j}$ together with respect to the smoothed metric $h_{\delta,t_0}$ to a $\varkappa(\epsilon,\delta\,|\,n)$-Riemannian submersion $f:(M,g_{\delta})\to (X,h_{\delta})$ with respect to the rescaled original metric $h_{\delta}$.
	
	Let $\phi:\mathbb R^1\to \mathbb R^1$ be a smooth cut-off function such that $\phi|_{[0,10]}\equiv 1$,  $\phi|_{[20,\infty)}\equiv 0$, and $|\phi'|,|\phi''|\le 10$.
	Let $\phi_{j}(x)=\phi (d_{h_{\delta,t_0}}(f_{j}(p_{j}),f_{j}(x)))$.
	Let us consider the energy function
	$$E:M\times X\to \mathbb R, \qquad E(x,y)=\frac{1}{2}\sum_j\phi_{j}(x)d_{h_{\delta,t_0}}(f_{j}(x),y)^2.$$
	By the construction of $\phi_{j}(x)$ and \eqref{epsilon-delta-close-local-fibration}, for $\Psi(\epsilon/\tau\,|\,n)<1$, $E(x,\cdot)$ is a strictly convex function in $B_{10}(\alpha_{\delta}(x), h_{\delta,t_0})$, and it takes a unique minimum point, $cm(x)$, that is $\sqrt{\Psi(\epsilon/\tau)}$-close to $\alpha_{\delta}(x)$ measured in $h_{\delta,t_0}$. We define
	\begin{equation*}
	f:(M,g)\to (X,h), \qquad f(x)=cm(x)
	\end{equation*}
	
	{\bf Step 3.} What remains is to verify (\ref{local-fibration-to-X1})-(\ref{local-fibration-to-X4}).
	\par
	Let us prove \eqref{local-fibration-to-X1}-\eqref{local-fibration-to-X3} first. By its definition $f$ is determined by the equations
	$$F(x,y)=\partial_y E=\sum_j\phi_j(x)\cdot \nabla \left(\frac{1}{2}d(f_j(x),\cdot)^2\right)=0,$$
	where $\partial_yE$ is the gradient of $E$ with respect to $y$.
	Note that, in the normal coordinates at $y=f(x)$, $F(x,y)$ can be written in the form $$F(x,y)=\left(-\sum_j\phi_j(x)f_j^1(x),\dots,-\sum_j\phi_j(x)f_j^m(x)\right),$$
	where $f_j(x)=(f_j^1(x),\dots, f_j^m(x))$ is the position vector of $f_j(x)$ in the normal coordinates of $y$.
	Then the differential of $f$ is determined by
	$df=-(\partial_y F)^{-1}\circ \partial_x F$, where
	$$-\partial_xF=\sum_j\left[d\phi_j\cdot f_j(x)+\phi_j(x)df_j \right].$$
	
	By the sectional curvature bound $C(n,1)t_0^{-1/2}$ of $h(t_0)$ provided by (\ref{full-rank1}.b),
	$\nabla^2 \frac{1}{2}r_{f_j(x)}^2$ is $\Psi_1(\delta\,|\,n,t_0)$-close to the identity matrix $E$, which is the Hessian of squared Euclidean distance. At the same time, by the Bishop-Gromov's relative volume comparison, the count of $j$ with non-vanishing $\phi_{j}(x)$ can be chosen at most $C_2(n)$. Hence $(\partial_yF)^{-1}$ is also $\Psi_2(\delta\,|\,n,t_0)$-close to $\left(\sum_j \phi_j(x)\right)^{-1}\cdot E$. It follows that for all sufficient small $\delta$,
	\begin{equation}\label{fibration-first-diff}
	\left|df-(\partial_yF)^{-1}\sum_j\phi_j(x) df_j\right|\le 2 \left(\sum_j \phi_j(x)\right)^{-1}\left|\sum_j d\phi_j(x)\cdot f_j(x)\right|.
	\end{equation}
	
	Since by \eqref{epsilon-delta-close-local-fibration} and the choice of $\delta$, $f_{j}(x)$ is  $\sqrt{\Psi(\epsilon/\tau\,|\,n)}$-close to $f(x)$ whenever $f_{j}(x)$ is well-defined,
	\begin{equation}\label{C0-small-local-fibration}
	|f_{j}(x)|\le \sqrt{\Psi(\epsilon/\tau\,|\,n)},\quad \text{for $x\in B_{40}(p_{j},g_{\delta})$.}
	\end{equation}
	Combing \eqref{fibration-first-diff} and \eqref{epsilon-delta-close-local-fibration} together, we derive
	\begin{equation}\label{fibration-first-diff-error}
	\left|df-\sum_j\frac{\phi_j(x)}{\sum_k \phi_k(x)} df_j\right|\le C_3(n)\left(\Psi_2(\delta\,|\,n,t_0)+\sqrt{\Psi(\epsilon/\tau\,|\,n)}\right).
	\end{equation}
	
	In order to show that $f$ is $\varkappa(\epsilon\,|\,n)$-almost Riemannian submersion, it suffices to show that the local fibrations $f_j$ nearby are $C^1$-close to each other.
	
	Indeed, by the definition of $f_j$ in Step 1, $H_{j,\infty,\tau}\circ f_{j}=u_{p_j,\tau}$ is a harmonic map, which by \eqref{epsilon-delta-close-local-fibration} is $\sqrt{\Psi(\epsilon/\tau\,|\,n)}$-close to each other up to a transformation in the intersection of their domains. In particular, for $j_1\neq j_2$
	\begin{equation}
	\left|u_{p_{j_1},\tau}-H_{j_1,\infty,\tau}\circ H_{j_2,\infty,\tau}^{-1}\circ u_{p_{j_2},\tau}\right|_{B_{40}(p_{j_1},g_\delta)\cap B_{40}(p_{j_2},g_\delta)}\le \sqrt{\Psi(\epsilon/\tau\,|\,n)}.
	\end{equation}
	At the same time, by the construction of $H_{j,\infty,\tau}$ (see also Proposition \ref{prop-adapted-harmonic-coordinates} and Lemma \ref{almost-C1-harmonic-coordinate-chart-on-X}), $H_{j_1,\infty,\tau}\circ H_{j_2,\infty,\tau}^{-1}$ on $B_{40}(p_{j_1},g_{\delta})\cap B_{40}(p_{j_2},g_{\delta})$ is $C(r_0,Q,n)\delta$-$C^{1,\alpha}$ close to a constant isometric transformation $A_{j_1j_2}$ on $\mathbb R^m$.
	
	Now by Cheng-Yau's gradient estimate \cite{CY1975} for the component harmonic functions of $$u_{p_{j_1},\tau}-A_{j_1j_2}u_{p_{j_2},\tau}+2\max\left|u_{p_{j_1},\tau}-A_{j_1j_2}u_{p_{j_2},\tau}\right|_{B_{30}(x,g_\delta)\subset B_{40}(p_{j_1},g_\delta)\cap B_{40}(p_{j_2},g_\delta)}$$ in the context of uniform lower Ricci curvature bound, it follows that, for $\eta_{j_1}(f_{j_1}(x))\neq 0$ and $\eta_{j_2}(f_{j_2}(x))\neq 0$,
	\begin{equation}\label{C1-close-local-fibration}
	\left|df_{j_1}-df_{j_2}\right|(x)\le C_4(n)\sqrt{\Psi(\epsilon/\tau\,|\,n)}.
	\end{equation}
	(Note that the support of $\eta_{j}$ lies in $B_{20}(p_{j},g_{\delta})$.)
	
 	Since $f_{j}:B_{40}(p_{j},g_{\delta})\to (X,h_\delta)$ is a $\varkappa(\epsilon\,|\,n)$-Riemannian submersion,
 	by $C^{1}$-closeness \eqref{C1-close-local-fibration} for $f_{j}$, the average of $df_j$ in \eqref{fibration-first-diff-error} is a $\varkappa(\epsilon\,|\,n)$-Riemannian submersion. Then \eqref{fibration-first-diff-error} implies \eqref{local-fibration-to-X1}.

	The uniform bound on the 2nd fundamental form of $f$ in \eqref{local-fibration-to-X3} follows from further calculations on the 2nd derivatives of implicit function and the uniform bounds (\ref{thm-bounded-local-covering-geometry}.1.b-c) on $\op{Rm}$ and $\nabla \op{Rm}$ of $h(t_0)$  .
	
	Indeed, the 2nd fundamental form of $f$ with respect to the Euclidean metric $g_{y}$ in the normal coordinates from $T_yX$ can be expressed in matrix by
	\begin{equation}\label{2nd-derivative-expression-*}
	d^2f=-(\partial_yF)^{-1}\cdot \left(\partial_x^2F+\partial_y(\partial_xF)\cdot df\right)-(\partial_yF)^{-1}\cdot\left( \partial_x(\partial_yF)\cdot df+\partial_y^2F\cdot (df)^2\right),
	\end{equation}
	where $\partial_x^2F$ consists of the Hessian of each components of $F$ with respect to $x$ such that
	\begin{equation}\label{2nd-derivative-expression}
	\partial_x^2F=\sum_j\nabla^2 \phi_j(x) \cdot f_j(x)+2d\phi_j(x)\otimes df_j(x)+\phi_j(x)\cdot \nabla^2f_j(x).
	\end{equation}
	
	For the last term of \eqref{2nd-derivative-expression-*}, we note that
	$\partial_yF=\partial^2_yE$ is a combination of
	$\nabla^2 \frac{1}{2}r_{f_j(x)}^2$, which is $\Psi(\delta\,|\,n,t_0)$-close to the identity matrix $E$. So is its inverse $(\partial_yF)^{-1}$.
	For $\partial_x(\partial_yF)$ and $\beta=\frac{1}{2}r^2_{f_j(x)}$, we have
	\begin{align*}
	\nabla_{df_j(X)}\left(\nabla^2\beta(Y,Y)\right)&=df_j(X)\left(YY\beta\right)-df_j(X)\left((\nabla_YY)\beta\right)\\
	&=YY\left<df_j(X),\nabla\beta\right>-(\nabla_YY)df_j(X)\beta.
	\end{align*}
	Observe that, $\left<df_j(X),\nabla\beta\right>=X^k\frac{\partial f_j^t}{\partial x^k}y^sh_{ts}(t_0)$ in the normal coordinates $(y^1,\dots,y^m)$ at $f_j(x)$. It follows that
	$\partial_x(\partial_y^2E)$ admits a uniform upper norm bound $C(n,t_0)$ with respect to $g$ and $h(t_0)$. So are $\partial_y(\partial_xF)$ and $\partial_y^2F$.
	
	At the same time, by \eqref{C0-small-local-fibration} and \eqref{local-fibration-C2} for $\delta=\tau\sqrt{\Psi(\epsilon/\tau\,|\,n)}$, each term in \eqref{2nd-derivative-expression} is bounded by $\delta\cdot C(n,t_0)$. It follows that  $|\nabla^2f|_{g_\delta,h_{\delta,t_0}}\le C(n)\sqrt{\Psi(\epsilon/\tau\,|\,n)}$ with respect to the rescaled metric $g_\delta$ and $h_{\delta,t_0}$.
	
	After rescaling back, we derive the bound of 2nd fundamental form in \eqref{local-fibration-to-X3} with respect to $h(t_0)$. By the $C^{1,\alpha}$-compactness in Theorem \ref{thm-bounded-local-covering-geometry}, it can be seen that after replacing $h(t_0)$ with the original metric $h$, \eqref{local-fibration-to-X3} still holds.
	
	The inequality \eqref{local-fibration-to-X2} follows from the same argument in \cite[proof of (2.6.1) of the fibration theorem 2.6]{CFG1992}. Indeed, up to a rescaling, let us assume that both the harmonic radius of $(X,h)$ and injectivity radius of $(X,h(t_0))$ is $\ge 1$, where $e^{2t_0}\le 1/2$.
	Suppose for some $y\in (X,h)$, the intrinsic diameter $$\diam_{g}(F_{y})\le \mu \epsilon,$$
	where $d_{GH}(M,X)\le \epsilon$.
	By the second fundamental form bound in \eqref{local-fibration-to-X3}, the extrinsic diameter of other fibers over $B_{1/4}(y,h(t_0))$ is no less than $C_6(n)\mu \epsilon$. By \eqref{local-fibration-to-X1}, at least $C_7(n)\epsilon^{-m}\mu$ many of $\epsilon$-balls are required to cover $f^{-1}(B_{1/4}(y,h(t_0)))\subset f^{-1}(B_{1/2}(y,h))$. However, by the existence of $\epsilon$-GHA $\alpha:(M,g)\to (X,h)$, which is $\Psi(\epsilon\,|\,n)$-close to $f$, and the harmonic coordinates on $B_1(y,h)$, at most $C_7(n)\epsilon^{-m}$ such balls are required. Hence $\mu\le C_8(n)$.
	
	The same argument in \cite[Step 3 of the proof of Proposition 6.6]{NaberZhang2016} yields \eqref{local-fibration-to-X4}. Here we give a simple proof by the regularities \eqref{local-fibration-to-X2} and  \eqref{local-fibration-to-X3}.
	
	Recall that by Theorem \ref{thm-smoothing-ricci-flow}, $g$ can be smoothed to $g(t_0)$ by the Ricci flow \cite{DWY1996}, whose sectional curvature is uniformly bounded by $C(n)t_0^{-1/2}$. By the $C^{1,\alpha}$-compactness, lifted to the universal cover of $1$-balls on $M$, the $C^{1,\alpha}$-norm between $\tilde g(t_0)$ and $\tilde g$ is uniformly bounded by $C(t_0,n)$. So is the Levi-Civita connections of $g(t_0)$ and $g$. By the expression of the second fundamental form in terms of Christoffel symbols, it is easy to see that $F_{y}=f^{-1}(y)$ with respect to $g(t_0)$ still satisfies \eqref{local-fibration-to-X3}. Hence, $F_{y}$ with the induced metric by $g(t_0)$ admits a uniform sectional curvature bound $C(t_0,n)$ with a small diameter $C(n)\epsilon$. The Gromov's almost flat theorem \cite{Gromov1978} (cf. also \cite{Ruh1982,Rong2019}) implies $(\ref{local-fibration-to-X4})$.
\end{proof}

\begin{proof}[Proof of Theorem \ref{thm-fibration-of-C1-manifolds}]
	~
	
The only difference from Proposition \ref{prop-C2alpha-fibration} is the $C^\infty$-smoothness of the fibration $f:M\to N$. Note that the $C^{2,\alpha'}$-regularity of $f$ in Proposition \ref{prop-C2alpha-fibration} is due to the limit coordinate chart $H_{\infty,\tau}$ on $B_{100}(\alpha(p),h_\tau)$ in the definition of $f_p$; see the paragraph below \eqref{projection-C2-close}.

Let $g(t)$ be the metric smoothed by Theorem \ref{thm-smoothing-ricci-flow}, and $h(t)$ be the metric on $X$ by the proof of Theorem \ref{thm-bounded-local-covering-geometry}. Let $H_{t,\infty,\tau}$ be the corresponding limit coordinate chart on $X$ with respect to $\tau^{-2}g(t)$ and $\tau^{-2}h(t)$. Then $H_{t,\infty,\tau}$ is $C^\infty$-smooth and $C^{2,\alpha}$-converges to $H_{\infty,\tau}$ as $t\to 0$.

By replacing $H_{\infty,\tau}$ by $H_{t,\infty,\tau}$ in the proof of Proposition \ref{prop-C2alpha-fibration}, we derive Theorem \ref{thm-fibration-of-C1-manifolds}.
\end{proof}

\begin{remark}\label{rem-local-almost-submersion}
	We point out that the local fibration $f_{p,\tau}$ with fixed $\tau=\tau(n,\rho)$ are modeled on the Riemannian submersion $\pi_\infty$ in \eqref{graph-local-cover-convergence} is necessary in guarantee that both \eqref{local-fibration-to-X1} and \eqref{local-fibration-to-X3} hold at the same time.
	
	If, instead of a rescaling of $f_{p,\tau}$, one picks up for each $\delta>0$ a local ``$\delta$-splitting map'' modeled on the Euclidean space as that in \eqref{construction-harmonic-base}, then by Cheeger-Colding's well-known $L^2$-estimates on their gradients and Hessians \cite{CC1997I,CC1996}, it also yields a $\Psi(\delta,\epsilon/\delta\,|\,n)$-almost Riemannian submersion $f_{p,\delta}:B_{100}(p,g_\delta)\to (X,h_\delta)$ for $0<\epsilon\le \delta^2(n)$ (cf. \cite[Proposition 6.6]{NaberZhang2016}), such that
	\begin{equation}\label{local-fibration-close-GHA-**}
	\|f_{p,\delta}-\alpha\|_{C^0(B_{100}(p,g_\delta),(X,h_\delta))}\le  \Psi(\delta,\epsilon/\delta\,|\,n),
	\end{equation}
	and
	\begin{equation}\label{local-fibration-C2-**}
	|\nabla^2f_{p,\delta}|_{g_\delta,h_\delta}\le C(n).
	\end{equation}
	
	But one immediately encounters the following issues:
	\begin{enumerate}
		\item to guarantee the local fibrations and the global fibration glued together are $\varkappa(\epsilon)$-almost Riemannian submersions, $\delta$ has to approach $0$;
		\item after the local fibrations are re-defined locally for each $\delta$, the 2nd derivative \eqref{local-fibration-C2-**} after rescaling back blows up as $\delta\to 0$.
	\end{enumerate}

	 Without a solution of the above, only a rescaling invariant 2nd derivative control on a $\varkappa(\epsilon\,|\,n)$-almost Riemannian submersion can be derive, such as
	 \begin{equation}\label{2nd-derivative-rescaling-invariant}
	 \left|\nabla^2f\right|_{g,h}(x)\cdot \diam_{g}(F_{f(x)})\leq C(n)\epsilon^{1/2}.
	 \end{equation}
\end{remark}

\begin{remark}\label{rem-fibration-smooth}
	There are several well-known methods (e.g., Hamilton's Ricci flow \cite{Hamilton1982} applied in \cite{DWY1996} or Perelman's pseudo-locality \cite{Perelman2002} (cf. \cite{HKRX2020}), embedding to Hilbert space by PDEs \cite{PWY1999} (cf. \cite{Abresch1988})),  by which a collapsed manifold $(M,g)$ with $|\op{Ric}_M|\le n-1$ and $(\delta,r)$-local covering geometry can be smoothed to a nearby metric $g_t$ that admits a uniform bounded sectional curvature depending on $t$. Hence by Theorem \ref{thm-fukaya} a fibration $f_t$ exists such that (\ref{thm-fukaya}.1-4) hold with respect to $g_t$.
	
	In order to remove the perturbing error that arises from $t$, such that the fibration $f_t:(M,g)\to (X,h)$ remains to be a $\varkappa(\epsilon)$-Gromov-Hausdorff approximation as $\epsilon\to 0$, $t$ must go to zero. Moreover, since the sectional curvature of $g_t$ generally blows up as $t\to 0$, some explicit curvature control on $g_t$ (e.g., $|\op{sec}_{g_t}|\le \frac{\alpha}{t}$ in \cite[Theorem 1.6]{HKRX2020} by Perelman's pseudo-locality \cite{Perelman2002}) and an arbitrary small distance distortion (e.g., $\le \alpha\sqrt{t}$ in \cite[Lemma 1.11]{HKRX2020}) on a definite scale are required. For details, see the proof of \cite[Theorem B]{HKRX2020}.
	
	Under the same setting of Theorem \ref{thm-fibration-of-C1-manifolds}, we construct fibrations in \cite{JKX2022} satisfying (\ref{thm-fukaya}.1-2) and (\ref{thm-fukaya}.4) via the smoothed metric $g(t)$ by the Ricci flow method and by suitably choosing the flow time with respect to $d_{GH}(M,X)$, among which \eqref{2nd-derivative-rescaling-invariant} holds as the best regularity for the 2nd order derivative.

	Though \eqref{2nd-derivative-rescaling-invariant} is strictly weaker than (\ref{thm-fukaya}.3), we prove in \cite{JKX2022} that all such fibrations are equivalent to each others as diffeomorphic types. Moreover, they are stable under Lipschitz perturbation on the metric $g$. It justifies the regularity \eqref{2nd-derivative-rescaling-invariant} is also suitable in describing the topology of full-rank collapsing phenomena under bounded Ricci curvature.
\end{remark}

	

\bibliographystyle{plain}
\bibliography{document}
\end{document}